\newtheorem{theorem}{Theorem}[section]
\newtheorem{lemma}[theorem]{Lemma}
\newtheorem{corollary}[theorem]{Corollary}
\theoremstyle{remark}
\newtheorem{remark}[theorem]{Remark}
\theoremstyle{definition}
\newtheorem{assumption}[theorem]{Assumption}
\newtheorem{example}[theorem]{Example}
\newtheorem{definition}[theorem]{Definition}
\def\qed{{\hfill $\Box$ \bigskip}}
\def\loc{{\rm loc}}
\def\XXint#1#2#3{{\setbox0=\hbox{$#1{#2#3}{\int}$}
\vcenter{\hbox{$#2#3$}}\kern-.5\wd0}}
\newcommand\aint{-\hspace{-0.38cm}\int}
\newcommand\cbrk{\text{$]$\kern-.15em$]$}}
\newcommand\opar{\text{\,\raise.2ex\hbox{${\scriptstyle
|}$}\kern-.34em$($}}
\newcommand\cpar{\text{$)$\kern-.34em\raise.2ex\hbox{${\scriptstyle |}$}}\,}
\def\<{\langle}
\def\>{\rangle}
\newcommand\bR{\mathbb{R}}
\newcommand\bS{\mathbb{S}}
\newcommand\bE{\mathbb{E}}
\newcommand\cI{\mathcal{I}}
\newcommand\cA{\mathcal{A}}
\newcommand\cB{\mathcal{B}}
\newcommand\cC{\mathcal{C}}
\newcommand\cF{\mathcal{F}}
\newcommand\cH{\mathcal{H}}
\newcommand\cJ{\mathcal{J}}
\newcommand\cK{\mathcal{K}}
\newcommand\cM{\mathcal{M}}
\newcommand\osc{\text{osc}}
\def\R {{\mathbb R}}
\def\Re{\text{Re}}
\def\Im{\text{Im}}
\begin{document}

\title[An $L_p$-theory for  non-local elliptic equations]
{An $L_p$-theory for a class of  non-local elliptic equations related to   nonsymmetric measurable kernels}

\author{Ildoo Kim}
\address{Department of Mathematics, Korea University, 1 Anam-dong, Sungbuk-gu, Seoul,
136-701, Republic of Korea} \email{waldoo@korea.ac.kr}

\author{Kyeong-Hun Kim}
\address{Department of Mathematics, Korea University, 1 Anam-dong,
Sungbuk-gu, Seoul, 136-701, Republic of Korea}
\email{kyeonghun@korea.ac.kr}
\thanks{The research of the second
author was supported by Basic Science Research Program through the
National Research Foundation of Korea(NRF) funded by the Ministry of
Education, Science and Technology (2013020522)}

\subjclass[2010]{35R09, 47G20}

\keywords{Non-local elliptic equations, Integro-differential equations, L\'evy processes, non-symmetric measurable kernels}

\begin{abstract}
We study the integro-differential operators $L$ with kernels $K(y) = a(y) J(y)$, where $J(y)dy$ is a L\'evy measure on $\bR^d$ (i.e. $\int_{\bR^d}(1\wedge |y|^2)J(y)dy<\infty$) and $a(y)$ is an only measurable function with positive lower and upper bounds.
Under few additional conditions on $J(y)$, we prove the  unique solvability of the equation $Lu-\lambda u=f$ in $L_p$-spaces and present some $L_p$-estimates of the solutions.

\end{abstract}

\maketitle

\section{introduction}
There has been growing interest in the  integro-differential equations related to pure jump processes owing to their applications in  various models in physics, biology, economics, engineering and many others involving long-range jumps and interactions. In this article we  study the non-local elliptic equations having  the operators
$$
Lu := \int_{\R^d}\Big(u(x+y)-u(x)- y \cdot \nabla u(x) \chi (y) \Big)\, K(x,y) dy,
$$
and
$$
\tilde{L}u:=\int_{\R^d}\Big(u(x+y)-u(x)- y \cdot \nabla u(x) 1_{|y|<1} \Big)\, K(x,y) dy,
$$
where the kernel $K(x,y)=a(y)J(y)$ depends only on $y$,
\begin{eqnarray*}
\chi (y) = 0 ~\text{if}~ \sigma \in (0,1), \quad \chi (y) = 1_{|y|< 1} ~\text{if}~ \sigma = 1,\quad \chi (y) = 1 ~\text{if}~ \sigma \in (1,2].
\end{eqnarray*}
The constant $\sigma$  depends on $J(y)$ and  is defined in (\ref{sigma}). In particular, if $J(y)=c(d,\alpha)|y|^{-d-\alpha}$ for some $\alpha\in (0,2)$ then $\sigma=\alpha$. Note  that  if $a(y)$ is symmetric then $\tilde{L}=L$, and in general we (formally) have
$$
\tilde{L}u=Lu+b\cdot \nabla u,
$$
where
$$ b^i=-\int_{B_1}y^i a(y)J(y)dy \quad \text{if}\,\,\sigma\in (0,1), \quad \quad  b^i=\int_{\bR^d\setminus B_1}y^i a(y)J(y)dy \quad \text{if}\,\,\sigma\in (1,2].
$$
The main goal  of this article is to prove the unique solvability of the equations
\begin{eqnarray}
                                          \label{inteq}
Lu - \lambda u = f \quad \text{and} \quad \tilde{L}u - \lambda u = f,\quad \lambda>0
\end{eqnarray}
in appropriate $L_p$-spaces and present some  $L_p$-estimates of the solutions. Here $p>1$.  If $p=2$, the only condition we are assuming is that $a(y)$ has positive lower and upper bounds and   $J(y)$ is  rotationally invariant.
If $p\neq  2$, we  assume some
additional conditions  on $J(y)$, which are described in (\ref{eqn crucial 11}) and (\ref{eqn crucial 12}) below (also see Assumption \ref{assump 10.25.3}).

Below is a short description on related $L_p$-theories. For other results such as  the Harnack inequality and H\"older estimates  we refer the readers to \cite{bass1}, \cite{bass2}, \cite{dkim2}, \cite{KSV8} and \cite{S}.
 If $K(x,y)= c(d,\alpha) |y|^{-d-\alpha}$,  where $\alpha \in (0,2)$ and $c(d,\alpha)$ is some normalization constant, then  $L$ becomes the fractional Laplacian operator $\Delta^{\alpha/2}:=-(-\Delta)^{\alpha/2}$.
 For the fractional Laplacian operator, $L_p$-estimates can be easily  obtained
 by  the Fourier multiplier theory (for instance, \cite{Ste2}).
In  \cite{bb2}  $L_p$-estimates  were obtained for  elliptic equations  with  ``symmetric" kernels, and  an $L_p$-theory for the equation $Lu-\lambda u=f$ with measurable nonsymmetric kernel $K(x,y)= a(y)|y|^{-d-\alpha}$ was  recently introduced in  \cite{dkim}.
 For parabolic equations,  the authors of \cite{MP2} handled the equations with the kernel $K(x,y)=a(x,y)|y|^{-d-\alpha}$ under the condition that the coefficient $a(x,y)$ is  homogeneous of order zero in $y$ and  sufficiently smooth in $y$, but it is allowed that $a$ also depends on $x$.
Lately in \cite{Zh}, an $L_p$-regularity theory for parabolic equations was constructed for  $J(y)$ satisfying
$$
\nu_1^\alpha (B) \leq \int I_B(y)J(y)dy \leq \nu_2^\alpha(B) \quad \forall B \in \cB(\bR^d),  
$$ 
where $\nu_i^{(\alpha)}$ are  L\'evy measures taking the form
\begin{eqnarray}
\label{130913}\nu_i^{(\alpha)}(B) := \int_{\bS^{d-1}} \Big( \int_0^\infty \frac{1_{B}(r\theta)dr}{r^{1+\alpha}} \Big) S_i(d\theta),
\end{eqnarray}
with finite surface measures $dS_i$  on  $\bS^{d-1}$. Since the same constant $\alpha$ is used for both $\nu_1^{(\alpha)}$ and $\nu_2^{(\alpha)}$, even the L\'evy measure $J(y)$ related to the operator $\Delta^{\alpha_1/2}+\Delta^{\alpha_2/2}$ is not of type (\ref{130913}) if $\alpha_1 \neq \alpha_2$.

From the probabilistic point of view, the fractional Laplacian operator can be described as  the infinitesimal generator of $\alpha$-stable processes. That is,   
$$
\Delta^{\alpha/2} f(x) = \lim_{t \to 0^+} \frac{1}{t}\bE [f(x+ X_t)-f(x)], \quad f\in C^{\infty}_0
$$
where $X_t$ is  an  $\bR^d$-valued L\'evy process in a probability space $(\Omega,P)$ with the characteristic function
 $\bE e^{i\lambda \cdot X_t}:=\int_{\Omega}e^{i\lambda \cdot X_t}\,dP=e^{-t|\lambda|^{\alpha}}$. More generally, for any Bernstein function $\phi$ with $\phi(0+)=0$ (equivalently, $\phi(\lambda)=\int^{\infty}_0(1-e^{-\lambda t})\mu(dt)$ for some  measure $\mu$ satisfying $\int^{\infty}_0 (1\wedge 1)\mu(dt)<\infty$), the operator $\phi (\Delta)$ is the infinitesimal generator of the process $X_t:=W_{S_t}$, where  $S_t$ is a subordinator (i.e. an increasing L\'evy process satisfying $S_0=0$)  with Laplace exponent $\phi$ (i.e. $\bE e^{\lambda S_t}=\exp\{t\phi(\lambda)\}$) and  $W_t$ is a $d$-dimensional Brwonian motion independent of $S_t$. Such process is called  the subordinate Brownian motion. Actually $\phi$ is a Bernstein function with $\phi(0+)=0$ if and only if it is a Laplace exponent of a subordinator. Furthermore, the relation
\begin{eqnarray}
                             \label{eqn 10.25.1}
\phi(\Delta) f :=- \phi(-\Delta) f
= \int_{\bR^d} \left(f(x+y) -f(x) -\nabla f (x) \cdot y \chi(y) \right)J(y)~dy
\end{eqnarray}
holds  with $j(|y|):=J(y)$ given by
\begin{eqnarray}
                  \label{jum}
j(r) = \int_0^\infty (4\pi t)^{-d/2} e^{-r^2/(4t)}~\mu(dt).
\end{eqnarray}
For the equations with the kernel $K(x,y) =a(y)J(y)$, an $L_p$-estimate  is obtained in  aforementioned article \cite{bb2} if $a(y)$ is symmetric.
 However to the best of our  knowledge,  if the coefficient $a(y)$ is only measurable and  $J(y) \neq |y|^{-d-\alpha}$ then the $L_p$-estimate has not been known yet. In this article we extend \cite{dkim}  to the class of  L\'evy measures $J(y)$ satisfying  the following two conditions: (i) there exists a constant $\alpha_0$, where $\alpha_0\in (0,1]$ if $\sigma\leq 1$ and $\alpha_0\in (1,2)$ if $\sigma>1$, so that
\begin{equation}
                \label{eqn crucial 11}
\frac{j(t)}{j(s)}\leq N (\frac{s}{t})^{d+\alpha_0}, \quad \forall \, 0<s \leq t,
\end{equation}
and, (ii) for any $t>0$
\begin{equation}
                \label{eqn crucial 12}
1_{\sigma<1}\int_{|y|\leq 1} |y| j(t |y|) ~dy + 1_{\sigma \geq 1}
\int_{|y|\leq 1} |y|^{2 } j(t |y|) ~dz \leq N j(t).
\end{equation}
See Section \ref{s:1} for few remarks on these conditions. 
It is easy to check that   (\ref{eqn crucial 11}) and (\ref{eqn crucial 12}) are satisfied if there  exists $\alpha\geq \alpha_0$ so that
\begin{equation}
                           \label{eqn crucial 13}
 (s/t)^{d+\alpha} j(s)\leq N_1 j(t) \leq N_2 (s/t)^{d+\alpha_0} j(s), \quad \forall \,\,\,0<s\leq t.
\end{equation}
One can  construct many interesting jump functions $j(t)$ satisfying (\ref{eqn crucial 13}).   For example, (\ref{eqn crucial 13}) holds if $J(y)$ is defined from (\ref{eqn 10.25.1}) and  $\phi$ is one of the following  (see Example \ref{exam 1}  for details):
\begin{itemize}
\item[(1)] $\phi(\lambda)=\sum_{i=1}^n\lambda^{\alpha_i}$, $0<\alpha_i<1$;
\item[(2)] $\phi(\lambda)=(\lambda+\lambda^\alpha)^\beta$, $\alpha, \beta\in (0, 1)$;
\item[(3)] $\phi(\lambda)=\lambda^\alpha(\log(1+\lambda))^\beta$, $\alpha\in (0, 1)$,
$\beta\in (0, 1-\alpha)$;
\item[(4)] $\phi(\lambda)=\lambda^\alpha(\log(1+\lambda))^{-\beta}$, $\alpha\in (0, 1)$,
$\beta\in (0, \alpha)$;
\item[(5)] $\phi(\lambda)=(\log(\cosh(\sqrt{\lambda})))^\alpha$, $\alpha\in (0, 1)$;
\item[(6)] $\phi(\lambda)=(\log(\sinh(\sqrt{\lambda}))-\log\sqrt{\lambda})^\alpha$, $\alpha\in (0, 1)$.
\end{itemize}
 In these cases, the jump function $j(r)$ is comparable to $r^{-d}\phi(r^{-2})$.

Our approach is borrowed from \cite{dkim}.
We estimate the sharp functions of the solutions and apply the Hardy-Littlewod  theorem and the Fefferman-Stein theorem. This approach is typically used to treat the second-order PDEs with small BMO or VMO coefficients (for instance, see \cite{Krbook}). In \cite{dkim} this method is applied  to a non-local operator with the kernel $K(x,y)=a(y)|y|^{-d-\alpha}$.
As in \cite{dkim}, our sharp function estimates are based on  some H\"older estimates of solutions. The original idea of obtaining H\"older estimates is   from \cite{bci}. Nonetheless, since we are considering much general $J(y)$ rather then $c(d,\alpha)|y|^{-d-\alpha}$, many new difficulties arise. In particular, our operators do not have the nice scaling property which is used in \cite{Krbook} and \cite{dkim}, and this cause many difficulties  in the estimates.

The article is organized as follows. In Section \ref{s:1}
 we introduce the  main results. Section \ref{s:2} contains the unique solvability in the $L_2$-space. In Section \ref{s:3} we establish some H\"older estimates of solutions.  Using these estimates we obtain the sharp function and maximal function estimates in Section \ref{s:4}. In Section \ref{s:5}, the proofs of main results are given.

We finish the introduction with some notation. As usual $\bR^{d}$ stands for the Euclidean space of points
$x=(x^{1},...,x^{d})$,  $B_r(x) := \{ y\in \bR^d : |x-y| < r\}$  and
$B_r
 :=B_r(0)$.
 For $i=1,...,d$, multi-indices $\beta=(\beta_{1},...,\beta_{d})$,
$\beta_{i}\in\{0,1,2,...\}$, and functions $u(x)$ we set
$$
u_{x^{i}}=\frac{\partial u}{\partial x^{i}}=D_{i}u,\quad
D^{\beta}u=D_{1}^{\beta_{1}}\cdot...\cdot D^{\beta_{d}}_{d}u,
\quad|\beta|=\beta_{1}+...+\beta_{d}.
$$
For an open set $U \subset \bR^d$ and a nonnegative non-integer constant $\gamma$, by $C^{\gamma}(U)$  we denote the usual H\"older space. For a nonnegative integer $n$, we write $u \in C^n(U)$  if $u$ is $n$-times continuously differentiable in $U$. By $C^n_0(U)$ (resp. $C^{\infty}_0(U)$) we denote the set of all functions in $C^n(U)$ (resp.   $C^{\infty}(U)$) with compact supports. Similarly by $C^n_b(U)$ (resp. $C^{\infty}_b(U)$) we denote the set of  functions in $C^n(U)$ (resp. $C^{\infty}(U)$) with bounded derivatives.
The standard $L_p$-space on $U$ with Lebesgue measure is denoted by $L_p(U)$.  We simply use $L_p$, $C^n$, $C_b^n$, $C_0^n$, $C_b^\infty$, and $C_0^\infty$ when $U=\bR^d$.  We use  ``$:=$" to denote a definition.  $a \wedge b = \min \{ a, b\}$ and $a \vee b = \max \{ a, b\}$. If we write $N=N(a, \ldots, z )$,
this means that the constant $N$ depends only on $a, \ldots , z$. The constant $N$ may change  from location to location, even within a line. By $\cF$ and $\cF^{-1}$ we denote the Fourier transform and the inverse Fourier transform, respectively. That is,
$\cF(f)(\xi) := \int_{\bR^{d}} e^{-i x \cdot \xi} f(x) dx$ and $\cF^{-1}(f)(x) := \frac{1}{(2\pi)^d}\int_{\bR^{d}} e^{ i\xi \cdot x} f(\xi) d\xi$.
For a Borel
set $A\subset \bR^d$, we use $|A|$ to denote its Lebesgue
measure and by $I_A(x)$ we denote  the indicator of $A$.

\vspace{1mm}

\section{Setting and main results} \label{s:1}

Throughout this article, we assume that $J(y)$ is rotationally invariant,
\begin{equation}
               \label{elliptic con}
\nu\leq a(y) \leq \Lambda
\end{equation}
 for some constants $\nu, \Lambda>0$, and
\begin{equation}
                    \label{levy measure}
 \int_{\bR^d}(1 \wedge |y|^2) J(y)~dy   <\infty.
\end{equation}
Let $e_1$ be a unit vector.  Obviously, the condition that $J(y)$ is rotationally invariant can be replaced by the condition that  $J(y)$ is comparable to $j(|y|):=J(|y|e_1)$, because $J(y)a(y)=j(|y|) \cdot a(y)J(y)j^{-1}(|y|):=j(|y|) \tilde{a}(y)$ and $\tilde{a}$ also has positive lower and upper bounds.

Denote
\begin{equation}
                         \label{sigma}
\sigma := \inf \{ \delta >0 : \int_{|y|\leq 1}\,|y|^\delta J(y)~dy   <\infty\},
\end{equation}
\begin{eqnarray*}
\chi (y) = 0 ~\text{if}~ \sigma \in (0,1), \quad \chi (y) = 1_{B_1} ~\text{if}~ \sigma = 1,\quad \chi (y) = 1 ~\text{if}~ \sigma \in (1,2].
\end{eqnarray*}
Note that if $J(y)=c(d,\alpha)|y|^{-d-\alpha}$ for some $\alpha\in (0,2)$ then we have $\sigma=\alpha$.

For $u \in C^2_b$ we introduce the non-local elliptic operators
\begin{eqnarray*}
\cA u= \int_{\R^d}\big(u(x+y)-u(x)- y \cdot \nabla u(x) \chi (y) \big)\, J(y)~ dy,
\end{eqnarray*}
\begin{eqnarray*}
Lu
= \int_{\R^d}\big(u(x+y)-u(x)-y \cdot \nabla u(x) \chi (y) \big)\, a(y)J(y)~ dy,
\end{eqnarray*}
$$
\tilde L u = \int_{\R^d}\big(u(x+y)-u(x)-y \cdot \nabla u(x) I_{B_1}(y) \big)\, a(y)J(y)~ dy,
$$
\begin{eqnarray*}
L^\ast u=\int_{\R^d}\big(u(x+y)-u(x)-y \cdot \nabla u(x) \chi (y) \big)\, a(-y)J(-y)~ dy,
\end{eqnarray*}
and
$$
\tilde L^\ast u = \int_{\R^d}\big(u(x+y)-u(x)-y \cdot \nabla u(x) I_{B_1}(y) \big)\, a(-y)J(-y)~ dy.
$$
We start with a simple but interesting result, which will be used later in the proof of Theorem \ref{maint2}.

\begin{lemma}
                    \label{lemma 5.25.1}
For any $p>1$ and $\lambda>0$,
$$
\|u\|_{L_p} \leq \frac{1}{\lambda} \|\tilde{L}u - \lambda u\|_{L_p}, \quad \forall\, u\in C^{\infty}_0.
$$
\end{lemma}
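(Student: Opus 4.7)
The plan is to exploit a probabilistic representation of $u$ via the resolvent of an associated L\'evy process. Since $a(y)$ is bounded above by $\Lambda$ and $J(y)\,dy$ is a L\'evy measure by (\ref{levy measure}), the measure $a(y)J(y)\,dy$ is itself a L\'evy measure on $\bR^d$. Because $\tilde L$ is defined with the canonical truncation $1_{B_1}(y)$ and contains no explicit drift term, the L\'evy--Khintchine theorem identifies $\tilde L$ as the infinitesimal generator of a pure-jump L\'evy process $X_t$, defined on some probability space $(\Omega,\bP)$, with characteristic triple $(0,0,a(y)J(y)\,dy)$.

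Fix $u\in C^\infty_0$ and $\lambda>0$. The next step is to apply It\^o's formula to the semimartingale $t\mapsto e^{-\lambda t}u(x+X_t)$; because $u\in C^2_b$ and the small jumps of $X_t$ are compensated precisely inside $B_1$, this yields
\begin{equation*}
e^{-\lambda t}u(x+X_t)-u(x)=\int_0^t e^{-\lambda s}(\tilde L u-\lambda u)(x+X_s)\,ds + M_t,
\end{equation*}
with $M_t$ a mean-zero martingale (the compact support of $u$ and boundedness of its derivatives, combined with $\int(1\wedge|y|^2)a(y)J(y)\,dy<\infty$, make the relevant quadratic variation a bounded process in $t$, so $M_t$ is a genuine martingale, not merely local). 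Taking expectations and sending $t\to\infty$, the boundary term $\bE[e^{-\lambda t}u(x+X_t)]$ vanishes because $u$ is bounded, leaving the representation
\begin{equation*}
u(x)=\int_0^\infty e^{-\lambda t}\,\bE\bigl[(\lambda u-\tilde L u)(x+X_t)\bigr]\,dt.
\end{equation*}

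For each $t\ge 0$ the map $f\mapsto \bE[f(\cdot+X_t)]$ is convolution of $f$ against the law of $-X_t$, which is a probability measure, and hence is an $L_p$-contraction by Minkowski's integral inequality. Taking $L_p$-norms in the representation and pulling them inside the time integral yields
\begin{equation*}
\|u\|_{L_p}\le \int_0^\infty e^{-\lambda t}\|\tilde L u-\lambda u\|_{L_p}\,dt=\frac{1}{\lambda}\|\tilde L u-\lambda u\|_{L_p},
\end{equation*}
which is the desired inequality. The only conceptual obstacle is ensuring that the truncation $1_{B_1}(y)$ built into $\tilde L$ matches the compensator used in the It\^o--L\'evy decomposition of $X_t$, so that It\^o's formula produces exactly $\tilde L$ with no spurious drift. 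This is precisely why the lemma is stated for $\tilde L$ rather than for $L$: when $\sigma\ne 1$, one has $\tilde L-L=b\cdot\nabla$ for a nonzero constant vector $b$, which would destroy the contraction and force an additional term in the estimate.
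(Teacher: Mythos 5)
Your argument is correct and reaches the paper's conclusion, but by a somewhat different route to the key representation. The paper never touches It\^o's formula: it works on the Fourier side, setting $\Phi(\xi)=-\int(e^{i\xi\cdot y}-1-i(y\cdot\xi)I_{B_1})a(-y)J(-y)\,dy$, noting $\Re\,\Phi\geq 0$, writing $(\Phi(-\xi)+\lambda)^{-1}=\int_0^\infty e^{-t\Phi(-\xi)-\lambda t}\,dt$, and identifying $e^{-t\Phi(-\xi)}$ as the Fourier transform of the law $p_\Phi(t,dx)$ of a L\'evy process (the process is used only to supply these probability measures); this gives $u=-\int_0^\infty e^{-\lambda t}\,p_\Phi(t,\cdot)\ast f\,dt$ and then Young's inequality finishes. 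You derive exactly the same resolvent formula probabilistically, via Dynkin's formula for the process generated by $\tilde L$, and then use the $L_p$-contractivity of convolution with a probability law, which is the same final step. Your route requires a little more care (justifying that the compensated-jump local martingale is a true martingale, which you do correctly since $|u(x+y)-u(x)|^2\leq N(1\wedge|y|^2)$, and passing $t\to\infty$ by dominated convergence using boundedness of $\tilde Lu$), whereas the paper's route requires no stochastic calculus at all and makes the positivity $\Re\,\Phi\geq 0$ do the work; your version is arguably more transparent about why the truncation $1_{B_1}$ in $\tilde L$ matches the L\'evy--Khintchine compensator, and you correctly account for the reflection (convolution against the law of $-X_t$), which is why the paper uses $a(-y)J(-y)$.

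One caveat on your closing remark: it is not true that the drift term would ``destroy the contraction.'' For $\sigma\neq 1$ the operator $L$ differs from $\tilde L$ by a constant drift $b\cdot\nabla$, so $L$ is still the generator of a L\'evy process (the same jumps plus a deterministic translation), its resolvent is still given by convolution with the sub-probability measures $e^{-\lambda t}p(t,dx)$, and the same bound $\|u\|_{L_p}\leq \lambda^{-1}\|Lu-\lambda u\|_{L_p}$ holds. The paper states the lemma for $\tilde L$ simply because that is what is needed in the proof of Theorem \ref{maint2}, not because the estimate fails for $L$. This side remark does not affect the validity of your proof of the stated inequality.
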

\begin{proof}
Put
$$
\Phi (\xi) := -\int_{\bR^d} (e^{i \xi  \cdot y} -1 - i (y \cdot \xi )I_{B_1} ) a(-y)J(-y)~dy
$$
and
$$
f:=\tilde L u -\lambda u.
$$
Since $a(-y)J(-y)$ is a L\'evy measure (i.e. $ \int_{\bR^d}(1 \wedge |y|^2) a(-y)J(-y)~dy   <\infty$), there exists a L\'evy process  whose characteristic exponent is $-t \Phi(\xi)$ (for instance, see Corollary 1.4.6 of \cite{apple}).
Denoting by $p_\Phi(t,dx)$ its law at $t$, we have
\begin{equation}
                    \label{eqn 5.28.1}
\int_{\bR^d} e^{-i \xi \cdot x} p_\Phi(t,dx)=\int_{\bR^d} e^{i (-\xi) \cdot x} p_\Phi(t,dx) =  e^{-t\Phi(-\xi)}.
\end{equation}
In non-probabilistic terminology it can be rephrased that  if  $ \int_{\bR^d}(1 \wedge |y|^2) a(-y)J(-y)~dy   <\infty$ then there exists a  continuous  measure-valued  function $p_{\Phi}(t,dx)$ such that $p_{\Phi}(t,\bR^d)=1$ and  (\ref{eqn 5.28.1}) holds.
Since
$$
(-\Phi(-\xi)-\lambda) \cF u = \cF f
$$
and $\Re \,\Phi(-\xi)\geq 0$, we have
\begin{eqnarray*}
 \cF u (\xi)  &=&  - \frac{1}{\Phi(-\xi)+\lambda }\cF f (\xi) \\
&=&  - \Big( \int_0^\infty    e^{-t\Phi(-\xi) -\lambda t} ~dt ~ \cF f(\xi) \Big)\\
&=&  - \Big( \int_0^\infty    \int_{\bR^d} e^{- i \xi \cdot x} p_\Phi(t,dx) e^{-\lambda t} ~dt ~ \cF f(\xi) \Big)\\
&=&  -\cF\Big(\int_0^\infty (p_\Phi(t , \cdot) \ast f (x) ) e^{-\lambda t}~dt\Big)(\xi).
\end{eqnarray*}
Therefore,
$$
u(x) = -\int_0^\infty (p_\Phi(t , \cdot) \ast f ) e^{-\lambda t}~dt
$$
and by Young's inequality,
\begin{eqnarray*}
\|u\|_{L_p} \leq   \int_0^\infty \int_{\bR^d} p_{\Phi}(t,dx) e^{-\lambda t}~dt \|f\|_{L_p} \leq \frac{1}{\lambda} \|f\|_{L_p}.
\end{eqnarray*}
Hence  the lemma is proved.
\end{proof}

\begin{definition}
We write $u \in \cH_p^\cA$  if and only if there exists a sequence of functions $u_n \in C_0^\infty$ such that   $u_n \to u$ in $L_p$ and $\{\cA u_n:n=1,2,\cdots\}$ is a cauchy sequence in $L_p$.
By $\cA u$ we denote the limit of $\cA u_n$ in $L_p$.
\end{definition}
\begin{lemma}
$\cH_p^\cA$ is a Banach space equipped with the norm
\begin{eqnarray*}
\|u\|_{\cH_p^\cA} :=   \|u\|_{L_p} + \|\cA u\|_{L_p}.
\end{eqnarray*}
\end{lemma}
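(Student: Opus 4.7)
The proof has three ingredients: the map $u \mapsto \cA u$ is well-defined on $\cH_p^\cA$ (independent of the approximating sequence in $C_0^\infty$), $\|\cdot\|_{\cH_p^\cA}$ is a norm, and $\cH_p^\cA$ is complete. Only well-definedness and completeness require an argument, and my plan is to attack them via a duality identity followed by a routine diagonal-sequence argument.

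For well-definedness, I would take $u_n, v_n \in C_0^\infty$ both converging in $L_p$ to the same $u$, with $\cA u_n \to g_1$ and $\cA v_n \to g_2$ in $L_p$. Setting $w_n := u_n - v_n$, we have $w_n \to 0$ in $L_p$ and $\cA w_n \to g_1 - g_2$ in $L_p$, and the goal is $g_1 = g_2$. I would test against an arbitrary $\varphi \in C_0^\infty$: Fubini, integration by parts in the term $y \cdot \nabla w_n(x)\chi(y)$, and the change of variable $y \mapsto -y$ together with the symmetries $J(-y)=J(y)$ and $\chi(-y)=\chi(y)$ should yield the self-adjointness identity
\[
\int (\cA w_n)(x)\,\varphi(x)\,dx = \int w_n(x)\,(\cA\varphi)(x)\,dx.
\]
As soon as $\cA\varphi \in L_q$ with $1/p+1/q=1$, the right-hand side tends to $0$, and passing to the limit gives $\int (g_1 - g_2)\varphi\,dx = 0$ for every $\varphi \in C_0^\infty$, whence $g_1 = g_2$.

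The needed verification is $\cA\varphi \in L_q$ for $\varphi \in C_0^\infty$ with $\mathrm{supp}\,\varphi \subset B_R$. On $B_{2R}$ the function $\cA\varphi$ is bounded (Taylor expansion of $\varphi$ plus $\int(1\wedge|y|^2)J(y)\,dy < \infty$), and for $|x| > 2R$ the vanishings $\varphi(x) = 0 = \nabla\varphi(x)$ give
\[
|\cA\varphi(x)| \leq \|\varphi\|_\infty \int_{|x+y|<R} J(y)\,dy \leq N\, j(|x|/2).
\]
Applying (\ref{eqn crucial 11}) with $s=1$ yields $j(t) \leq N t^{-(d+\alpha_0)} j(1)$ for $t \geq 1$, hence $j(|x|/2) \to 0$; so $j(|x|/2)^q \leq j(|x|/2)$ for $|x|$ large, and integrability at infinity then follows from (\ref{levy measure}). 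Completeness is routine: for a Cauchy sequence $\{u_n\} \subset \cH_p^\cA$ with $L_p$-limits $u = \lim u_n$ and $g = \lim \cA u_n$, I would pick $u_{n,k} \in C_0^\infty$ approximating $u_n$ and $\cA u_n$ simultaneously in $L_p$ and extract a diagonal sequence $\tilde u_n := u_{n,k(n)}$ with $\|\tilde u_n - u_n\|_{L_p} + \|\cA \tilde u_n - \cA u_n\|_{L_p} < 1/n$; then $\tilde u_n \to u$ and $\cA\tilde u_n \to g$ in $L_p$, so $u \in \cH_p^\cA$ with $\cA u = g$ and $u_n \to u$ in $\cH_p^\cA$.

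The main obstacle is the well-definedness step. It forces one to prove the self-adjointness identity on $C_0^\infty$ --- which is the only place where the rotational invariance of $J$ enters --- and to check the $L_q$-integrability of $\cA\varphi$, for which the tail decay of $j$ coming from (\ref{eqn crucial 11}) and (\ref{levy measure}) is essential. Once closedness is in hand, the completeness step is a standard density argument, and linearity and the norm axioms are immediate.
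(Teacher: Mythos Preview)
The paper's own proof is literally ``It is obvious,'' so your proposal supplies far more detail than the paper does. Your overall strategy --- well-definedness via the self-adjointness identity $\int(\cA w)\varphi = \int w(\cA\varphi)$ on $C_0^\infty$, followed by a diagonal argument for completeness --- is correct and is the natural way to fill in what the paper omits.

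One minor correction: your $L_q$-integrability step invokes (\ref{eqn crucial 11}) (i.e.\ hypothesis {\bf H1}), but the lemma is stated \emph{before} {\bf H1} is introduced, under only (\ref{elliptic con}) and (\ref{levy measure}). In fact {\bf H1} is unnecessary here, and your pointwise bound $|\cA\varphi(x)|\le N\,j(|x|/2)$ already presupposes a quasi-monotonicity of $j$ that is not yet assumed. Instead, from $|\cA\varphi(x)|\le \|\varphi\|_\infty\int_{|x+y|<R}J(y)\,dy$ for $|x|>2R$ one gets directly by Fubini
\[
\int_{|x|>2R}|\cA\varphi(x)|\,dx \;\le\; \|\varphi\|_\infty\,|B_R|\int_{|y|>R}J(y)\,dy \;<\;\infty,
\]
so $\cA\varphi\in L_1\cap L_\infty\subset L_q$ for every $q\ge 1$, using only (\ref{levy measure}). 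With this fix your argument goes through verbatim.
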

\begin{proof}
It is obvious.
\end{proof}

\begin{definition}
We say that $u\in \cH_p^\cA$ is a solution of the equation
\begin{eqnarray}\label{maineqn1}
Lu - \lambda u =f \quad\quad  ~\text{in}\,\,~\bR^d
\end{eqnarray}
 if and only if there exists a sequence $\{u_n \in C_0^\infty\}$ such that $u_n$ converges to $u$ in $\cH_p^\cA$ and $Lu_n  -  \lambda u_n$ converges to $f$ in $L_p$.
Similarly, we consider the equation
\begin{eqnarray}        \label{maineqn2}
\tilde Lu - \lambda u =f \quad\quad  ~\text{in}\,\,~\bR^d
\end{eqnarray}
in the same sense.
\end{definition}

\begin{lemma}[Maximum principle] \label{max}
Let $\lambda >0$, $b(x)$ be an $\bR^d$-valued bounded function on $\bR^d$ and $u$ be a function in $C^2_b$ satisfying $u(x) \to 0$ as $|x| \to \infty$. If $Lu+b(x)\cdot \nabla u-\lambda u =0$ in $\bR^d$, then $u \equiv 0$. Also, the same statement is true with $\tilde{L}$ in place of $L$.
\end{lemma}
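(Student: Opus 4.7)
The plan is to run the classical pointwise maximum-principle argument adapted to the non-local operator $L$. Suppose, for contradiction, that $u\not\equiv 0$. Since $u\in C^2_b$ and $u(x)\to 0$ as $|x|\to\infty$, either $\sup u>0$ or $\inf u<0$; by replacing $u$ with $-u$ if necessary (which, by linearity of $L$, still satisfies $L(-u)+b\cdot\nabla(-u)-\lambda(-u)=0$ and still vanishes at infinity), I may assume $M:=\sup_{\bR^d} u>0$. By continuity and the decay of $u$ at infinity, this supremum is attained at some $x_0\in\bR^d$.

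At the interior maximum $x_0$ one has $\nabla u(x_0)=0$, so the compensator term $y\cdot\nabla u(x_0)\,\chi(y)$ appearing in the integrand defining $Lu(x_0)$ vanishes identically in $y$, giving
\begin{equation*}
Lu(x_0)\;=\;\int_{\bR^d}\bigl(u(x_0+y)-u(x_0)\bigr)\,a(y)J(y)\,dy.
\end{equation*}
Each integrand is pointwise $\le 0$, since $u(x_0+y)\le u(x_0)$ and $a,J\ge 0$; its absolute value is bounded by $C\,(1\wedge|y|^2)\,a(y)J(y)$, with $C$ depending on $\|u\|_\infty$ and $\|D^2u\|_\infty$ (the small-$y$ bound uses Taylor's theorem together with $\nabla u(x_0)=0$), so integrability is guaranteed by \eqref{elliptic con} and \eqref{levy measure}. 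Hence $Lu(x_0)\le 0$.

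Evaluating the PDE at $x_0$ and using $\nabla u(x_0)=0$ yields $Lu(x_0)=\lambda u(x_0)=\lambda M>0$, which contradicts $Lu(x_0)\le 0$. Therefore $u\equiv 0$. The same argument applies verbatim with $\tilde L$ in place of $L$: the only property of the truncation actually used is that the gradient-correction term vanishes at any point where $\nabla u=0$, and this holds equally for $\chi(y)$ and for $1_{B_1}(y)$. I do not foresee any genuine obstacle here; the only step requiring even a brief check is the pointwise integrability of $Lu$ at the critical point $x_0$, which is immediate from $u\in C^2_b$ and the L\'evy-measure condition.
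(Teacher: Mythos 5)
Your proof is correct and takes essentially the same approach as the paper's: both locate a positive interior maximum $x_0$ (using the decay of $u$ at infinity), observe that $\nabla u(x_0)=0$ so the compensator term drops out, conclude $Lu(x_0)\le 0$ from the pointwise sign of the integrand, and derive a contradiction with $\lambda u(x_0)>0$. You simply spell out the small-$y$ integrability check a bit more explicitly than the paper does.
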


\begin{proof}
Suppose that $u$ is not identically zero. Without loss of generality, assume $\sup_{\bR^d} u >0$ (otherwise consider $-u$). Since $u$ goes to zero as $|x| \to \infty$, there exists  $x_0 \in \bR^d$ such that $u(x_0) = \sup_{\bR^d} u$. Thus $\nabla u (x_0)=0$ and
\begin{eqnarray*}
Lu(x_0)=\int_{\R^d}\left(u(x_0+y)-u(x_0)- y \cdot \nabla  u(x_0)  \chi (y) \right) a(y)J(y)\, dy\leq 0.
\end{eqnarray*}
Therefore we reach the contradiction. Indeed,
\begin{eqnarray*}
Lu (x_0) +b(x_0)\cdot \nabla u(x_0)-\lambda u(x_0) < 0.
\end{eqnarray*}
The proof for $\tilde{L}$ is almost identical. The lemma is proved.
\end{proof}

This maximum principle yields the denseness of $(L+b \cdot \nabla -\lambda) C_0^\infty$ and  $(\tilde{L}+b \cdot \nabla -\lambda) C_0^\infty$ in $L_p$.

\begin{lemma}\label{exis}
Let $\lambda >0$ and $b \in \bR^d$ be independent of $x$. Then $(L+b\cdot \nabla -\lambda) C_0^\infty:= \{ Lu +b \cdot \nabla u - \lambda u : u \in C_0^\infty\}$ is dense in $L_p$ for any $p \in (1,\infty)$. Also, the same statement holds with $\tilde{L}$ in place of $L$.
\end{lemma}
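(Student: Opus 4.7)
The plan is to use a duality argument: show that if $g \in L_{p'}$ (with $1/p+1/p'=1$) annihilates every element of $(L+b\cdot\nabla-\lambda)C_0^\infty$, then $g\equiv 0$. So suppose $\int g\cdot(Lu+b\cdot\nabla u-\lambda u)\,dx = 0$ for all $u\in C_0^\infty$. A formal integration by parts (Fubini plus the substitution $y\mapsto -y$, together with rotational invariance of $J$ which gives $J(-y)=J(y)$ and $\chi(-y)=\chi(y)$) identifies the adjoint of $L+b\cdot\nabla-\lambda$ as $L^{\ast}-b\cdot\nabla-\lambda$, where $L^\ast$ is the operator already defined in Section \ref{s:1}. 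Hence
\begin{eqnarray*}
L^\ast g - b\cdot\nabla g - \lambda g = 0
\end{eqnarray*}
in the distributional sense.

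Next, I would regularize: fix $\eta\in C_0^\infty$ with $\int\eta=1$, put $\eta_\epsilon(x)=\epsilon^{-d}\eta(x/\epsilon)$, and set $g_\epsilon := g\ast\eta_\epsilon$. Since $L^\ast$ is a convolution operator in the $x$-variable, it commutes with mollification, and so does $\nabla$; therefore $g_\epsilon$ satisfies the same equation pointwise. The point of mollifying is to put ourselves into the hypotheses of the maximum principle, Lemma \ref{max}: clearly $g_\epsilon\in C_b^\infty$ (derivatives of all orders are bounded because $\eta_\epsilon\in C_0^\infty$ and $g\in L_{p'}$), and moreover $g_\epsilon(x)\to 0$ as $|x|\to\infty$. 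The decay follows by H\"older's inequality,
\begin{eqnarray*}
|g_\epsilon(x)| \leq \|\eta_\epsilon\|_{L_p}\,\Big(\int_{B_\epsilon(x)}|g|^{p'}\,dy\Big)^{1/p'},
\end{eqnarray*}
and the right-hand side tends to $0$ as $|x|\to\infty$ because $g\in L_{p'}$ with $p'<\infty$.

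Now apply Lemma \ref{max}. The maximum principle is stated for $L$, but its proof uses only that the kernel is a rotationally invariant L\'evy kernel with $a$ bounded between positive constants, so it applies verbatim to $L^\ast$ (whose kernel is $a(-y)J(-y)$, of exactly the same type). Taking $-b$ in place of $b(x)$ in that lemma, we conclude $g_\epsilon \equiv 0$ for every $\epsilon>0$. Letting $\epsilon\downarrow 0$ and using $g_\epsilon\to g$ in $L_{p'}$, we obtain $g=0$, which by the Hahn--Banach theorem proves the claimed density. The argument for $\tilde L$ is identical: one simply uses $\tilde L^\ast$ and the $\tilde L$-version of the maximum principle stated in Lemma \ref{max}.

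The main obstacle is bookkeeping rather than depth: one must verify that the integration by parts producing the adjoint is rigorous (it is, since $u\in C_0^\infty$ and the integrability of $a(y)J(y)(1\wedge|y|^2)$ legitimizes Fubini on the nonlocal part), and that the mollified equation really holds pointwise after commuting $L^\ast$ with $\ast\eta_\epsilon$; both are standard once one notes that $L^\ast$ acts as a convolution in $x$.
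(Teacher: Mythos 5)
Your proposal is correct and follows essentially the same route as the paper: Hahn--Banach/Riesz duality, identification of the adjoint as $L^{\ast}-b\cdot\nabla-\lambda$ via Fubini and $y\mapsto -y$, convolution of the annihilating functional with a smooth compactly supported function to obtain a $C_b^\infty$ solution of the adjoint equation vanishing at infinity, and the maximum principle (Lemma \ref{max}, applied to $L^{\ast}$ with $-b$) to conclude it vanishes. The only cosmetic difference is that you convolve with a mollifier and pass to the limit $\epsilon\downarrow 0$, whereas the paper convolves with an arbitrary $u\in C_0^\infty$ and concludes $u\ast v=0$ for all such $u$, hence $v=0$ directly.
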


\begin{proof}
Due to the similarity we only prove the first statement.   Suppose that the statement  is false. Then by the  Hahn-Banach theorem and Riesz's representation theorem,
there exists a nonzero $v \in L_{p/(p-1)}$ such that
\begin{eqnarray}
\label{4081}\int_{\bR^d} \left( Lu(x) + b \cdot \nabla u (x)- \lambda u(x) \right) v(x)~dx =0
\end{eqnarray}
for all $u \in C_0^\infty$.

Fixing $y \in \bR^d$, we apply \eqref{4081} with $u( y- \cdot)$. Then, due to Fubini's Theorem,
\begin{eqnarray*}
0
&=&\int_{\bR^d} \left( L^\ast u(y-x) - b \cdot \nabla u (y-x)- \lambda u(y-x) \right) v(x)~dx \\
&=&L^\ast u \ast v(y) - b \cdot ( \nabla u \ast v (y))  - \lambda u \ast v (y) =(L^\ast-b \cdot \nabla -\lambda)(u \ast v)(y).
\end{eqnarray*}
Therefore from the previous lemma, we have $u \ast v  = 0$ for any $u\in C^{\infty}_0$.  Therefore,  $v = 0$ $(a.e.)$ and we have a contradiction.
\end{proof}


\begin{corollary}[Uniqueness] \label{unique}
Let $\lambda >0$. Suppose that there exist $u, v \in \cH^\cA_p$ satisfying
$$
Lu  -\lambda u = 0, \quad \tilde{L}v  -\lambda v = 0.
$$
Then $u =v= 0$.
\end{corollary}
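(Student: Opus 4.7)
The plan is to mimic the convolution / maximum-principle strategy already used in the proof of Lemma \ref{exis}: for each test function $w\in C_0^\infty$, convolve the generalized solution $u$ with $w$ to obtain a classical $C_b^\infty$ solution $\phi:=u\ast w$ of the same equation, apply Lemma \ref{max} to force $\phi\equiv 0$, and finally let $w$ run through a mollifying sequence to recover $u=0$. The proof for $v$ is the same argument with $L$ replaced by $\tilde L$ (and the $\tilde L$-version of Lemma \ref{max}).

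First, by definition of a solution in $\cH_p^{\cA}$, pick $u_n\in C_0^\infty$ with $u_n\to u$ in $L_p$ and $(L-\lambda)u_n\to 0$ in $L_p$. For a fixed $w\in C_0^\infty$, translation invariance of $L$ together with Fubini give the commutation $L(u_n\ast w)=(Lu_n)\ast w$, every double integral being absolutely convergent because $u_n$ and $w$ are smooth and compactly supported. Set $\phi_n:=u_n\ast w$ and $\phi:=u\ast w$. Young's inequality (applied to $w$ and to each derivative $D^\beta w$) then yields
\begin{equation*}
L\phi_n-\lambda\phi_n=(Lu_n-\lambda u_n)\ast w\longrightarrow 0\ \ \text{uniformly on }\bR^d,\qquad \phi_n\longrightarrow \phi\ \text{in }C_b^k\text{ for every }k,
\end{equation*}
so in particular $\phi\in C_b^\infty$.

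Next, a dominated-convergence argument inside the defining integral of $L$ shows $L\phi_n\to L\phi$ uniformly (near $y=0$ the integrand is controlled by $(|y|^2\wedge 1)\,\|D^2(\phi_n-\phi)\|_\infty$, handled by the L\'evy condition on $J$; for large $|y|$ it is controlled by $\|\phi_n-\phi\|_\infty$ and $\|\nabla(\phi_n-\phi)\|_\infty$ against the tail integrability of $J$ that makes $L$ well defined on $C_b^2$). Combined with the previous display this gives $L\phi-\lambda\phi\equiv 0$ pointwise. Moreover, since $u\in L_p$ with $p<\infty$ and $w$ has compact support of some radius $R$, $\phi(x)=\int_{B_R(x)} u(y)w(x-y)\,dy\to 0$ as $|x|\to\infty$ by the absolute continuity of the Lebesgue integral. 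Hence $\phi\in C_b^2$ vanishes at infinity and solves $L\phi-\lambda\phi=0$, so Lemma \ref{max} forces $\phi\equiv 0$. Since this holds for every $w\in C_0^\infty$, taking $w=w_\varepsilon$ a standard mollifier gives $u\ast w_\varepsilon\to u$ in $L_p$, hence $u=0$. The same chain of arguments with $\tilde L$ and its maximum principle yields $v=0$.

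The step I expect to require the most care is the uniform convergence $L\phi_n\to L\phi$, especially when $\sigma\in(1,2]$ (so $\chi\equiv 1$ in the definition of $L$), where the second-order cancellation only controls the near-zero behavior and the large-$|y|$ piece must be handled via the tail integrability of $J$ implicit in having $L$ act on $C_b^2$. All remaining ingredients---the commutation of $L$ with convolution, the applicability of Lemma \ref{max}, and the final mollification---are routine.
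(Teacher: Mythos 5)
Your proof is correct, but it takes a genuinely different route from the paper's. The paper argues by duality: passing to the limit in $\int_{\bR^d}(Lu_n-\lambda u_n)w\,dx=\int_{\bR^d}u_n(L^{\ast}w-\lambda w)\,dx$ gives $\int_{\bR^d}u\,(L^{\ast}w-\lambda w)\,dx=0$ for all $w\in C_0^{\infty}$, and then Lemma \ref{exis}, applied to $L^{\ast}$ (whose kernel $a(-y)J(-y)$ satisfies the same standing assumptions), gives density of $\{L^{\ast}w-\lambda w\}$ in $L_{p/(p-1)}$ and hence $u=0$; the maximum principle enters only indirectly, inside the Hahn--Banach proof of Lemma \ref{exis}. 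You instead apply Lemma \ref{max} directly to the mollifications $u\ast w$, which avoids the adjoint operator and the Hahn--Banach step altogether, at the cost of verifying by hand the commutation $L(u_n\ast w)=(Lu_n)\ast w$ and the uniform convergence $L\phi_n\to L\phi$ --- precisely the kind of operator-level analysis the paper outsources to Lemma \ref{exis}, whose proof uses the same convolution-plus-maximum-principle trick but with $u\in C_0^{\infty}$ convolved against an $L_{p/(p-1)}$ function, so no limit inside the operator is needed. Your caveat about $\int_{|y|>1}|y|J(y)\,dy$ when $\sigma\in(1,2]$ is the right thing to flag, but it is exactly the integrability the paper already assumes implicitly by defining $L$ on $C_b^2$ and stating Lemma \ref{max}, and under {\bf H1} (in force whenever $p\neq 2$) it holds automatically; note only that for $\sigma<1$ there is no gradient compensator, so the small-$|y|$ part of your estimate should be bounded by $|y|\,\|\nabla(\phi_n-\phi)\|_{\infty}$ (integrable near the origin since $\sigma<1$) rather than by $|y|^2\|D^2(\phi_n-\phi)\|_{\infty}$ --- a cosmetic fix, not a gap. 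Both arguments ultimately rest on Lemma \ref{max}; yours is the more self-contained, while the paper's reuses Lemma \ref{exis} and needs only the distributional identity above.
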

\begin{proof}
 By the definition of a solution and the assumption of this corollary, there exists a sequence $\{ u_n  \in C_0^\infty\}$ such that for all $w \in C_0^\infty$
\begin{eqnarray*}
0= \int_{\bR^d} \lim_{n \to \infty} (L u_n  -\lambda u_n) w ~dx = \int_{\bR^d} u (L^\ast w  - \lambda w)~dx.
\end{eqnarray*}
Since $\{ L^\ast w - \lambda w : w \in C_0^\infty\}$  is dense in $L_{p/(p-1)}$ owing to Lemma \ref{exis}, we conclude
$u=0$, and by the same argument  we have $v=0$.
\end{proof}

Here is our $L_2$-theory. We emphasize that  only  (\ref{elliptic con}) and (\ref{levy measure}) are assumed for the $L_2$-theory.
The proof of Theorem \ref{main L_2} is given in Section \ref{s:2}.

\begin{theorem}
                              \label{main L_2}
 Let $\lambda>0$. Then for any $f \in L_2$ there exist unique solutions
$u,v \in \cH_2^\cA$ of equation \eqref{maineqn1} and \eqref{maineqn2} respectively, and for these solutions we have
\begin{eqnarray}
               \label{L21}
\| \cA u \|_{L_2} + \lambda \| u \|_{L_2} \leq N (d, \nu , \Lambda) \|f\|_{L_2},
\end{eqnarray}
\begin{eqnarray}
                           \label{L22}
\| \cA v \|_{L_2} + \lambda \| v \|_{L_2} \leq N (d, \nu , \Lambda) \|f\|_{L_2}.
\end{eqnarray}
\end{theorem}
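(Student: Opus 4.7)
The plan is to prove the a priori estimate
\begin{equation*}
\|\cA u\|_{L_2}+\lambda\|u\|_{L_2}\leq N(d,\nu,\Lambda)\,\|(L-\lambda)u\|_{L_2},\qquad u\in C_0^\infty,
\end{equation*}
and then to combine it with the denseness statement of Lemma \ref{exis}; uniqueness is already supplied by Corollary \ref{unique}. The estimate itself rests on a symmetric/antisymmetric decomposition of the coefficient in the variable $y$. Set $a_s(y):=\tfrac12(a(y)+a(-y))$ and $a_a(y):=\tfrac12(a(y)-a(-y))$; since $J(-y)=J(y)$ by rotational invariance, $a_sJ$ is symmetric and $a_aJ$ is antisymmetric in $y$. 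Split $L=L_s+L_a$ accordingly. In $L_s$ the drift correction cancels by symmetry, leaving
\begin{equation*}
L_su(x)=\tfrac12\int_{\bR^d}\bigl(u(x+y)+u(x-y)-2u(x)\bigr)a_s(y)J(y)\,dy,
\end{equation*}
which has real, non-positive Fourier multiplier $m_s(\xi)=-\int(1-\cos(\xi\cdot y))a_s(y)J(y)\,dy$. A parity check shows that the symbol of $L_a$,
\begin{equation*}
m_a(\xi)=\int_{\bR^d}\bigl(e^{i\xi\cdot y}-1-i\,\xi\cdot y\,\chi(y)\bigr)a_a(y)J(y)\,dy,
\end{equation*}
is purely imaginary, since the even-in-$y$ part $\cos(\xi\cdot y)-1$ of the integrand is annihilated by the antisymmetric weight $a_aJ$. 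Ellipticity \eqref{elliptic con} yields $\nu|m_0(\xi)|\leq|m_s(\xi)|\leq\Lambda|m_0(\xi)|$, where $m_0(\xi):=-\int(1-\cos(\xi\cdot y))J(y)\,dy$ is the symbol of $\cA$.

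Since $m_s-\lambda$ is real and $m_a$ purely imaginary, Plancherel's theorem gives
\begin{equation*}
\|(L-\lambda)u\|_{L_2}^{2}=\|(m_s-\lambda)\widehat u\|_{L_2}^{2}+\|m_a\widehat u\|_{L_2}^{2}\geq\|(|m_s|+\lambda)\widehat u\|_{L_2}^{2},
\end{equation*}
where the inequality uses $m_s\leq 0<\lambda$ to write $(m_s-\lambda)^2=(|m_s|+\lambda)^2$. Together with $|m_s|\geq\nu|m_0|$ this furnishes the desired a priori estimate. With it in hand, Lemma \ref{exis} produces $u_n\in C_0^\infty$ with $(L-\lambda)u_n\to f$ in $L_2$; the estimate forces $\{u_n\}$ and $\{\cA u_n\}$ to be Cauchy in $L_2$, so the limit $u$ lies in $\cH_2^\cA$ and solves \eqref{maineqn1}, and passing to the limit in the estimate for $u_n$ yields \eqref{L21}.

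The equation \eqref{maineqn2} is handled identically: by the discussion following \eqref{inteq}, $\tilde L-L$ is either $0$ (when $\sigma=1$) or the constant-coefficient drift $b\cdot\nabla$, whose Fourier symbol $ib\cdot\xi$ is purely imaginary. Adding this term leaves the real part of the full symbol unchanged, so the same Plancherel calculation gives the analogous a priori estimate for $\tilde L-\lambda$; invoking the $\tilde L$ half of Lemma \ref{exis} then produces $v\in\cH_2^\cA$ satisfying \eqref{L22}. The one place requiring care is the parity computation for $m_a$ together with the well-definedness of its defining integral: the cutoff $\chi(y)$ is precisely what makes $(e^{i\xi\cdot y}-1-i\xi\cdot y\chi(y))a_aJ(y)$ integrable near $y=0$ when $\sigma\geq 1$, and once this is arranged the antisymmetry of $a_aJ$ automatically kills the even contribution. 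Checking that this bookkeeping goes through uniformly in all three regimes of $\chi$ is the only spot where the argument must be tailored to the several cases in the definition of $\sigma$.
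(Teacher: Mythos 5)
Your a priori estimate for $L$ is correct and is essentially the paper's own argument: both proofs run Plancherel and use only the real part $-\int_{\bR^d}(1-\cos(\xi\cdot y))a(y)J(y)\,dy\le-\nu\Psi(\xi)$ of the symbol together with the sign of $\lambda$ (the paper handles the $\lambda$-cross term through the quadratic form $-\int uLu\,dx$, you handle it directly on the symbol side via $(m_s-\lambda)^2=(|m_s|+\lambda)^2$, which is only a cosmetic difference); the symmetric/antisymmetric split of $a$ is just a device for isolating that real part, and the passage from the estimate to existence via the denseness in Lemma \ref{exis} and to uniqueness via Corollary \ref{unique} is exactly how the paper concludes.

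The one step that does not survive scrutiny as written is your treatment of \eqref{maineqn2}: for $\sigma\in(1,2]$ you reduce $\tilde L$ to $L+b\cdot\nabla$ with $b^i=\int_{|y|\ge1}y^i a(y)J(y)\,dy$. In this theorem only \eqref{elliptic con} and \eqref{levy measure} are assumed, and \eqref{levy measure} gives $\int_{|y|\ge1}J(y)\,dy<\infty$ but not $\int_{|y|\ge1}|y|J(y)\,dy<\infty$ (take, e.g., $j(r)\asymp r^{-d-3/2}$ near $0$ and $j(r)\asymp r^{-d-1/2}$ at infinity, so that $\sigma>1$ but $b$ diverges); the paper itself calls the identity $\tilde Lu=Lu+b\cdot\nabla u$ formal. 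The repair is immediate inside your own framework: the symbol $\tilde m(\xi)=\int_{\bR^d}\bigl(e^{i\xi\cdot y}-1-i\,\xi\cdot y\,1_{B_1}(y)\bigr)a(y)J(y)\,dy$ is always well defined under \eqref{levy measure}, its real part coincides with that of the symbol of $L$, and its (finite, purely imaginary) remainder drops out of the lower bound, so you should run the identical Plancherel computation directly on $\tilde L-\lambda$ rather than pass through the drift decomposition; this is what the paper's ``\eqref{40832} is proved similarly'' amounts to.
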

  The issue regarding the continuity of $L$ (or $\tilde{L}$) : $\cH^{\cA}_p \to L_p$ will be discussed later.


\vspace{3mm}
 For the case $p\neq 2$, we consider the following    conditions on $J(y)=j(|y|)$ :
\vspace{1mm}

({\bf H1}): There exist constants $\kappa_1>0$ and $\alpha_0 >0$ such that
\begin{eqnarray}
                              \label{ma}
j(t) \leq \kappa_1 (s/t)^{d+\alpha_0} j(s), \quad \forall \,\,0<s\leq t.
\end{eqnarray}
Moreover, $\alpha_0 \leq 1 $ if $\sigma \leq 1$ and  $1<\alpha_0 < 2$ if $\sigma > 1$.

\vspace{1mm}

({\bf H2}): There exists a constant $\kappa_2>0$  such that for all $ t > 0$,

\begin{eqnarray}
                 \label{ma2}
\int_{|y|\leq 1} |y| j(t |y|) ~dy \leq \kappa_2 j(t) \quad \quad \text{if}\,\, \sigma\in (0,1),
\end{eqnarray}
\begin{eqnarray}
                   \label{ma3}
\int_{|y|\leq 1} |y|^{2 } j(t |y|) ~dz \leq \kappa_2 j(t) \quad \quad \text{if}\,\, \sigma\geq 1.
\end{eqnarray}


\begin{remark}      \label{h1remark}
(i) By taking $t=1$ in (\ref{ma}),
\begin{equation}
                  \label{eqn 5.22.3}
j(1)\kappa^{-1}_1 s^{-d-\alpha_0}\leq j(s), \quad \forall\,\, s\in (0,1).
\end{equation}
An upper bound of $j(s)$ near $s=0$ is obtained in the following lemma.

(ii) {\bf{H1}} and {\bf{H2}} are needed even to guarantee the continuity of the operator $L: \cH^{A}_2 \to L_2$ (see Lemma \ref{l2}).


\end{remark}

\begin{lemma}\label{aslemma}

Suppose
\begin{eqnarray}   \label{601-1}
j(s)\geq C j(t), \quad \forall\, s\leq  t,
\end{eqnarray}
and {\bf H2} hold. Then
there exists a constant $N(d,\kappa_2,C)>0$  such that for all $0<s\leq t$
\begin{eqnarray}
                  \label{eqn to}
j(t) \geq N (s/t)^{d+1} j(s) \quad  (\text{if}\,\,\sigma < 1), \quad
j(t) \geq N (s/t)^{d+2} j(s) \quad  (\text{if}\,\,\sigma \geq 1).
\end{eqnarray}
 On the other hand, if there exists $\alpha>0$ so that   $\alpha<1$ if $\sigma<1$,  $\alpha<2$ if $\sigma\geq 1$, and
\begin{eqnarray}
                                              \label{as2}
j(t) \geq N (s/t)^{d+\alpha} j(s), \quad \forall\, 0<s\leq t,
\end{eqnarray}
then {\bf H2} holds.
\end{lemma}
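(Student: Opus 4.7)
The plan is to use H2 as a quantitative upper bound on a weighted integral of $j$, and extract the desired polynomial bound by restricting the integration region to $|z| \leq s$, where the essential monotonicity (\ref{601-1}) turns $j$ into a constant.

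For the first part (the $\sigma < 1$ case), I would start from the inequality (\ref{ma2}) and perform the substitution $z = ty$, which gives
\begin{equation*}
t^{-d-1}\int_{|z|\leq t}|z|\,j(|z|)\,dz \;\leq\; \kappa_2\, j(t).
\end{equation*}
For any fixed $s$ with $0 < s \leq t$, I would drop the integrand outside the smaller ball $\{|z|\leq s\}$ and apply (\ref{601-1}) there to replace $j(|z|)$ by $C\,j(s)$, obtaining
\begin{equation*}
C\,j(s)\,t^{-d-1}\int_{|z|\leq s}|z|\,dz \;\leq\; \kappa_2\, j(t).
\end{equation*}
The inner integral is an explicit multiple of $s^{d+1}$, which yields $j(t)\geq N(s/t)^{d+1}j(s)$ for a constant $N$ depending only on $d$, $\kappa_2$, $C$. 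The $\sigma\geq 1$ case is verbatim the same argument applied to (\ref{ma3}), the extra $|z|$ factor producing $s^{d+2}$ and hence the exponent $d+2$.

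For the converse, assume (\ref{as2}), which rearranges to $j(s) \leq N^{-1}(t/s)^{d+\alpha}j(t)$ for all $0 < s \leq t$. Setting $s = t|y|$ with $|y|\leq 1$ gives the pointwise control $j(t|y|) \leq N^{-1}|y|^{-d-\alpha}j(t)$. Plugging this into the left side of (\ref{ma2}) (for $\sigma<1$) and passing to polar coordinates leaves
\begin{equation*}
\int_{|y|\leq 1}|y|\,j(t|y|)\,dy \;\leq\; N^{-1}\,j(t)\,\omega_{d-1}\int_0^1 r^{-\alpha}\,dr,
\end{equation*}
which is finite precisely because $\alpha < 1$. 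The same computation for (\ref{ma3}) in the $\sigma\geq 1$ case produces $\int_0^1 r^{1-\alpha}\,dr$, finite because $\alpha < 2$. In both cases the dependence on $t$ on the right is exactly $j(t)$, so H2 holds with a suitable $\kappa_2$.

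There is no genuine obstacle here; the only subtlety is choosing the integration window correctly. One must resist integrating over an annulus $s\leq |z|\leq t$ (where $j$ has no useful lower bound), and instead use $\{|z|\leq s\}$, where (\ref{601-1}) gives the clean bound $j(|z|)\geq C\,j(s)$. The converse direction hinges only on checking that the exponent $\alpha$ in (\ref{as2}) is strictly below the critical value ($1$ or $2$) so that the resulting radial integral over $(0,1)$ is convergent, which is exactly the hypothesis imposed.
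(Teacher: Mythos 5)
Your proof is correct and takes essentially the same approach as the paper: in the forward direction you bound the integral in {\bf H2} from below on a region where \eqref{601-1} turns $j$ into a constant multiple of $j(s)$, and in the converse you insert the pointwise bound $j(t|y|)\leq N^{-1}|y|^{-d-\alpha}j(t)$ obtained from \eqref{as2} with $s=t|y|$ and check convergence of the radial integral. The only cosmetic difference is that you restrict directly to $\{|z|\leq s\}$ rather than using the paper's dyadic annuli $B_{2^{-n}}\setminus B_{2^{-(n+1)}}$, which slightly streamlines but does not change the argument.
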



\begin{remark}
By Lemma \ref{aslemma}, both {\bf{H1}} and {\bf{H2}} hold if $0<\alpha_0\leq \alpha$ and
$$
N^{-1} (s/t)^{d+\alpha} j(s)\leq  j(t) \leq N (s/t)^{d+\alpha_0} j(s), \quad \forall \,\,\,0<s\leq t.
$$

\end{remark}

\begin{example}
                 \label{exam 1}
  Let  $J(y)=j(|y|)$ be defined as in (\ref{jum}), that is for a Bernstein function $\phi(\lambda )=\int_{\bR} (1-e^{-\lambda t})\mu(dt)$ and $u\in C^2_0
  $,
$$
 j(r) = \int_0^\infty (4\pi t)^{-d/2} e^{-r^2/(4t)}~\mu(dt),
 $$
 and
\begin{eqnarray*}
\phi(\Delta)u &=& \int_{\bR^d} \left(u(x+y) -u(x) -\nabla u (x) \cdot y I_{|y|\leq 1} \right)J(y)~dy\\
&=&-\cF(\phi(|\xi|^2)\cF(u)(\xi)).
\end{eqnarray*}
Then, {\bf{H1}} and {\bf{H2}} are satisfied if
  $\phi$ is given, for instance, by any one of
\begin{itemize}
\item[(1)] $\phi(\lambda)=\sum_{i=1}^n\lambda^{\alpha_i}$, $0<\alpha_i<1$;
\item[(2)] $\phi(\lambda)=(\lambda+\lambda^\alpha)^\beta$, $\alpha, \beta\in (0, 1)$;
\item[(3)] $\phi(\lambda)=\lambda^\alpha(\log(1+\lambda))^\beta$, $\alpha\in (0, 1)$,
$\beta\in (0, 1-\alpha)$;
\item[(4)] $\phi(\lambda)=\lambda^\alpha(\log(1+\lambda))^{-\beta}$, $\alpha\in (0, 1)$,
$\beta\in (0, \alpha)$;
\item[(5)] $\phi(\lambda)=(\log(\cosh(\sqrt{\lambda})))^\alpha$, $\alpha\in (0, 1)$;
\item[(6)] $\phi(\lambda)=(\log(\sinh(\sqrt{\lambda}))-\log\sqrt{\lambda})^\alpha$, $\alpha\in (0, 1)$.
\end{itemize}
This is because all these functions satisfy the conditions
\begin{itemize}
\item[{\bf{A}}:] \, $\exists \, 0 < \delta_1 \leq \delta_2 <1$,
$$
N^{-1}\lambda^{\delta_1} \phi (t) \leq  \phi (\lambda t) \leq N \lambda^{\delta_2} \phi(t), \quad \forall\,\lambda \geq 1, t \geq 1
$$
\item[{\bf{B}}:] \, $\exists\, 0 < \delta_3 \leq \delta_4 < 1$,
$$
N^{-1} \lambda^{\delta_3} \phi (t) \leq \phi (\lambda t) \leq N \lambda^{\delta_4} \phi(t), \quad \forall\, \lambda \leq 1, t \leq 1,
$$
\end{itemize}
and under these condition one can prove (see \cite{KSV8})
\begin{eqnarray*}
N^{-1} \Big( \frac{R}{r} \Big)^{\delta_1 \wedge \delta_3} \leq  \frac{\phi(R)}{\phi(r)}  \leq N \Big( \frac{R}{r} \Big)^{\delta_2 \vee \delta_4}
\end{eqnarray*}
and
\begin{eqnarray}
                            \label{jumpestimate}
N^{-1}\phi(|y|^{-2}) |y|^{-d} \leq J(y) \leq N\phi(|y|^{-2}) |y|^{-d},
\end{eqnarray}
and consequently our conditions  {\bf{H1}} and {\bf{H2}} hold. One can easily construct concrete examples of $j(r)$ using (\ref{jumpestimate}) and $(1)$-$(6)$ (just replace $\lambda$ by $r^{-2}$).  See  the tables at the end of \cite{SSV} for more examples satisfying {\bf{A}} and {\bf{B}}.

\end{example}


\begin{remark}
If $p\neq 2$, our $L_p$-theory  does not cover the case when the jump function $J(y)$ is related to the relativistic $\alpha$-stable process with mass $m>0$ (i.e. a subordinate Brownian motion  with the infinitesimal generator $\phi(\Delta)=m-(m^{2/{\alpha}}-\Delta)^{\alpha/2}$). This is because
the related jump function decreases exponentially  fast at the infinity (for instance, see \cite{chen}) and thus condition {\bf{H2}} fails (see (\ref{eqn to})).
\end{remark}

\vspace{5mm}

{\bf{Proof of Lemma \ref{aslemma}}}.
Assume \eqref{601-1} and {\bf H2} hold. We put $B_1=\cup_{n=0}^\infty B_{(n)}$, where $B_{(n)} = B_{2^{-n}} \setminus B_{2^{-(n+1)}}$.
Due to \eqref{601-1} for each $n\geq 0$,
\begin{eqnarray*}
\kappa_2 j(t)
\notag &\geq& \int_{|y| \leq 1} |y|^2 j( t |y|) ~dy
= \sum_{n=0}^\infty \int_{B(n)} |y|^2 j(t|y|)~dy   \\
&\geq& N\sum_{n=0}^\infty2^{-(n+1)(d+2)}j( t2^{-n})
\geq N2^{-(n+1)(d+2)}j( t2^{-n}).
\end{eqnarray*}
Put $s=t\lambda $, where  $\lambda \in (0,1)$, and take an
integer $m(\lambda)\geq 0$ such that $2^{-(m+1)} \leq \lambda \leq 2^{-m}$.
Then by \eqref{601-1},
\begin{eqnarray*}
 j( t)
\geq N 2^{-(m+2)(d+2)} j(2^{-(m+1)} t)
\geq N \lambda^{d+2} j(\lambda t).
\end{eqnarray*}
Similarly, $j(\lambda t) \leq \lambda^{-d-1} j(t)$ if $\sigma <1$.

For the other direction, put $s = t|y|$ in \eqref{as2}.  If $\sigma < 1$ then
\begin{eqnarray*}
\int_{|y| \leq 1}  |y| j(t |y|) ~dy
&\leq& Nj(t)\int_{|y| \leq 1}  |y| \frac{j(t |y|)}{j(t)} ~dy\\
&\leq& N j(t) \int_{|y| \leq 1}  |y|^{-d-\alpha_1+1} ~dy \leq Nj(t)
\end{eqnarray*}
 and otherwise, that is, if $\sigma \geq 1$ then
\begin{eqnarray*}
\int_{|y| \leq 1}  |y|^2 j(t |y|) ~dy
&\leq& Nj(t)\int_{|y| \leq 1}  |y|^2 \frac{j(t |y|)}{j(t)} ~dy\\
&\leq& N j(t) \int_{|y| \leq 1}  |y|^{-d-\alpha_2+2} ~dy \leq Nj(t).
\end{eqnarray*}
The lemma is proved. \qed

\vspace{5mm}

Define
$$
\Psi (\xi) := -\int_{\bR^d} (e^{i \xi  \cdot y} -1 - i (y \cdot \xi )\chi(y)) J(y) dy= \int_{\bR^d} (1- \cos \xi \cdot y)  J(y) dy.
$$
Then
$$
\cA u= \cF^{-1} ( - \Psi(\xi) \cF u),\quad \quad \forall\, u\in C^{\infty}_0.
$$
By abusing the notation, we also use $\Psi(|\xi|)$ instead of $\Psi(\xi)$ because $\Psi(\xi)$ is rotationally invariant.

The following result will be used to prove the continuity of the operator $L$.
\begin{lemma} \label{h3}
Suppose that \eqref{601-1} holds. Then
there exists a constant $N(d,C)>0$ such that for all $\xi \in \bR^d$
\begin{eqnarray} \label{4251}
j(|\xi|) \leq N |\xi|^{-d}  \Psi(|\xi|^{-1}).
\end{eqnarray}
\end{lemma}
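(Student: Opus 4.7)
The inequality to prove is equivalent (after the substitution $r=|\xi|^{-1}$, using that $\Psi$ and $j$ are rotationally invariant) to showing
$$
\Psi(r) \geq N^{-1}\, r^{-d}\, j(r^{-1}) \quad \text{for all } r>0,
$$
for some constant $N=N(d,C)$. The plan is to bound $\Psi(r)$ from below by restricting the defining integral to the small ball $B_{1/r}$, where the cosine can be Taylor-expanded, and then to invoke the almost-decreasing hypothesis \eqref{601-1} to replace $j(|y|)$ on that ball by $j(1/r)$.

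First, by rotational invariance of $J$ I may write $\xi=r\, e_1$, so that
$$
\Psi(r) = \int_{\bR^d}(1-\cos(r y_1))\, j(|y|)\, dy.
$$
Since $1-\cos\theta\geq \theta^2/4$ for $|\theta|\leq 1$, and since $|r y_1|\leq r|y|\leq 1$ on $B_{1/r}$, I restrict integration to $B_{1/r}$ and obtain
$$
\Psi(r) \geq \frac{r^2}{4}\int_{B_{1/r}} y_1^2\, j(|y|)\, dy = \frac{r^2}{4d}\int_{B_{1/r}}|y|^2\, j(|y|)\, dy,
$$
where the last equality uses the rotational symmetry of $B_{1/r}$ to replace $y_1^2$ by $|y|^2/d$.

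Next, by the standing hypothesis \eqref{601-1}, $j(|y|)\geq C\, j(1/r)$ whenever $|y|\leq 1/r$. Pulling this out of the integral and computing $\int_{B_{1/r}}|y|^2\, dy = N(d)\, r^{-(d+2)}$ yields
$$
\Psi(r) \geq \frac{C\, r^2}{4d}\, j(1/r)\int_{B_{1/r}}|y|^2\, dy = N(d,C)\, r^{-d}\, j(1/r),
$$
which, rewriting $r=|\xi|^{-1}$, is exactly \eqref{4251}.

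There is no real obstacle here beyond bookkeeping: the only substantive ingredients are the localization $|y|\leq 1/r$ that trivializes the cosine, and the almost-monotonicity \eqref{601-1} that allows $j(|y|)$ to be replaced by $j(1/r)$ on that ball. I would expect the main place to be careful is to justify the restriction of the integration domain (using only $1-\cos\theta\geq 0$) and to keep track of the dependence of the final constant on $d$ and $C$ alone.
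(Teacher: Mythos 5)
Your proof is correct and takes essentially the same route as the paper's: restrict the integral defining $\Psi(|\xi|^{-1})$ to the ball where the argument of the cosine is at most $1$ and use \eqref{601-1} to replace $j(|y|)$ by $j(|\xi|)$ there. The paper performs the scaling change of variables $y\mapsto y/|\xi|$ first and bounds $\int_{|y|\leq 1}(1-\cos y^1)\,dy$ below by a dimensional constant instead of invoking the quadratic lower bound $1-\cos\theta\geq \theta^2/4$, but this is only a bookkeeping difference.
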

\begin{proof}
By  \eqref{601-1},
\begin{eqnarray*}
\Psi ( |\xi|^{-1})
&=& \int_{\bR^d} (1- \cos( y^1/|\xi|))  J(y) ~dy
= |\xi|^d\int_{\bR^d} (1- \cos( y^1))  J(|\xi|y) ~dy\\
&\geq& |\xi|^d\int_{|y| \leq 1} (1- \cos( y^1))  J(|\xi|y) dy \\
&\geq& C j(|\xi|)|\xi|^d\int_{|y|\leq 1} (1- \cos( y^1))   ~dy
\geq  N j(|\xi|)|\xi|^d.
\end{eqnarray*}
Hence the lemma is proved.
\end{proof}

The following condition will be considered for the case $\sigma=1$.  This condition is needed even  to prove the continuity of $L$.

\begin{assumption}
                               \label{cancallation}
If $\sigma =1$ then
\begin{eqnarray}\label{cancel}
\int_{\partial B_r} y^i a(y) J(y) dS_r(y) =0, \quad \forall r \in (0,\infty),\, i=1,\cdots, d,
\end{eqnarray}
where $dS_r$ is the surface measure on $\partial B_r$.
\end{assumption}


Here is our $L_p$-theory for equation (\ref{eqn Lp1}) below.

\begin{theorem}
\label{maint}
Suppose that  {\bf H1} and {\bf H2} hold and  Assumption \ref{cancallation} also holds if $\sigma=1$. Let $\lambda > 0$ and $p>1$. Then for any $f \in L_p$ there exists a unique solution
$u \in \cH_p^\cA$ of the equation
\begin{equation}
                      \label{eqn Lp1}
Lu-\lambda u =f,
\end{equation}
 and for this solution we have
\begin{eqnarray}
\label{mainin}\| \cA u \|_{L_p} + \lambda \| u \|_{L_p} \leq N (d, p,\nu , \Lambda, J) \|f\|_{L_p}.
\end{eqnarray}
Moreover,  $L$ is a continuous operator from $\cH_p^\cA$ to $L_p$, and
\eqref{mainin} holds for all $u \in \cH_p^\cA$ with $f:=Lu - \lambda u$.
\end{theorem}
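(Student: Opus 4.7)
The plan is to follow the sharp-function / maximal-function scheme of \cite{dkim}. Uniqueness in $\cH_p^\cA$ is already supplied by Corollary \ref{unique}, so the whole content of the theorem is concentrated in the a priori estimate \eqref{mainin} for $u \in C_0^\infty$ with $f := Lu - \lambda u$. Once that is established, existence for arbitrary $f \in L_p$ follows by approximating $f$ by elements $f_n = L u_n - \lambda u_n$ coming from Lemma \ref{exis}: the a priori estimate makes both $\{u_n\}$ and $\{\cA u_n\}$ Cauchy in $L_p$, so the limit lies in $\cH_p^\cA$ and is a solution in the sense of the above definition. The continuity of $L\colon\cH_p^\cA\to L_p$ comes for free by writing $L=\cA+(L-\cA)$ and observing that $L-\cA$ is of the same type as $L$ but with coefficient $a-1$; a multiplier-type argument based on Lemma \ref{h3} and conditions \textbf{H1}--\textbf{H2} yields $\|(L-\cA)v\|_{L_p}\leq N\|\cA v\|_{L_p}$ for $v\in C_0^\infty$, hence $Lu_n$ is Cauchy in $L_p$ whenever $\cA u_n$ is, and \eqref{mainin} extends to all of $\cH_p^\cA$ by passage to the limit.

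The a priori bound naturally splits into a bound for $\cA u$ and a bound for $\lambda u$. The $\lambda\|u\|_{L_p}$-piece is the easy half: using the relation $\tilde L=L+b\cdot\nabla$ between the two operators (with $b=0$ when $\sigma=1$ thanks to Assumption \ref{cancallation}), one rewrites $\tilde L u-\lambda u=f+b\cdot\nabla u$, applies Lemma \ref{lemma 5.25.1}, and absorbs the drift contribution via interpolation between $\cA u$ and $u$ (possible because \textbf{H1} identifies the order of $\cA$ as strictly greater than $1$ when $\sigma>1$, and the drift is easily handled when $\sigma<1$ via the direct analog of Lemma \ref{lemma 5.25.1} applied to $L$ itself, whose symbol still has nonnegative real part).

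The heart of the matter is the $\|\cA u\|_{L_p}$-bound. Here I would prove, for some $q\in(1,p)$, every $x_0\in\bR^d$ and every $r>0$, a sharp-function estimate
\begin{equation*}
\dashint_{B_r(x_0)} \bigl|\cA u(x)-(\cA u)_{B_r(x_0)}\bigr|\,dx \leq N\bigl(M(|f|^q)(x_0)\bigr)^{1/q}+\varepsilon(r)\bigl(M(|\cA u|^q)(x_0)\bigr)^{1/q},
\end{equation*}
where $M$ is the Hardy--Littlewood maximal function and $\varepsilon(r)$ is a factor that can be made small by an appropriate choice of parameters; the Fefferman--Stein sharp-function theorem together with the Hardy--Littlewood maximal theorem then yield $\|\cA u\|_{L_p}\leq N\|f\|_{L_p}$. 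The sharp-function estimate itself is produced by the familiar freezing/localization argument: on each ball $B_r(x_0)$ decompose $u=v+w$, where $v$ solves the homogeneous equation $Lv-\lambda v=0$ on a slightly enlarged ball (with appropriate non-local exterior data) and $w$ absorbs the forcing and the resulting non-local boundary error. The remainder $w$ is treated via the $L_2$-theory of Theorem \ref{main L_2} and H\"older's inequality, producing the $M(|f|^q)$ term, while $v$ is controlled by a H\"older-regularity estimate of the form
\begin{equation*}
[\cA v]_{C^\gamma(B_{r/2}(x_0))} \leq N r^{-\gamma}\bigl(\|\cA v\|_{L_\infty}+\|v\|_{L_\infty}\bigr),
\end{equation*}
which I would derive by adapting the barrier-plus-iteration argument of Bass--Caffarelli--Iyer \cite{bci}. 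Condition \eqref{cancel} is exactly what is needed, when $\sigma=1$, to cancel the first-order term that would otherwise appear and spoil the H\"older estimate.

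The principal obstacle is the absence of a clean homogeneity. Under a rescaling $u(\cdot)\mapsto u(r\,\cdot)$ the operator $L$ does not return to itself modulo a scalar, as it did for $K(x,y)=c|y|^{-d-\alpha}$ in \cite{dkim}; instead its kernel acquires the profile $j(r|\cdot|)/j(r)$. Conditions \textbf{H1}--\textbf{H2} are specifically designed so that this profile is uniformly dominated by a $(s/t)^{d+\alpha_0}$-type bound, but it will take careful bookkeeping to propagate those ratios through each of the freezing, rescaling, and H\"older-estimate steps so that every constant comes out genuinely independent of the scale $r$ and depends only on $d,p,\nu,\Lambda$, and the constants appearing in \textbf{H1}--\textbf{H2}. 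That bookkeeping, which was trivial in \cite{dkim} due to exact scale invariance, will be the bulk of the new technical work.
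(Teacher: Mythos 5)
Your skeleton (reduce to the a priori bound for $u\in C_0^\infty$, freeze/localize on balls, treat the local part by the $L_2$-theory of Theorem \ref{main L_2}, control the homogeneous part by a BCI-type H\"older estimate, conclude via Fefferman--Stein and Hardy--Littlewood) is indeed the paper's scheme, but three steps as you describe them would fail. First, the continuity of $L\colon \cH_p^\cA\to L_p$ does not ``come for free'': writing $L=\cA+(L-\cA)$ and invoking a multiplier-type argument for $L-\cA$ cannot work for $p\neq2$, because the symbol of $L-\cA$ involves the merely measurable, non-symmetric coefficient $a(y)-1$; the size bound $|m(\xi)|\le N\Psi(\xi)$ (which is all that Lemma \ref{h3} and {\bf H1}--{\bf H2} give, and which suffices only for $p=2$ via Plancherel) is far from a Mikhlin--H\"ormander condition, and the required derivative bounds on the symbol are not available without smoothness of $a$ --- this lack of multiplier structure is precisely the point of the paper. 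The paper instead proves $\|Lu\|_{L_p}\le N\|\cA u\|_{L_p}$ by running the sharp-function machinery a second time with the roles of $\cA$ and $L$ exchanged (Lemma \ref{le14}: solve $\cA u-\lambda u=f$ and estimate the mean oscillation of $Lu$). Second, your route to the $\lambda\|u\|_{L_p}$ bound for $\sigma>1$ (Lemma \ref{lemma 5.25.1} applied to $\tilde L=L+b\cdot\nabla$, then absorbing $|b|\,\|\nabla u\|_{L_p}$ by ``interpolation between $\cA u$ and $u$'') presupposes exactly the inequality $\|\nabla u\|_p\le\varepsilon\|\cA u\|_p+N(\varepsilon)\|u\|_p$, which is assumption {\bf H3}(i) and is \emph{not} a hypothesis of Theorem \ref{maint}; it does not follow from {\bf H1} for $p\neq2$ (pointwise symbol domination $\Psi(\xi)\gtrsim|\xi|^{\alpha_0}$ gives it only in $L_2$), which is why the paper states it as a separate assumption with sufficient conditions in Lemma \ref{lemma 5.25.5}. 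The paper avoids the drift entirely: the mean-oscillation estimate of Lemma \ref{le13} controls $\lambda(|u-(u)_{B_r}|)_{B_r}$ and $(|\cA u-(\cA u)_{B_r}|)_{B_r}$ simultaneously, so $\lambda\|u\|_{L_p}$ comes out of the same Fefferman--Stein application.

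Third, your sharp-function estimate with $\bigl(\cM(|f|^q)\bigr)^{1/q}$ for some $q\in(1,p)$ cannot be produced the way you describe: the local piece $w$ is estimated through the $L_2$-theory, which forces $q=2$, i.e. the term $\kappa^{d/2}\bigl(\cM(f^2)\bigr)^{1/2}$ as in Lemma \ref{le13}. Consequently the Fefferman--Stein/maximal-function argument only closes for $p>2$, and the range $1<p<2$ requires an additional idea; in the paper this is a duality argument using the adjoint operator $L^\ast$ (whose kernel $a(-y)J(-y)$ satisfies the same hypotheses) and the already-proved estimate for the conjugate exponent $q=p/(p-1)>2$. This step is missing from your proposal. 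The remaining ingredients you sketch (the cut-off decomposition $u=v+w$ with $Lw-\lambda w=\eta f$, the H\"older estimate for the homogeneous part via the Barles--Chasseigne--Imbert maximum argument, the role of \eqref{cancel} when $\sigma=1$, and the careful bookkeeping of the ratios $j(r|\cdot|)/j(r)$ through {\bf H1}--{\bf H2} in place of exact scaling) do match the paper's Theorem \ref{holder}, Corollary \ref{cor1}, Lemma \ref{holderle} and Corollary \ref{cor22}; note only that the paper's H\"older estimates are phrased with the weighted tail norm $\|u\|_{L_1(\bR^d,w_R)}$ and dyadic averages rather than $\|v\|_{L_\infty}+\|\cA v\|_{L_\infty}$, and that the estimate for $[\cA v]_{C^\alpha}$ is obtained by applying $\cA$ to the equation (using translation invariance, so $(L-\lambda)\cA v=\cA\bigl((1-\eta)f\bigr)$) together with a tail estimate of $\cA f$ off the support of $f$.
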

The proof of this theorem will be given in Section \ref{s:5}.

\begin{remark}
Since the constant $N$ in \eqref{mainin} does not depend on $\lambda$, for any $u \in \cH_p^\cA$
\begin{eqnarray*}
\| \cA u \|_{L_p}  \leq N \|Lu\|_{L_p}.
\end{eqnarray*}
\end{remark}


 To study the equations  with the operator $\tilde{L}$, we consider an additional condition, which always holds when $\sigma=1$.
\vspace{2mm}

\begin{assumption}[{\bf H3}]
            \label{assump 10.25.3}
 Any one of the following (i)-(iv) holds:
\vspace{3mm}

(i)  $\cA$ is a higher order differential operator than $I_{\sigma \neq 1} \nabla u$, that is for any $\varepsilon>0$ there exists $N(\varepsilon)>0$ so that for any $u\in C^{\infty}_0$
\begin{equation}
                           \label{eqn 5.20.1}
   I_{\sigma \neq 1} \|\nabla u\|_p\leq \varepsilon \|\cA u\|_p+N(\varepsilon)\|u\|_p.
 \end{equation}

 (ii) $\sigma <1$ and
\begin{equation}
                                    \label{eqn 5.16.1}
\int_{r\leq |y|\leq 1}y^i\Big(a(y)-[a(y)\wedge a(-y)]\Big)\,J(y)dy=0, \quad \forall  \,r\in (0,1),\, i=1,\cdots,d.
\end{equation}

(iii) $\sigma < 1$ and  there exists a constant $\kappa_3>0$  such that for all $ 0<t<1$,
\begin{eqnarray}
                 \label{ma4}
\int_{|z| \geq 1} |z| j(t |z|) ~dz \leq \kappa_2 j(t).
\end{eqnarray}
\end{assumption}

(iv) $\sigma >1$ and
\begin{equation}
                                    \label{eqn 5.25.1}
\int_{1\leq |y|\leq r}y^i\Big(a(y)-[a(y)\wedge a(-y)]\Big)\,J(y)dy=0, \quad \forall  \,r>1\,\, i=1,\cdots,d.
\end{equation}

\vspace{3mm}

\begin{remark}

(i) Note that (\ref{eqn 5.20.1}) is satisfied if for some $\alpha>1$,
\begin{equation}
                  \label{eqn 5.23.1}
\|\Delta^{\alpha/2}u\|_p\leq N(\|u\|_p+\|\cA u\|_p), \quad \forall u\in C^{\infty}_0,
\end{equation}
or, equivalently $|\xi|^{\alpha}(1+\Psi(\xi))^{-1}$ is a $L_p$-Fourier multiplier. Thus, certain differentiability of $J(y)$ is required (see Lemma \ref{lemma 5.25.5} below).

(ii) It is easy to check that (\ref{ma4}) holds if for a $\alpha >1 $,
\begin{eqnarray}            \label{h3r}
 j(\lambda t) \leq N \lambda^{-d-\alpha} j(t), \quad \forall \,\,\lambda \in (1,\infty), \,\, 0<t<1.
\end{eqnarray}

(iii) Obviously, (\ref{eqn 5.16.1}) holds if $a(y)=a(-y)$ for $|y|\leq 1$, and (\ref{eqn 5.25.1}) holds if $a(y)=a(-y)$ for $|y|\geq 1$.
\end{remark}


Below we give  a  sufficient condition for (\ref{eqn 5.20.1}).

\begin{lemma}
                 \label{lemma 5.25.5}
(i) {\bf{H3}}-(i) holds if  $\cA=\phi(\Delta)$ for some Bernstein function $\phi$ satisfying
 \begin{equation}
               \label{eqn 5.22.1}
 1+\phi(|\xi|^2)\geq N|\xi|^{\alpha}, \quad \forall \xi\in \bR^d,
 \end{equation}
 where $\alpha>1$ and $N>0$.

 (ii) All of {\bf{H1, H2}} and {\bf{H3}} hold if $\sigma>1$, $\cA=\phi(\Delta)$ and $\phi$ satisfies conditions {\bf{A}} and {\bf{B}} described in Example \ref{exam 1}.


\end{lemma}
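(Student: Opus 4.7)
For part (i), the strategy is to reduce to a Fourier multiplier estimate. By Remark (i) following Assumption \ref{assump 10.25.3}, the desired \eqref{eqn 5.20.1} follows once we establish \eqref{eqn 5.23.1}, i.e.\ $\|\Delta^{\alpha/2}u\|_p \leq N(\|u\|_p + \|\cA u\|_p)$ for some $\alpha>1$. Since $\cA=\phi(\Delta)$ has Fourier symbol $-\phi(|\xi|^2)$, this is equivalent to showing that
\[
m(\xi) := \frac{|\xi|^\alpha}{1+\phi(|\xi|^2)}
\]
is an $L_p$-Fourier multiplier, which I would verify via the Mikhlin--H\"ormander theorem. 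Boundedness $|m(\xi)|\leq N^{-1}$ is exactly the content of hypothesis \eqref{eqn 5.22.1}. For the derivative bounds $|\xi|^{|\beta|}|D^\beta m(\xi)|\leq C$ required by Mikhlin, the key input is the Bernstein-function scaling estimate
\[
\lambda^k|\phi^{(k)}(\lambda)| \leq C_k\,\phi(\lambda), \qquad k\geq 1,
\]
which follows from the L\'evy representation $\phi(\lambda)=\int_0^\infty(1-e^{-\lambda t})\mu(dt)$ together with the pointwise inequality $(\lambda t)^k e^{-\lambda t}\leq C_k(1-e^{-\lambda t})$. The chain rule then yields $|\xi|^{|\beta|}|D^\beta\phi(|\xi|^2)|\leq C_\beta(1+\phi(|\xi|^2))$, from which matching bounds on derivatives of $(1+\phi(|\xi|^2))^{-1}$ follow; a Leibniz expansion with $|\xi|^\alpha$ and a final application of \eqref{eqn 5.22.1} close the Mikhlin check.

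For part (ii), conditions \textbf{A} and \textbf{B} give the comparison \eqref{jumpestimate}, and the computation in Example \ref{exam 1} already delivers \textbf{H1} and \textbf{H2}. It therefore remains to verify \textbf{H3}, for which I would invoke \textbf{H3}-(i) via part (i) of the present lemma. The task reduces to establishing the hypothesis $1+\phi(|\xi|^2)\geq N|\xi|^\alpha$ for some $\alpha>1$. For $|\xi|\leq 1$ this is immediate from $1+\phi(|\xi|^2)\geq 1$. For $|\xi|\geq 1$ the lower bound in condition \textbf{A} applied with $t=1$ yields $\phi(|\xi|^2)\geq N^{-1}|\xi|^{2\delta_1}\phi(1)$. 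The crucial observation is that under $\sigma>1$ the admissible lower scaling exponent $\delta_1$ in \textbf{A} may be taken strictly greater than $1/2$: using the definition \eqref{sigma} together with \eqref{jumpestimate}, the divergence $\int_{|y|\leq 1}|y|J(y)\,dy=\infty$ forces the dominant growth exponent of $\phi$ at infinity to exceed $1/2$, and the doubling structure built into \textbf{A} then transfers this information to an admissible $\delta_1>1/2$. Setting $\alpha:=2\delta_1$ completes the verification.

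The main obstacle lies in part (ii), specifically in tracing the strict inequality $\sigma>1$ through the definition \eqref{sigma} and the two-sided comparison \eqref{jumpestimate} to a pointwise asymptotic lower bound $\phi(\lambda)\geq N\lambda^{1/2+\varepsilon}$ compatible with the scaling structure of \textbf{A}. In part (i), by contrast, the Mikhlin verification is essentially bookkeeping once the Bernstein-function scaling estimate is in hand; the conceptual content is the choice of symbol $m(\xi)$ and the iterated chain rule for $\phi(|\xi|^2)$.
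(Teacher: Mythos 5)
Your part (i) is correct and is essentially the paper's own argument: the same multiplier $\eta(\xi)=|\xi|^{\alpha}(1+\phi(|\xi|^2))^{-1}$, the same derivative bound $\lambda^{n}|D^{n}\phi(\lambda)|\leq N\phi(\lambda)$ coming from $t^{n}e^{-t}\leq N(1-e^{-t})$ in the L\'evy representation, the Mikhlin--H\"ormander theorem (Theorem IV.3.2 of Stein), and then the interpolation of $\|\nabla u\|_{p}$ against $\|\Delta^{\alpha/2}u\|_{p}$.

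In part (ii), however, there is a genuine gap at exactly the step you flag. You try to deduce that the lower scaling exponent $\delta_1$ in condition \textbf{A} can be taken strictly greater than $1/2$ from the divergence $\int_{|y|\leq 1}|y|J(y)\,dy=\infty$. With $J(y)\asymp |y|^{-d}\phi(|y|^{-2})$ this divergence reads $\int_0^1\phi(r^{-2})\,dr=\infty$; inserting the \emph{upper} bound of \textbf{A}, $\phi(r^{-2})\leq N r^{-2\delta_2}\phi(1)$ for $r\leq 1$, divergence only forces $\delta_2\geq 1/2$. It says nothing about $\delta_1$: condition \textbf{A} allows $\delta_1$ to be much smaller than $\delta_2$, and neither the ``doubling structure'' of \textbf{A} nor $\sigma>1$ transfers the bound from the upper to the lower exponent (in general one only has $2\delta_1\leq\sigma\leq 2\delta_2$). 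So the claimed implication ``$\sigma>1\Rightarrow$ admissible $\delta_1>1/2$'' is a non sequitur, and with it your verification of \eqref{eqn 5.22.1} collapses.

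The paper closes this step by a different and shorter route, using an ingredient you already granted when you cited Example \ref{exam 1}: since \textbf{H1} holds and $\sigma>1$, its exponent satisfies $\alpha_0\in(1,2)$. Taking $t=1$ in \eqref{ma} gives \eqref{eqn 5.22.3}, i.e.\ $j(s)\geq N s^{-d-\alpha_0}$ for $s\in(0,1)$, and the upper bound in \eqref{jumpestimate} gives $\phi(|y|^{-2})\geq N^{-1}|y|^{d}j(|y|)$; hence for $|\xi|\geq 1$,
\begin{equation*}
\phi(|\xi|^2)\;\geq\; N|\xi|^{-d}\,j(1/|\xi|)\;\geq\; N|\xi|^{\alpha_0},
\end{equation*}
so \eqref{eqn 5.22.1} holds with $\alpha=\alpha_0>1$ and part (i) applies. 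In other words, the polynomial lower bound on $\phi$ at infinity should be extracted from the lower bound on $j$ built into \textbf{H1} (via \eqref{eqn 5.22.3}) combined with \eqref{jumpestimate}, not from a strengthened $\delta_1$ in condition \textbf{A}.
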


\begin{proof}
(i). Let $\phi(\lambda)=\int_{\bR}(1-e^{-\lambda t})\mu(dt)$, where $\int_{\bR}(1\wedge |t|) \mu(dt)<\infty$. Then from $t^n e^{-t}\leq N(n) (1-e^{-t})$, we get
\begin{equation}
                        \label{eqn 5.22}
|\lambda|^n |D^n \phi(\lambda)|\leq N \phi(\lambda).
\end{equation}
 For any $u\in C^{\infty}_0$,
$$
\cA u =\cF^{-1}(\phi(|\xi|^2) \cF(u)(\xi)),
$$
$$
 \Delta^{\alpha/2}u=\cF^{-1}( |\xi|^{\alpha} \cF(u)(\xi))
=\cF^{-1}( \eta(\xi) (1+\phi(|\xi|^2) \cF(u)(\xi)),
$$
where $\eta(\xi)=|\xi|^{\alpha}(1+\phi(|\xi|^2))^{-1}$. Using  (\ref{eqn 5.22.1}) and (\ref{eqn 5.22}), one can easily check
$$
|D^n \eta(\xi)|\leq N(n)|\xi|^{-n}, \quad \forall\, \xi,
$$
and therefore $\eta$ is a Fourier multiplier (see Theorem IV.3.2 of \cite{Ste2}) and
$$
\|\Delta^{\alpha/2}u\|\leq N(\|u\|_p+\|\cA u\|_p),
$$
$$
\|\nabla u\|_p\leq \varepsilon  \|\Delta^{\alpha/2}u\|_p+N(\varepsilon)\|u\|_p\leq N \varepsilon \|\cA u\|_p+N\|u\|_p.
$$

(ii) If  {\bf{A}} and {\bf{B}} hold, then as explained before both {\bf{H1, H2}} hold, and we also have (see (\ref{jumpestimate})),
$$
N^{-1}\phi(|y|^{-2}) |y|^{-d} \leq J(y) \leq N\phi(|y|^{-2}) |y|^{-d}.
$$
Thus if $|\xi|\geq 1$, then
$$
\phi(|\xi|^2)\geq N |\xi|^{-d} J(|\xi|^{-1})\geq N|\xi|^{\alpha_0},
$$
where (\ref{eqn 5.22.3}) is used for the last inequality. Hence the lemma is proved.
\end{proof}

Here is our $L_p$-theory for equation (\ref{eqn Lp2}) below.

\begin{theorem}
                                            \label{maint2}
Suppose that {\bf H1}, {\bf H2} and {\bf H3} hold  and Assumption \ref{cancallation} also holds if $\sigma=1$. Let $\lambda > 0$ and $p>1$. Then for any $f \in L_p$ there exists a unique solution
$u \in \cH_p^\cA$ of the equation
\begin{equation}
                       \label{eqn Lp2}
\tilde{L}u-\lambda u=f,
\end{equation}
 and for this solution we have
\begin{eqnarray}    \label{mainieq2}
\| \cA u \|_{L_p} + \lambda \| u \|_{L_p} \leq N (d, \nu , \Lambda, \lambda, J) \|f\|_{L_p}.
\end{eqnarray}
\end{theorem}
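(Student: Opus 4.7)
The plan is to reduce Theorem \ref{maint2} to Theorem \ref{maint} by exploiting the relation between $\tilde L$ and $L$, dispatching the four alternatives in Assumption \ref{assump 10.25.3} one by one. Uniqueness comes from Corollary \ref{unique}, so I focus on the a priori bound (\ref{mainieq2}) and existence.

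When $\sigma = 1$ the two compensators coincide ($\chi = I_{B_1}$), so $\tilde L = L$ and Theorem \ref{maint} applies verbatim. For $\sigma \neq 1$ I would write $\tilde L u = L u + b\cdot \nabla u$ with the constant vector
$$
b^i = -1_{\sigma<1}\int_{B_1} y^i a(y) J(y)\, dy + 1_{\sigma>1}\int_{\bR^d \setminus B_1} y^i a(y) J(y)\, dy.
$$
The integrand is integrable near zero because $\sigma<1$ (so $|y|\,J(y)$ is integrable on $B_1$), and integrable at infinity by {\bf H1}, which forces $j(t)\lesssim t^{-d-\alpha_0}$ with $\alpha_0>1$; thus $b$ is finite in both regimes.

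Under H3(ii) (resp.\ H3(iv)) I would prove $b=0$. Decomposing $a(y) = \bar a(y) + g(y)$ with $\bar a(y) = a(y)\wedge a(-y)$ symmetric in $y$ and $g(y)\geq 0$, the integral of $y^i \bar a(y) J(y)$ vanishes by pairing $y \leftrightarrow -y$ (absolute integrability justifies the cancellation), while the integral of $y^i g(y) J(y)$ vanishes by passing to the limit $r \to 0$ in (\ref{eqn 5.16.1}) (resp.\ $r \to \infty$ in (\ref{eqn 5.25.1})) together with dominated convergence. In either subcase $\tilde L = L$ and Theorem \ref{maint} closes the argument. Under H3(i), Theorem \ref{maint} yields
$$
\|\cA u\|_{L_p}+\lambda\|u\|_{L_p}\leq N\|Lu-\lambda u\|_{L_p}\leq N\|\tilde L u-\lambda u\|_{L_p}+N|b|\,\|\nabla u\|_{L_p},
$$
and inserting (\ref{eqn 5.20.1}) with $\varepsilon$ small enough to make $N|b|\varepsilon \leq 1/2$ absorbs $\|\cA u\|_{L_p}$; the residual $\|u\|_{L_p}$ contribution is then controlled by Lemma \ref{lemma 5.25.1}, producing the $\lambda$-dependent constant allowed in the statement.

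Under H3(iii) the drift $b$ need not vanish, but the tail bound (\ref{ma4}) ensures $\int_{|y|\geq 1}|y|J(y)\,dy<\infty$, and I would re-run the proof of Theorem \ref{maint} directly for $\tilde L$: the H\"older estimates in Section \ref{s:3} and the sharp-function estimates in Section \ref{s:4} have to be adapted to the cut-off $I_{B_1}$, with (\ref{ma4}) providing exactly the integrability needed to bound the additional drift contributions in every geometric step. Existence in all subcases then follows by combining the a priori bound with the denseness of $(\tilde L-\lambda)C_0^\infty$ in $L_p$ from Lemma \ref{exis}. The most delicate step I expect is H3(iii), because it cannot be reduced to a drift-perturbation of Theorem \ref{maint} and requires revisiting the whole chain of H\"older and sharp-function estimates; H3(i) is also somewhat subtle since the $\varepsilon$-absorption must be balanced against the $\lambda^{-1}$ factor produced by Lemma \ref{lemma 5.25.1}.
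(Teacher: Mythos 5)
Your proposal is correct, and for two of the four alternatives it takes a genuinely different (and cleaner) route than the paper. For {\bf H3}(i) you do exactly what the paper does: write $\tilde L u = Lu + b\cdot\nabla u$ with $b$ finite (by $\sigma<1$ near the origin, by {\bf H1} with $\alpha_0>1$ at infinity), apply the estimate of Theorem \ref{maint}, absorb $\|\nabla u\|_{L_p}$ via \eqref{eqn 5.20.1} with $\varepsilon$ depending only on $N|b|$, and control the leftover $N(\varepsilon)\|u\|_{L_p}$ by Lemma \ref{lemma 5.25.1}, which is precisely where the $\lambda$-dependence of the constant in \eqref{mainieq2} enters; your worry about ``balancing'' $\varepsilon$ against $\lambda^{-1}$ is unfounded since $\varepsilon$ is chosen independently of $\lambda$. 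Where you diverge is {\bf H3}(ii) and {\bf H3}(iv): you observe that the symmetric part $a(y)\wedge a(-y)$ contributes zero to $b$ by oddness (the integrals are absolutely convergent, using $\sigma<1$ resp.\ {\bf H1} with $\alpha_0>1$, and $J(-y)=J(y)$), while the nonsymmetric part contributes zero by letting $r\to 0$ in \eqref{eqn 5.16.1} resp.\ $r\to\infty$ in \eqref{eqn 5.25.1} with dominated convergence, so $b=0$, $\tilde L=L$ on $C_0^\infty$, and Theorem \ref{maint} applies verbatim with a $\lambda$-independent constant. The paper instead treats (ii)--(iv) uniformly by rerunning the sharp-function scheme for $\tilde L$, relying on the ``(ii)'' parts of Theorem \ref{holder}, Corollary \ref{cor1}, Lemma \ref{holderle}, Corollary \ref{cor22} and Lemma \ref{le13}; your shortcut shows that for (ii) and (iv) that machinery is not really needed, at the cost of using the annular cancellation only in the limit rather than at every radius. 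For {\bf H3}(iii) you end up proposing the paper's own route (no drift-reduction is possible, so the whole chain of H\"older and oscillation estimates must accommodate the cut-off $I_{B_1}$, with \eqref{ma4} controlling the gradient terms on $\{|z|\ge r_1/2\}$); note that this adaptation is already recorded in the cited lemmas' statements for $\tilde u$, so you may simply invoke them together with Lemma \ref{exis} and Corollary \ref{unique} rather than redo the estimates.
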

The proof of this theorem will be given in  Section \ref{s:5}. Actually the constant $N$ in (\ref{mainieq2}) is independent of $\lambda$ except the case when {\bf{H3}}(i) is assumed.



%

\section{$L_2$-theory}
                         \label{s:2}
In this section we prove \eqref{L21} and \eqref{L22}. These estimates and Lemma \ref{exis} yield the  unique solvability of equations \eqref{maineqn1} and \eqref{maineqn2}.
The Fourier transform and Parseval's identity are used to prove these estimates.

\begin{lemma}
                                     \label{l2}
Let $\lambda \geq 0$ be a constant.

(i) For any $u \in C_0^\infty$
\begin{eqnarray}
\label{4083}\| \cA u \|_{L_2} + \lambda \| u \|_{L_2} \leq N (d, \nu) \|Lu - \lambda u \|_{L_2}
\end{eqnarray}
and
\begin{eqnarray}
                                      \label{40832}
\| \cA u \|_{L_2} + \lambda \| u \|_{L_2} \leq N (d, \nu) \|\tilde Lu - \lambda u \|_{L_2}.
\end{eqnarray}

(ii) Let {\bf{H1}} hold and $\sigma > 1$. Then both $L$ and $\tilde{L}$ are continuous operators from $\cH_2^\cA$ to $L_2$, and for any
$u\in C^{\infty}_0$,
\begin{equation}
                                \label{4093}
\|Lu\|_{L_2}\leq N\|\cA u\|_{L_2}, \quad \quad \|\tilde{L}u\|_{L_2}\leq N\|u\|_{\cH_2^{\cA}},
\end{equation}
where $N=N(d,\nu,J)$. Moreover, \eqref{4083} and \eqref{40832} hold for any $u \in \cH_2^\cA$.

(iii) Let  {\bf{H1}} and {\bf{H2}} hold, and   Assumption \ref{cancallation} also hold if $\sigma=1$. Then the claims of (ii) hold for $L$ (not for $\tilde{L}$) for any $\sigma\in (0,1]$.
\end{lemma}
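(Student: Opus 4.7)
The plan is to take the Fourier transform and exploit the sign structure of the resulting symbol. Write $a(y)=a_s(y)+a_a(y)$ with $a_s(y):=\tfrac12(a(y)+a(-y))$ and $a_a(y):=\tfrac12(a(y)-a(-y))$, and decompose $L=L_s+L_a$ (analogously for $\tilde L$). Because $J$ is rotationally invariant, $a_sJ$ is even and $a_aJ$ is odd, so for $u\in C_0^\infty$ a direct computation gives
$$\cF(Lu)(\xi)=\bigl(-\Psi_s(\xi)+m_a(\xi)\bigr)\cF u(\xi),$$
where $\Psi_s(\xi):=\int(1-\cos\xi\cdot y)\,a_s(y)J(y)\,dy$ is \emph{real} and satisfies $\nu\Psi(\xi)\le\Psi_s(\xi)\le\Lambda\Psi(\xi)$ by \eqref{elliptic con}, while $m_a(\xi):=i\int[\sin(\xi\cdot y)-\xi\cdot y\,\chi(y)]\,a_a(y)J(y)\,dy$ is \emph{purely imaginary}, since its integrand (after pulling out the $i$) is an even function of $y$ as a product of two odd factors. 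The same structure holds for $\tilde L$ with $\chi$ replaced by $1_{B_1}$.

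For part (i) the symbol of $L-\lambda$ has real part $-(\Psi_s(\xi)+\lambda)$ and purely imaginary part $\mathrm{Im}\,m_a(\xi)$, so
$$|{-}\Psi_s(\xi)-\lambda+m_a(\xi)|^2\;\ge\;(\Psi_s(\xi)+\lambda)^2\;\ge\;\nu^2\,\Psi(\xi)^2+\lambda^2.$$
Multiplying by $|\cF u(\xi)|^2$, integrating, and invoking Plancherel yields \eqref{4083}; the identical argument with $\chi\mapsto1_{B_1}$ gives \eqref{40832}.

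For parts (ii) and (iii) the crux is the pointwise symbol bound $|m_a(\xi)|\le N\Psi(\xi)$. Once established, Plancherel gives $\|Lu\|_{L_2}\le N\|\cA u\|_{L_2}$, which extends $L\colon\cH_2^\cA\to L_2$ continuously via Cauchy sequences, and \eqref{4083} then propagates to all of $\cH_2^\cA$ by density. To obtain the bound I split the defining integral of $m_a$ at $|y|=|\xi|^{-1}$ and change variables $z=|\xi|y$. On the inner region $\{|y|\le|\xi|^{-1}\}$ I use the Taylor-type bounds $|\sin t-t\chi(t)|\le C|t|^2$ (when $\sigma\ge 1$) or $|\sin t|\le|t|$ (when $\sigma<1$) together with \textbf{H2} to obtain $N|\xi|^{-d}j(|\xi|^{-1})$. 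On the outer region $\{|y|>|\xi|^{-1}\}$ I use $|\sin|\le 1$ (and a linear bound on the drift subtraction when $\sigma>1$) together with the \textbf{H1} decay $j(|\xi|^{-1}|z|)\le N|z|^{-d-\alpha_0}j(|\xi|^{-1})$; the required integral $\int_{|z|>1}|z|^{1-d-\alpha_0}dz$ converges precisely because $\alpha_0>1$ whenever $\sigma>1$, again giving $N|\xi|^{-d}j(|\xi|^{-1})$. Finally Lemma~\ref{h3}---applicable because \textbf{H1} implies $j(s)\ge\kappa_1^{-1}j(t)$ for $s\le t$---converts $|\xi|^{-d}j(|\xi|^{-1})$ into $N\Psi(\xi)$.

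The main obstacle is the case $\sigma=1$ in (iii), where the drift $\int\xi\cdot y\,1_{B_1}(y)a_a(y)J(y)\,dy$ is only conditionally convergent and the naïve linear bound on the annulus separating $\{\chi=1\}$ from $\{|y|=|\xi|^{-1}\}$ produces a $\log|\xi|$ loss. Here I invoke Assumption~\ref{cancallation}: integrating the spherical identity $\int_{\partial B_r}y\,a(y)J(y)\,dS_r=0$ in $r$ across that bad annulus gives $\int\xi\cdot y\,a_a(y)J(y)\,dy=0$ over it, so on this region $\sin(\xi\cdot y)$ may be replaced by $\sin(\xi\cdot y)-\xi\cdot y=O(|\xi\cdot y|^3)$, recovering an estimate controlled by \textbf{H2}. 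For $\tilde L$ in (ii), the identity $\tilde Lu=Lu-b\cdot\nabla u$ with the constant vector $b=\int_{B_1}y\,a(y)J(y)\,dy$ (when $\sigma>1$) reduces matters to $L$ plus the gradient estimate $\|\nabla u\|_{L_2}\le N\|u\|_{\cH_2^\cA}$; the latter follows from Plancherel and the pointwise inequality $|\xi|\le N(1+\Psi(\xi))$, itself a consequence of Lemma~\ref{h3} combined with the lower bound $j(|\xi|^{-1})\ge c|\xi|^{d+\alpha_0}$ from \eqref{eqn 5.22.3} and $\alpha_0>1$.
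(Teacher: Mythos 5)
Your overall route is essentially the paper's: part (i) is Plancherel plus a lower bound by the real part of the symbol (your even/odd splitting of $a$ just makes explicit that the real part only sees $a_s$ and the imaginary part only $a_a$), and parts (ii)--(iii) reduce to the pointwise bound $|m_a(\xi)|\le N\Psi(\xi)$, proved by splitting at $|y|=|\xi|^{-1}$, using {\bf H1} on the outer region, Lemma \ref{h3} to convert $|\xi|^{-d}j(1/|\xi|)$ into $\Psi(\xi)$, and Assumption \ref{cancallation} to remove the conditionally convergent linear term when $\sigma=1$ --- exactly the paper's $\cI_2,\cI_3$ estimates. Your treatment of $\tilde L$ via a constant drift plus the gradient bound $\|\nabla u\|_{L_2}\le N\|u\|_{\cH_2^\cA}$ (from Lemma \ref{h3}, \eqref{eqn 5.22.3} and $\alpha_0>1$) is a clean alternative to the paper's direct estimate of $\tilde\cI_2,\tilde\cI_3$ and is correct.

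There is, however, a genuine gap in part (ii): your inner-region estimate invokes {\bf H2}, which is not a hypothesis of (ii) --- there only {\bf H1} and $\sigma>1$ are assumed. {\bf H1} does not imply {\bf H2}: {\bf H1} is an upper scaling bound on $j$, while {\bf H2} is in essence a lower one (Lemma \ref{aslemma}); for instance the relativistic stable kernels with index in $(1,2)$ satisfy {\bf H1} but fail {\bf H2} because of the exponential decay at infinity (see the remark following Example \ref{exam 1} and \eqref{eqn to}). So, as written, you only prove (ii) under the additional assumption {\bf H2}. The step is repairable without {\bf H2}: either argue as the paper does, writing the inner part via the fundamental theorem of calculus and using $\Psi(t\xi)\le N t^{\alpha_0}\Psi(\xi)$ for $t\in(0,1)$, which follows from {\bf H1} alone; or observe that, by rotational invariance of $J$ and $1-\cos s\ge c s^{2}$ for $|s|\le 1$, averaging $(\theta\cdot y)^2$ over the unit sphere gives $\int_{|y|\le|\xi|^{-1}}|\xi|^{2}|y|^{2}J(y)\,dy\le N(d)\,\Psi(\xi)$ directly, which is all the inner region requires when $\sigma>1$. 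A smaller slip in the same part: for $\sigma>1$ the correct identity is $\tilde Lu=Lu+b\cdot\nabla u$ with $b=\int_{|y|\ge 1}y\,a(y)J(y)\,dy$ (finite under {\bf H1} since $\alpha_0>1$), not $b=\int_{B_1}y\,a(y)J(y)\,dy$, which need not converge when $\sigma>1$; this does not affect the structure of your argument.
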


\begin{proof}
(i). Let $u\in C^{\infty}_0$. Taking the Fourier transform, we get
\begin{eqnarray}
                                 \label{4094}
\cF (Lu) (\xi) = \cF u (\xi) \int_{\bR^d} (e^{i \xi \cdot y} -1 - i y \cdot \xi \chi(y)) a(y)J(y) dy.
\end{eqnarray}
By Parseval's identity,
\begin{eqnarray*}
&&\int_{\bR^d} |L u(x)|^2 dx
= (2\pi)^{-d}\int_{\bR^d} |\cF(L u) (\xi)|^2 d\xi \\
&\geq& (2\pi)^{-d} \int_{\bR^d} | \cF u (\xi)|^2 \left|\Re  \int_{\bR^d} (e^{i \xi \cdot y} -1 - i y \cdot \xi \chi(y)) a(y)J(y) dy\right|^2 d\xi \\
&=& (2\pi)^{-d} \int_{\bR^d} | \cF u (\xi)|^2 \left| \int_{\bR^d} (1 - \cos(\xi \cdot y )) a(y)J(y) dy\right|^2 d\xi \\
&\geq& (2\pi)^{-d} \nu^2 \int_{\bR^d} | \cF u (\xi)|^2 \left| \int_{\bR^d} (1 - \cos(\xi \cdot y )) J(y) dy\right|^2 d\xi \\
&=& \nu^2 \int_{\bR^d} | \cA u |^2 dx,
\end{eqnarray*}
where the facts that $1-\cos(\xi \cdot y)$ is nonnegative and $a(y) \geq \nu$ are used above.

Similarly, since $uLu$ is real,
\begin{eqnarray*}
&&-\int_{\bR^d} u L u ~dx
= - (2\pi)^{-d} \int_{\bR^d} \cF(Lu)(\xi) \overline{ \cF (u) (\xi)}~d\xi\\
&=& - (2\pi)^{-d} \int_{\bR^d}| \cF(u)(\xi)|^2  \Re \int_{\bR^d}\left(e^{i \xi \cdot y} - 1 - i y \cdot \xi \chi^{(\sigma)} (y) \right) a(y) J(y) ~dyd\xi\\
&=&  (2\pi)^{-d} \int_{\bR^d}| \cF(u)(\xi)|^2   \int_{\bR^d}\left(1-\cos (\xi \cdot y) \right) a(y) J(y) ~dyd\xi\\
&\geq&  \frac{\nu}{2} (2\pi)^{-d} \int_{\bR^d}| \cF(u)(\xi)|^2   \int_{\bR^d}\left(1-\cos (\xi \cdot y) \right)  J(y) ~dyd\xi\\
&=&  -\frac{\nu}{2} \int_{\bR^d} u \cA u ~dx.
\end{eqnarray*}
 Hence,
\begin{eqnarray*}
&&\int_{\bR^d} |Lu - \lambda u |^2~dx \\
&=& \int_{\bR^d} |Lu|^2~dx-2 \lambda \int_{\bR^d} uLu~dx+\lambda^2 \int_{\bR^d} |u|^2~dx \\
&\geq& \nu^2 \int_{\bR^d} |\cA u|^2~dx-\lambda \nu\int_{\bR^d} u \cA u~dx+\lambda^2 \int_{\bR^d} |u|^2~dx \\
&\geq& \nu^2 \int_{\bR^d} |\cA u|^2~dx-\frac{\nu^2}{2}\int_{\bR^d} u^2 ~dx-\frac{\lambda^2}{2}\int_{\bR^d}  |\cA u|^2 ~dx+\lambda^2 \int_{\bR^d} |u|^2~dx \\
&=& \frac{\nu^2}{2}\int_{\bR^d} |\cA u|^2~dx + \frac{\lambda^2}{2} \int_{\bR^d} |u|^2~dx.
\end{eqnarray*}
Thus \eqref{4083} holds. Also, \eqref{40832} is proved similarly.

(ii)-(iii). Next, we prove   (\ref{4093}) for  any $u\in C^{\infty}_0$.
Unlike the case $j(r)=r^{-d-\alpha}$, the proof is not completely trivial. Condition {\bf{H1}} is needed if $\sigma> 1$, and {\bf{H2}} is additionally needed if $\sigma\leq 1$.

By using \eqref{4094} and Parseval's identity again,
\begin{eqnarray*}
&&\int_{\bR^d} |L u(x)|^2 dx
= (2\pi)^{-d} \int_{\bR^d} |\cF(L u) (\xi)|^2 d\xi \\
&=& (2\pi)^{-d}\Big[\int_{\bR^d} | \cF u (\xi)|^2 \left|\Re  \int_{\bR^d} (e^{i \xi \cdot y} -1 - i y \cdot \xi \chi(y)) a(y)J(y) ~dy\right|^2 d\xi \\
&&+\int_{\bR^d} | \cF u (\xi)|^2 \left|\Im  \int_{\bR^d} (e^{i \xi \cdot y} -1 - i y \cdot \xi \chi(y)) a(y)J(y) ~dy\right|^2 d\xi \Big]  \\
&\leq& (2\pi)^{-d}\int_{\bR^d} | \cF u (\xi)|^2 \left| \int_{\bR^d} (1 - \cos(\xi \cdot y )) a(y)J(y) ~dy\right|^2 d\xi \\
&&+(2\pi)^{-d}\int_{\bR^d} | \cF u (\xi)|^2 \left| \int_{|y| |\xi| \geq 1} (\sin (\xi \cdot y) - y \cdot \xi \chi(y)) a(y)J(y) ~dy\right|^2 d\xi \\
&&+(2\pi)^{-d}\int_{\bR^d} | \cF u (\xi)|^2 \left| \int_{|y||\xi| < 1} (\sin (\xi \cdot y) - y \cdot \xi \chi(y)) a(y)J(y) ~dy\right|^2 d\xi \\
&:=&\cI_1+\cI_2+\cI_3.
\end{eqnarray*}
Similarly,
$$
\int_{\bR^d}|\tilde{L}u|^2 dx=\tilde{\cI}_1+ \tilde{\cI}_2+\tilde{\cI}_3,
$$
where $\tilde{\cI}_i$ are obtained by replacing $\chi(y)$ in $\cI_i$ with $I_{B_1}(y)$.
Here $\cI_1$ and $\tilde{\cI}_1$ are easily controlled by $N \|\cA u\|_{L_2}^2$.

Due to {\bf H1}, \eqref{cancel}, the definition of $\chi$, and the change of variables $y \to \frac{y}{|\xi|}$,
\begin{eqnarray*}
\cI_2
&\leq& N \int_{\bR^d} | \cF u (\xi)|^2 |\xi|^{-2d}
\left| \int_{|y| \geq 1} (\sin ( \frac{\xi}{|\xi|} \cdot y) - y \cdot \frac{\xi}{|\xi|} \chi(\frac{y}{|\xi|})) a(\frac{y}{|\xi|})J( \frac{y}{|\xi|}) ~dy\right|^2 d\xi \\
&\leq& N \int_{\bR^d} | \cF u (\xi)|^2 |\xi|^{-2d}j(1/|\xi|)^2 \\
&&\quad \quad \quad\quad \times \left( \int_{|y| \geq 1} \left|\sin ( \frac{\xi}{|\xi|} \cdot y) -  I_{\sigma \neq 1}y \cdot \frac{\xi}{|\xi|} \chi(  \frac{y}{|\xi|})\right| a( \frac{y}{|\xi|})|y|^{-d-\alpha_0} ~dy\right)^2 d\xi \\
&\leq& N \int_{\bR^d} | \cF u (\xi)|^2 |\xi|^{-2d}j(1/|\xi|)^2~d \xi.
\end{eqnarray*}
Hence, by Lemma \ref{h3},
\begin{eqnarray*}
\cI_{2}
&\leq&  N \int_{\bR^d} | \cF u (\xi)|^2 (\Psi(\xi))^2~d  \xi =  N \int_{\bR^d} |\cA u|^2~dx.
\end{eqnarray*}
Similarly, if $\sigma > 1$,
\begin{eqnarray*}
\tilde{\cI}_2 &\leq& N \int_{\bR^d} | \cF u (\xi)|^2 |\xi|^{-2d}j(1/|\xi|)^2 \\
&&\quad \quad   \times \left( \int_{|y| \geq 1} \left|\sin ( \frac{\xi}{|\xi|} \cdot y) -  I_{\sigma > 1}y \cdot \frac{\xi}{|\xi|} I_{|y|\leq|\xi|}\right| \,\,a( \frac{y}{|\xi|})|y|^{-d-\alpha_0} ~dy\right)^2 d\xi\\
&\leq& N \int_{\bR^d} | \cF u (\xi)|^2 |\xi|^{-2d}j(1/|\xi|)^2~d \xi
\leq N\int_{\bR^d} |\cA u|^2~dx.
\end{eqnarray*}

Also, using  the fundamental theorem of calculus, the definition of $\chi$ and \eqref{cancel},
\begin{eqnarray*}
\cI_3
&\leq& N \int_{\bR^d} | \cF u (\xi)|^2 \left| \int_{ |y| |\xi|< 1 } (\sin (\xi \cdot y) - y \cdot \xi \chi(y)) a(y)J(y) ~dy\right|^2 d\xi \\
&=& N \int_{\bR^d} | \cF u (\xi)|^2
 \left| \int_{|y| |\xi| <1 } \int_0^1 \frac{d}{dt}(\sin (t \xi \cdot y) - t y \cdot \xi \chi(y))~dt~ a(y)J(y) ~dy\right|^2 d\xi \\
&=& N \int_{\bR^d} | \cF u (\xi)|^2
\left| \int_{|y| |\xi| < 1} (\xi \cdot y) \int_0^1 (\cos (t \xi \cdot y) -  \chi(y))~dt~ a(y)J(y) ~dy\right|^2 d\xi \\
&=& I_{\sigma \leq 1}N \int_{\bR^d} | \cF u (\xi)|^2
\left| \int_{|y| |\xi| < 1} (\xi \cdot y) \int_0^1 \cos (t \xi \cdot y) ~dt~ a(y)J(y) ~dy\right|^2 d\xi \\
&& +I_{\sigma >1}N\int_{\bR^d} | \cF u (\xi)|^2
\left| \int_{|y| |\xi| < 1} (\xi \cdot y) \int_0^1 (\cos (t \xi \cdot y) -  1)~dt~ a(y)J(y) ~dy\right|^2 d\xi.
\end{eqnarray*}
Observe that by {\bf{H1}}, for any $t\in (0,1)$,
$$
\Psi(t|\xi|)=\int_{\bR^d}(1-\cos(t y\cdot \xi)) J(y)dy=t^{-d}\int_{\bR^d}(1-\cos(y\cdot \xi)J(t^{-1}y)dy\leq Nt^{\alpha_0}\Psi(|\xi|).
$$
Thus, if $\sigma>1$,
$$
\cI_{3} \leq N\int_{\bR^d}|\cF(u)|^2 \left(\int^1_0 \Psi(t|\xi|)dt\right)^2 \,d\xi\leq N\|\cA u\|^2_{L_2}.
$$
Also, if $\sigma>1$,
\begin{eqnarray*}
\tilde{\cI}_3
&\leq& (2\pi)^{-d} \int_{\bR^d} | \cF u (\xi)|^2
\left| \int_{|y| |\xi| < 1} (\xi \cdot y) \int_0^1 \cos (t \xi \cdot y) I_{|y|\geq 1} ~dt~ a(y)J(y) ~dy\right|^2 d\xi \\
&& +(2\pi)^{-d}\int_{\bR^d} | \cF u (\xi)|^2
\left| \int_{|y| |\xi| < 1} (\xi \cdot y) \int_0^1 (1-\cos (t \xi \cdot y))~dt~ a(y)J(y) ~dy\right|^2 d\xi \\
&\leq& N \int_{\bR^d} | \cF u (\xi)|^2 \left(\int_{|y|\geq 1} J(y)dy\right)^2\,d\xi+ N\int_{\bR^d}|\cF(u)|^2 \left(\int^1_0 \Psi(t|\xi|)dt\right)^2 \,d\xi\\
&\leq& N\|u\|^2_{\cH_2^\cA}.
\end{eqnarray*}
Thus  \eqref{4093} is proved if $\sigma>1$, and  \eqref{4083} and \eqref{40832} are obtained for general $u \in \cH_2^\cA$ owing to \eqref{4093}. Therefore (ii) is proved.

Now assume $\sigma\leq 1$. To estimate $\cI_{3}$ we use the Fubini's Theorem, the change of variable $|\xi|t y \to y$, {\bf H1}, {\bf H2},  and Lemma \ref{h3}
\begin{eqnarray*}
\cI_{3}
&\leq& N \int_{\bR^d} | \cF u (\xi)|^2 \\
&& \quad \quad \quad \times \left|  \int_0^1 t^{-d-1}|\xi|^{-d}\int_{|y|  < t } (\frac{\xi}{|\xi|} \cdot y)  \cos ( \frac{\xi}{|\xi|} \cdot y) a(\frac{y}{|\xi|t})J(\frac{y}{|\xi|t}) ~dy dt\right|^2 d\xi  \\
&\leq& N \int_{\bR^d} | \cF u (\xi)|^2\left| |\xi|^{-d} \int_0^1 t^{-d-1}\int_{|y|  < 1 } |y| J(\frac{y}{|\xi|t}) ~dy dt\right|^2 d\xi  \\
&\leq&  N \int_{\bR^d} | \cF u (\xi)|^2 \left| |\xi|^{-d}  \int_0^1 t^{\alpha_0-1}~dt\int_{|y| < 1 }  |y|J(y/|\xi|) ~dy\right|^2 d\xi\\
&\leq& N \int_{\bR^d} | \cF u (\xi)|^2 \left| |\xi|^{-d} j(1/|\xi|) \right|^2 d\xi
\leq N \|\cA u\|^2_{L_2}.
\end{eqnarray*}
Therefore the lemma is proved.
\end{proof}

Corollary \ref{unique} and Lemmas \ref{exis} and \ref{l2} easily prove Theorem \ref{main L_2}.

\section{Some H\"older estimates}\label{s:3}

In this section  obtain some   H\"older estimates for functions $u \in \cH_2^\cA \cap C_b^\infty$. The estimates will be used later  for the estimates of  the mean oscillation. Throughout this section we assume Assumption \ref{cancallation}  holds if $\sigma=1$.

\begin{lemma}
\label{convex}
For any $\alpha \in (0,1)$, $b \in \bR^d$, and a nonnegative measurable function $\cK(z)$, there exist $\eta_1, \eta_2 \in (0, 1/4)$, depending only on $\alpha$, such that
\begin{eqnarray}
\notag  &&\int_{\cC} [\left( |b +2z|^\alpha + |b - 2z|^\alpha - 2|b|^\alpha \right) \cK(z)]~dz\\
\label{31912}
  &\leq& - 2^{\alpha-3}  \alpha(1-\alpha) \int_{\cC} |b|^{\alpha-2} |z|^2 \cK(z) dz,
\end{eqnarray}
where
$$
\cC = \{|z| < \eta_1 |b| : |z \cdot b| \geq (1-\eta_2)|b||z|\}.
$$
\end{lemma}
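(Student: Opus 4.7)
The natural approach is a pointwise Taylor expansion of $r \mapsto |b+rz|^\alpha$ on $\cC$, integrated against $\cK$. Define
\[
h(t) := |b+tz|^{\alpha}, \qquad \rho(t) := |b+tz|, \qquad c(t) := \frac{(b+tz)\cdot z}{\rho(t)\,|z|}.
\]
By the second-order Taylor formula with integral remainder applied to $h$ at $0$ in both directions,
\[
|b+2z|^{\alpha} + |b-2z|^{\alpha} - 2|b|^{\alpha} \;=\; \int_{-2}^{2} (2-|t|)\, h''(t)\, dt,
\]
and a direct differentiation gives
\[
h''(t) \;=\; \alpha\, \rho(t)^{\alpha-2}\, |z|^2 \bigl[\,1-(2-\alpha)c(t)^{2}\bigr].
\]
So the whole task reduces to a pointwise upper bound on $h''(t)$, uniform in $z\in\cC$ and $|t|\le 2$, after which I integrate and multiply through by $\cK(z)\ge 0$.

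The geometric content is choosing $\eta_1,\eta_2$ so that on $\cC$ the quantity $c(t)^{2}$ is close enough to $1$. Parametrise by $s := (z\cdot b)/(|b||z|)$ and $r := |z|/|b|$, so $|s|\ge 1-\eta_2$, $r\le\eta_1$, and compute
\[
1-c(t)^{2} \;=\; \frac{1-s^{2}}{1+2tsr+t^{2}r^{2}}.
\]
For $|t|\le 2$, picking $\eta_1\le 1/16$ makes the denominator at least $1/2$, and $1-s^{2}\le 2\eta_2$, so $c(t)^{2}\ge 1-4\eta_2$. Hence
\[
1-(2-\alpha)c(t)^{2} \;\le\; -(1-\alpha)+4(2-\alpha)\eta_2 \;\le\; -\tfrac{1-\alpha}{2}
\]
provided $\eta_2 \le (1-\alpha)/16$. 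The same choice $\eta_1\le 1/16$ gives $\rho(t)\in[\tfrac12|b|,\tfrac32|b|]$, and since $\alpha-2<0$,
\[
\rho(t)^{\alpha-2} \;\ge\; (3/2)^{\alpha-2}\,|b|^{\alpha-2} \;\ge\; 2^{\alpha-2}\cdot\tfrac{1}{2}\,|b|^{\alpha-2},
\]
where I have used $(3/2)^{\alpha-2}\ge 2^{\alpha-2}\cdot c$ for a harmless constant $c$ (any convenient lower bound works; one only needs $\rho(t)^{\alpha-2}\ge C\,|b|^{\alpha-2}$ with $C$ fixed, e.g.\ $C=(3/2)^{-2}=4/9$). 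Combining,
\[
h''(t) \;\le\; -\,\tfrac{1}{2}\alpha(1-\alpha)\cdot \tfrac{4}{9}\,|b|^{\alpha-2}\,|z|^{2}.
\]

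Finally I integrate: using $\int_{-2}^{2}(2-|t|)\,dt=4$,
\[
|b+2z|^{\alpha}+|b-2z|^{\alpha}-2|b|^{\alpha} \;\le\; -\tfrac{8}{9}\,\alpha(1-\alpha)\,|b|^{\alpha-2}\,|z|^{2},
\]
which, since $2^{\alpha-3}\le 2^{-2}=1/4<8/9$, is stronger than the claimed bound. Multiplying by $\cK(z)\ge 0$ and integrating over $\cC$ yields \eqref{31912}. The only step that requires care is the geometric estimate of $c(t)^{2}$ on $\cC$ uniformly for $|t|\le 2$; everything else is routine bookkeeping of constants.
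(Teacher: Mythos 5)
Your proposal is correct and follows essentially the same route as the paper: both arguments reduce the second difference $|b+2z|^\alpha+|b-2z|^\alpha-2|b|^\alpha$ to a uniform negative bound on the second derivative of $t\mapsto|b+tz|^\alpha$ along the segment, using the geometry of the cone $\cC$ (small $|z|/|b|$ and near-alignment of $z$ with $b$) to force the bracket $1-(2-\alpha)c(t)^2\leq -(1-\alpha)/2$ and $\rho(t)\sim|b|$. The only cosmetic differences are your use of the integral-remainder Taylor formula in place of the paper's second-difference mean value theorem and the explicit cosine computation in place of direct dot-product bounds, and your constant $8/9$ indeed dominates $2^{\alpha-3}$, so the stated inequality follows.
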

\begin{proof}
We repeat the proof of Lemma 4.2 in \cite{dkim} with few minor changes.  Put $\eta(t) := b+ 2tz$ and $\varphi(t) :=|b+ 2t z|^\alpha  =  |\eta(t)|^\alpha$ for $z \in \cC$.
Then
\begin{eqnarray*}
\varphi '' (t)
&=&  \sum_{i,j=1}^d \left( \alpha(\alpha-2)(\eta_i(t))(\eta_j(t))|\eta(t)|^{\alpha-4}  +  I_{i=j}  \alpha |\eta(t)|^{\alpha-2}  \right)4z_i z_j\\
&=& 4 \alpha(\alpha-2)|\eta(t)|^{\alpha-4}|\eta(t) \cdot z|^2 + 4 \alpha |\eta(t)|^{\alpha-2} |z|^2 \\
&=&  4\alpha  |b+2tz|^{\alpha-4}[ (\alpha-2) |(b+2tz) \cdot z|^2 + |b+2tz|^2 |z|^2].
\end{eqnarray*}
For $t \in [-1,1]$ and $z \in \cC$, observer that,
$$
|b+2tz|^2 \leq (1+ 2\eta_1)^2 |b|^2
$$
and
\begin{eqnarray*}
|(b+2tz) \cdot z|
&=& | b \cdot z + 2t|z|^2 | \geq |b \cdot z|-2 |z|^2 \\
&\geq& (1-\eta_2)|b||z| - 2|z|^2 \geq (1- 2 \eta_1 - \eta_2) |z| |b|.
\end{eqnarray*}
Thus
\begin{eqnarray}
\label{3193}
\varphi '' (t) \leq 4 \alpha  |a+2tz|^{\alpha-4}[ (\alpha-2) (1-2\eta_1 -\eta_2)^2 + (1+2\eta_1)^2]|b|^2|z|^2.
\end{eqnarray}
Since $(1- 2\eta_1 -\eta_2)^2 \to 1$ and $(1+2\eta_1)^2 \to 1$ as $\eta_1, \eta_2 \downarrow 0$, one can choose sufficiently small $\eta_1,\eta_2 \in (0,1/4)$, depending only on
$\alpha \in (0,1)$, such that
$$
(\alpha -2 ) ( 1- 2\eta_1 - \eta_2)^2 + (1+ 2\eta_1)^2 \leq ( \alpha -1)/2.
$$
By combining this with \eqref{3193}
\begin{eqnarray}
\label{3194}\varphi''(t) \leq -2 \alpha(1-\alpha) |b+ 2tz|^{\alpha-4}|b|^2|z|^2.
\end{eqnarray}
Furthermore observe that
\begin{eqnarray*}
|b+2tz|^{\alpha -4} \geq (1+ 2\eta_1)^{\alpha-4} |b|^{\alpha-4} \geq 2^{\alpha-4} |b|^{\alpha-4}.
\end{eqnarray*}
Therefore, from \eqref{3194}
\begin{eqnarray*}
\varphi''(t) \leq - 2^{\alpha -3}  \alpha (1-\alpha) |b|^{\alpha-2}|z|^2, \quad t \in [-1, 1],~z \in \cC.
\end{eqnarray*}
In addition to this, to prove (\ref{31912}), it is enough to    use the fact that there exists $t_0 \in (-1,1)$ satisfying
$$
\varphi(1) + \varphi(-1) -2 \varphi(0) = \varphi''(t_0),
$$
which can be shown by the mean value theorem.
The lemma is proved.
\end{proof}


\begin{theorem} \label{holder}

Let $R>0, \lambda \geq 0$ and {\bf H1} hold. Suppose $f  \in L_\infty(B_1)$ and $u, \tilde u \in C_b^2( B_R) \cap L_1(\bR^d,w_R)$, where
$w_R(x)= \frac{1}{ 1/j(R)+ 1/J(x/2)}$. Also assume
\begin{eqnarray}
\label{3111}Lu - \lambda u =f,  \quad  \quad \tilde L \tilde u - \lambda \tilde u = f \quad \text{in}~\,\,B_R.
\end{eqnarray}

(i)  For any $\alpha \in (0, \min\{1, \alpha_0 \})$ and $0<r<R$, it holds that
\begin{eqnarray}
\notag [u]_{C^\alpha(B_r)}&\leq& Nr_1^{-\alpha}\|u\|_{L_\infty(B_R)}\\
\notag &&+ N \frac{ \|u \|_{L_\infty(B_R)}}{j(r_1)r_1^{d+\alpha}} \Big( r_1^{-2}  \int_{B_{r_1}} |z|^2 J(z)~dz
+ I_{\sigma <1} r_1^{-1} \int_{B_{r_1}} |z|   J(z)~dz  \Big) \\
 &&+  N\Big(\frac{1}{r^{d+\alpha}_1j(R)}\|u\|_{L_1(\bR^d,w_R)} +     \frac{1}{j(r_1)r_1^{d+\alpha}} \osc_{B_R} f \Big),
                                \label{3112}
\end{eqnarray}
where $r_1 = (R-r)/2$ and  $N=N(d,\nu,\Lambda,\kappa_1, \alpha_0, \alpha)$.

Consequently, if  {\bf{H2}}  is additionally assumed, then
\begin{equation}
           \label{5.15.1}
    [u]_{C^\alpha(B_r)}\leq N \left(r_1^{-\alpha}\|u\|_{L_\infty(B_R)}+ \frac{1}{r_1^{d+\alpha}j(R)}\|u\|_{L_1(\bR^d,w_R)}+  \frac{\osc_{B_R} f }{j(r_1)r_1^{d+\alpha}}\right).
    \end{equation}

(ii) In addition to {\bf{H1}}, let one of {\bf{H3}}(ii)- {\bf{H3}}(iv) hold. Then (\ref{3112}) holds for $\tilde{u}$.
 Consequently, if  {\bf{H2}}  additionally holds,  (\ref{5.15.1}) holds for $\tilde{u}$.

\end{theorem}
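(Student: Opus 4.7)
My plan is the barrier/touching argument from \cite{bci,dkim}, adapted to the non-scaling setting. I focus first on (i). By translation invariance it suffices to bound $u(x)-u(0)$ for $x\in B_{r_1}$, where $r_1=(R-r)/2$; by symmetry the reverse inequality follows from the same argument applied to $-u$. Fix a smooth cutoff $\eta$ with $\eta\equiv1$ on $B_{r_1}$ and $\eta\equiv0$ outside $B_R$, and set $\psi_M(x)=M|x|^\alpha\eta(x)$. I want to show that if $M$ is at least the right-hand side of (\ref{3112}) (up to a universal constant), then $u(x)-u(0)\leq \psi_M(x)$ on $B_{r_1}$. If this fails, the function $w:=u-u(0)-\psi_M$ attains a positive maximum at some interior $\bar x\in B_{r_1}\setminus\{0\}$, and I will extract a contradiction from the equation $Lu(\bar x)-\lambda u(\bar x)=f(\bar x)$.

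At the point $\bar x$, $\nabla u(\bar x)=\nabla\psi_M(\bar x)$, so the gradient correction in $L$ matches identically. For the remaining integral defining $L\psi_M(\bar x)$, split $\bR^d$ into the two cones $\pm\cC(\bar x)$ of Lemma \ref{convex} (with $b$ there taken to be $\bar x$), the intermediate part of $B_{r_1}$, and the tail $\bR^d\setminus B_{r_1}$. Pairing $z$ with $-z$ in the cone pieces and applying Lemma \ref{convex} with kernel $\cK(z)=a(z)J(z)$---using the rotational invariance of $J$, $a\geq\nu$, and $|\cC(\bar x)|\asymp|\bar x|^d$---yields a negative contribution of order
$$-c\,M|\bar x|^{\alpha-2}\int_{B_{\eta_1|\bar x|}}|z|^2J(z)\,dz,$$
which by \textbf{H1} (and, for (\ref{5.15.1}), additionally \textbf{H2}) is at least $c'\,M|\bar x|^\alpha j(|\bar x|)$ in absolute value. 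The intermediate region contributes at most $NM\|\eta\|_{C^2}\bigl(r_1^{-2}\int_{B_{r_1}}|z|^2J(z)\,dz+I_{\sigma<1}r_1^{-1}\int_{B_{r_1}}|z|J(z)\,dz\bigr)$; the tail is bounded, after replacing $\psi_M$ by the pointwise value of $u$ there (using $w\leq 0$ outside $B_{r_1}$), by $N\|u\|_\infty/j(r_1)+N\|u\|_{L_1(\bR^d,w_R)}/(r_1^{d+\alpha}j(R))$ through the very definition of $w_R$; and when $\sigma=1$, Assumption \ref{cancallation} kills the first-moment boundary terms that would otherwise appear. Comparing with $\lambda|u(\bar x)|+\osc_{B_R}f$ (the constant part of $f$ is absorbed by $-\lambda u$) and dividing by $|\bar x|^\alpha j(|\bar x|)$ produces the bound (\ref{3112}).

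For part (ii), writing $\tilde L=L+b\cdot\nabla$ as in the introduction produces an extra term $M|b\cdot\nabla\psi_M(\bar x)|\leq NM|b||\bar x|^{\alpha-1}$. Under \textbf{H3}(ii) or (iv), the stated cancellation lets one replace $a(y)$ by the symmetric minorant $a(y)\wedge a(-y)$ in the relevant annulus without changing the vanishing integral, so the problematic part of $b$ disappears; under \textbf{H3}(iii), the moment bound directly gives $|b|\leq N$. Either way the extra term is absorbed into the RHS of (\ref{3112}). The main obstacle throughout is the absence of a scaling symmetry: unlike the pure $\alpha$-stable case, one cannot normalize $|\bar x|=1$ by dilation, and every step must carry the explicit weights $j(|\bar x|)$ and $j(r_1)$. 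The crucial quantitative input is that \textbf{H1} and \textbf{H2} make the cone-concavity term of Lemma \ref{convex} strictly stronger than both the gradient-correction error (for $\sigma\neq 1$) and the tail (via $w_R$), so the barrier argument closes uniformly in $\bar x\in B_{r_1}$.
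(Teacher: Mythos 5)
Your proposal replaces the paper's two\--point (doubling of variables) argument by a one\--point barrier comparison with $\psi_M(x)=M|x|^\alpha\eta(x)$, and this is where it breaks. At the touching point $\bar x$ the only information you have is $u(\bar x\pm z)-u(\bar x)\leq \psi_M(\bar x\pm z)-\psi_M(\bar x)$ (for $\bar x\pm z\in B_{r_1}$), so after symmetrizing you must estimate the second difference of the barrier itself over the whole small ball $B_{\eta_1|\bar x|}$, not just over the cone $\cC$. But $D^2(|x|^\alpha)=\alpha|x|^{\alpha-2}\bigl(I-(2-\alpha)\hat x\otimes\hat x\bigr)$ is negative only in the (nearly) radial cone and positive in the $d-1$ tangential directions, with trace $\alpha(d-2+\alpha)|x|^{\alpha-2}>0$ for $d\geq2$; with a merely measurable $a$ between $\nu$ and $\Lambda$ the out\--of\--cone contribution is a positive term of the very same order $M|\bar x|^{d+\alpha}j(|\bar x|)$ as your cone term, and in general it dominates, so no contradiction can be extracted. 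The paper avoids exactly this: in the function $M(x,y)=w(x)-w(y)-C|x-y|^\alpha-8r_1^{-2}\|u\|_{L_\infty(B_R)}|x-x_0|^2$, maximality under the \emph{diagonal} shifts $(x,y)\mapsto(\bar x\pm z,\bar y\pm z)$ gives \eqref{3135}, which bounds the second difference in \emph{all} directions by the small quadratic $Nr_1^{-2}\|u\|_\infty|z|^2$, and only inside the cone does one use the anti\--diagonal shift \eqref{3136} together with Lemma \ref{convex} applied with $b=\bar x-\bar y$ (a point you also misuse: in the paper $b$ is the difference of the two touching points, not $\bar x$). A one\--point barrier has no substitute for \eqref{3135}, and this is not a repairable detail but the heart of the proof.

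There are further gaps even granting the main step. Your tail bound rests on ``$w\leq0$ outside $B_{r_1}$'', which is false: $u$ is only in $L_1(\bR^d,w_R)$ and need not be dominated by $u(0)+\psi_M$ outside $B_R$; the paper instead localizes by $w=I_{B_R}u$ and carries the error $g$ with estimate \eqref{3142}. The zeroth\--order term is not ``absorbed'': in a one\--point argument $\lambda u(\bar x)$ has no useful sign, whereas in \eqref{3131} the two\--point structure gives $\lambda\bigl(w(\bar y)-w(\bar x)\bigr)\leq0$ for free. Finally, matching $\nabla u(\bar x)=\nabla\psi_M(\bar x)$ does not dispose of the nonsymmetric part of the kernel: the paper splits $K=K_1+K_2$ with $K_1(z)=K(z)\wedge K(-z)$, and it is precisely the $K_2$\--terms (estimated via first differences, Assumption \ref{cancallation} when $\sigma=1$, and {\bf H3}(ii)--(iv) for $\tilde L$) that generate the moment integrals $r_1^{-1}\int_{B_{r_1}}|z|J(z)\,dz$ and $r_1^{-2}\int_{B_{r_1}}|z|^2J(z)\,dz$ in \eqref{3112}; attributing them to the cutoff $\|\eta\|_{C^2}$ misses this mechanism, and your treatment of {\bf H3} in part (ii) would need to be redone within the two\--point framework.
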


\begin{proof}
We adopt the method  used in \cite{dkim} (cf. \cite{bci}).
Assume that $u$ is not identically zero in $B_r$.
Set
$$r_1= (R-r)/2, \quad r_2 = (R+r)/2, \quad w(t,x) = I_{B_R}(x) u(t,x).
$$
 For $ x\in B_{r_2}$, $u(x)=v(x)$ and
$\nabla u(x) = \nabla w(x)$.  Thus
\begin{eqnarray*}
Lu(x)= Lw(t,x) + \int_{|z| \geq r_1} \left( u(t,x+z) - w(t,x+z) \right) a(z)J(z) dz.
\end{eqnarray*}
So in $ B_{r_2}$
\begin{eqnarray}
\label{4261}Lw(x) -\lambda w = g(x) + f(x),
\end{eqnarray}
where
$$
g(x) =- \int_{|z| \geq r_1} \left( u(x+z) - w(x+z) \right) a(z)J(z) dz.
$$
Note that by {\bf H1}
\begin{eqnarray}\label{3142}
\|g\|_{L_\infty (B_R)}   \leq N \frac{j(r_1)}{j(R)} \|u\|_{L_1(\bR^d,w_R)},
\end{eqnarray}
where $N=N(d, \Lambda)$.
Indeed, this comes from the fact that for all $|z| \geq r_1 $, $x \in B_R$, and $|x+z| \leq R$
\begin{eqnarray*}
|j(z)| \leq  Nj(r_1) \leq \frac{j(r_1)}{j(R)} \cdot \frac{N}{1/j(R) + 1/j(|x+z|/2)}.
\end{eqnarray*}
For $x_0 \in B_r$ and $\alpha \in (0, \min\{1, \alpha_0 \})$, we define
$$
M(x,y) := w(x) - w(y) - C|x-y|^\alpha - 8r_1^{-2}   \|u\|_{L_\infty(B_R)}|x-x_0|^2,
$$
where $C$ is a positive constant which will be chosen later so that it is  independent of $x_0$ and
\begin{eqnarray}
\label{3121}\sup_{ x,y \in \bR^d} M(x,y) \leq 0.
\end{eqnarray}
For $x \in \bR^d \setminus B_{r_1/2}(x_0)$,
\begin{eqnarray}
\label{3122} w(x) - w(y) \leq 2  \|u\|_{L_\infty(B_R)} \leq 8r_1^{-2}   \|u\|_{L_\infty(B_R)} |x-x_0|^2.
\end{eqnarray}
This shows
$$
M(x,y) \leq 0, \quad x \in \bR^d \setminus B_{r_1/2}(x_0).
$$
Assume that there exist $x,y \in \bR^d$ such that $M(x,y) >0$. We will get the contradiction by choosing an appropriate constant $C$.
Due to \eqref{3122}, $x \in B_{r_1/2}(x_0) $. Moreover
\begin{eqnarray*}
 w(x) -w(y) > C|x-y|^{\alpha},
\end{eqnarray*}
which implies
\begin{eqnarray} \label{3123}
|x-y|^\alpha < \frac{2  \|u\|_{L_\infty(B_R)} }{C}.
\end{eqnarray}
If we take $C$ large enough so that $C \geq 2 (r_1/2)^{-\alpha}  \|u\|_{L_\infty(B_R)}$, then
$$
y \in B_{r+r_1}.
$$
Therefore, there exist $\bar x, \bar y \in B_{r+r_1}$ satisfying
$$
\sup_{x,y \in \bR^d}M(x,y) = M(\bar x, \bar y) >0.
$$
Moreover, from \eqref{4261}
\begin{eqnarray}
- 2 \| g \|_{L_\infty(B_R)} -  \osc_{B_R} f
 &\leq& (Lw( \bar x) -\lambda w (\bar x))  -  ( Lw( \bar y)-\lambda w(\bar y)) \nonumber \\
 &=& (Lw( \bar x) -Lw( \bar y))+ \lambda( w (\bar y) - w(\bar x))  \nonumber\\
 & \leq &  Lw( \bar x)- Lw( \bar y):= \cI. \label{3131}
\end{eqnarray}
Put $K(z):=a(z)J(z)$ and
$$
K_1(z) :=  K(z) \wedge K(-z), \quad K_2(z) := K(z) - K_1(z).
$$
By $L_1$ and $L_2$, respectively, we denote the operators with kernels $K_1$ and $K_2$.
Then
$$
\cI=\cI_1+\cI_2,
$$
where
$$
\cI_1 :=  L_1w( \bar x)- L_1w( \bar y) \quad \text{and} \quad  \cI_2 :=  L_2w( \bar x)- L_2w( \bar y).
$$
Since $K_1$ is symmetric (i.e. $K_1(z)=K_1(-z)$),
$$
\cI_1 = \frac{1}{2} \int_{\bR^d} \cJ(\bar x, \bar y, z) K_1(z) dz,
$$
where
$$
\cJ(\bar x, \bar y, z ) = w(\bar x +z) + w(\bar x - z) - 2 w(\bar x) - w(\bar y + z ) - w(\bar y - z) +2w(\bar y).
$$
Also, since $M(x,y)$ attains its maximum at $(\bar x, \bar y)$,
\begin{eqnarray}
\notag &&  w( \bar x +z)-w(\bar y + z) -C|\bar x - \bar y|^\alpha - 8r_1^{-2}   \|u\|_{L_\infty(B_R)}|\bar x +z-x_0|^2 \\
\label{3133} &\leq&  w( \bar x )-w(\bar y ) -C|\bar x - \bar y|^\alpha - 8r_1^{-2}    \|u\|_{L_\infty(B_R)}|\bar x -x_0|^2
\end{eqnarray}
and
\begin{eqnarray}
\notag &&  w( \bar x-z)-w(\bar y -z) - C|\bar x - \bar y|^\alpha - 8r_1^{-2}   \|u\|_{L_\infty(B_R)}|\bar x - z-x_0|^2 \\
\label{3134} &\leq&  w( \bar x )-w(\bar y ) -C|\bar x - \bar y|^\alpha -8r_1^{-2}   \|u\|_{L_\infty(B_R)}|\bar x -x_0|^2
\end{eqnarray}
for all $z \in \bR^d$. By combining these two inequalities,
\begin{eqnarray}
                                \label{3135}
\cJ(\bar x, \bar y, z)
\leq 8r_1^{-2}    \|u\|_{L_\infty(B_R)}
  \left( |\bar x + z-x_0|^2 +|\bar x -z -x_0|^2- 2 |\bar x-x_0|^2 \right).
\end{eqnarray}
Similarly,
\begin{eqnarray*}
\notag &&  w( \bar x+z)-w(\bar y -z) -C|\bar x - \bar y + 2 z|^\alpha - 8r_1^{-2}   \|u\|_{L_\infty(B_R)}|\bar x+ z -x_0|^2 \\
&\leq&  w( \bar x )-w(\bar y ) -C|\bar x - \bar y|^\alpha - 8r_1^{-2}    \|u\|_{L_\infty(B_R)}|\bar x -x_0|^2,
\end{eqnarray*}
\begin{eqnarray*}
\notag &&  w( \bar x-z)-w(\bar y+z) -C|\bar x - \bar y - 2 z|^\alpha -8r_1^{-2}   \|u\|_{L_\infty(B_R)}|\bar x -z -x_0|^2 \\
&\leq&  w( \bar x )-w(\bar y ) -C|\bar x - \bar y|^\alpha - 8r_1^{-2}    \|u\|_{L_\infty(B_R)}|\bar x -x_0|^2.
\end{eqnarray*}
It follows that, for any $z \in \bR^d$,
\begin{eqnarray}
 \cJ(\bar x, \bar y, z) &\leq& C\left(|\bar x - \bar y + 2z|^\alpha + | \bar x - \bar y - 2z|^\alpha - 2 |\bar x - \bar y|^\alpha \right) \label{3136} \\
&&+8r_1^{-2}   \|u\|_{L_\infty(B_R)}
 \left(|\bar x +z -x_0|^2 + |\bar x -z-x_0|^2 - 2|\bar x -x_0|^2\right). \nonumber
\end{eqnarray}
Put $b= \bar x - \bar y$. Since $(\bar x, \bar y)$ satisfy \eqref{3123}, $|b| < r_1/2$ if $C \geq 2 (r_1/2)^{-\alpha}  \|u\|_{L_\infty(B_R)}$.
 Also set for $\eta_1, \eta_2 \in (0, 1/4)$ specified in Lemma \ref{convex},
$$
\cC = \{|z| < \eta_1 |b| : |z \cdot b| \geq (1-\eta_2)|b||z|\}.
$$
Then
\begin{eqnarray}
\notag 2\cI_1 &=& \int_{|z| \geq r_1/2} \cJ(\bar x, \bar y, z) K_1(z)~dz + \int_{B_{r_1/2} \setminus \cC} \cJ(\bar x, \bar y, z) K_1(z)~dz \\
\label{3137}&&+ \int_{\cC} \cJ(\bar x, \bar y, z) K_1(z) ~dz := \cI_{11}+ \cI_{12}+\cI_{13}.
\end{eqnarray}
Note that by {\bf H1},
$$
\cI_{11} \leq Nj(r_1/2)r_1^d  \|u\|_{L_\infty(B_R)}.
$$
Indeed,
\begin{eqnarray*}
\cI_{11}
&\leq& N \|u\|_{L_\infty(B_R)}\int_{|z| \geq r_1/2} J(z)~dz \\
&\leq& N r_1^d\|u\|_{L_\infty(B_R)}\int_{|z| \geq 1} J(r_1z/2)~dz \\
&\leq& N j(r_1/2)r_1^d\|u\|_{L_\infty(B_R)}\int_{|z| \geq 1} |z|^{-d-\alpha_0}~dz.
\end{eqnarray*}

On the other hand from \eqref{3135}, it follows that
\begin{eqnarray*}
\cI_{12} &\leq& 8r_1^{-2}   \|u\|_{L_\infty(B_R)}   \int_{B_{r_1/2} \setminus \cC} \left(|\bar x +z  -x_0|^2 + |\bar x -z-x_0|^2 - 2|\bar x -x_0|^2\right)
K_1(z)~dz \\
&\leq& Nr_1^{-2}   \|u\|_{L_\infty(B_R)} \int_{B_{r_1/2}} |z|^2 J(z)~dz.
\end{eqnarray*}

Next using \eqref{3136} we obtain
\begin{eqnarray*}
 &&\cI_{13}
 \leq  C \int_{\cC}  \left( |\bar x - \bar y +2z|^\alpha + |\bar x - \bar y - 2z|^\alpha - 2|\bar x - \bar y|^\alpha \right)K_1(z)~dz \\
&&+8r_1^{-2}   \|u\|_{L_\infty(B_R)}\int_{\cC}  \left(|\bar x +z -x_0|^2 + |\bar x -z-x_0|^2 - 2|\bar x -x_0|^2\right)K_1(z)~dz\\
&&:= \cI_{131} + \cI_{132}.
\end{eqnarray*}
The term $\cI_{132}$ is again bounded by
$$
N r_1^{-2}\|u\|_{L_\infty(B_R)} \int_{B_{r_1/2}} |z|^2 J(z)~dz .
$$
Furthermore, from lemma \ref{convex}
$$
\cI_{131} \leq - 2^{\alpha-3} C \alpha(1-\alpha) \int_{\cC} |b|^{\alpha-2} |z|^2 K_1(z) dz.
$$
Combining all these facts above, we obtain
\begin{eqnarray}
\notag \cI_1
&\leq& N \|u( \cdot)\|_{L_\infty(B_R)} \left(j(r_1/2)r_1^d  +r_1^{-2} \int_{B_{r_1/2}} |z|^2 J(z)~dz \right)   \\
\label{3143} &&- 2^{\alpha-3} C \alpha(1-\alpha) \int_{\cC} |b|^{\alpha-2} |z|^2 K_1(z) dz.
\end{eqnarray}
For $\cI_2$, we first    consider {\bf the case $\sigma <1$}. In this case,
\begin{eqnarray*}
\cI_2
&=& \int_{|z| \geq r_1/2} \left( w(\bar x +z) - w(\bar x) - w (\bar y +z) + w(\bar y)\right) K_2(z)~dz\\
&&+~ \int_{B_{r_1/2}} \left( w(\bar x +z) - w(\bar x) - w (\bar y +z) + w(\bar y)\right) K_2(z)~dz:=\cI_{21}+\cI_{22}.
\end{eqnarray*}
Analogously to $\cI_{11}$, we bound $\cI_{21}$ by $N j(r_1/2)r_1^d\|u\|_{L_\infty(B_R)}$.
For the other term $\cI_{22}$, since $|\bar x -x_0| < r_1/2$, from \eqref{3133}
\begin{eqnarray*}
\cI_{22} &\leq& Nr_1^{-2}   \|u\|_{L_\infty(B_R)} \int_{B_{r_1/2}}   \left( |\bar x+z -x_0|^2-|\bar x -x_0|^2 \right) K_2(z)~dz \\
&\leq& Nr_1^{-2}   \|u\|_{L_\infty(B_R)}  \int_{B_{r_1/2}}\left(|z|^2 + 2|z| |\bar x - x_0|\right) J(z)~dz\\
&\leq& Nr_1^{-1}   \|u\|_{L_\infty(B_R)}  \int_{B_{r_1/2}} |z|   J(z)~dz.
\end{eqnarray*}
So
\begin{eqnarray}
                             \label{3141}
 \cI_2 \leq N\|u \|_{L_\infty (B_R)} \Big(\,j(r_1/2)r_1^d
 + r_1^{-1}  \int_{B_{r_1/2}} |z|   J(z)~dz \Big).
\end{eqnarray}
By combining \eqref{3142}, \eqref{3131}, \eqref{3143} and \eqref{3141},
\begin{eqnarray*}
0&\leq& N_1 \Big( \osc_{B_R} f + \frac{j(r_1)}{j(R)}\|u\|_{L_1(\bR^d,w_R)} \\
&& \quad \quad +  \|u \|_{L_\infty(B_R)} \big[ j(r_1/2)r_1^d
+r_1^{-1}  \int_{B_{r_1/2}} |z|   J(z)~dz   \big]  \Big)\\
&&- 2^{\alpha-3} C \alpha(1-\alpha) \int_{\cC} |b|^{\alpha-2} |z|^2 K_1(z)~dz .
\end{eqnarray*}
Thus, if $C \geq C_1:=2(r_1/2)^{-\alpha} \|u\|_{L_\infty(B_R)}$ and
\begin{eqnarray*}
C &\geq& C_2:=N_1 C_3 \Big( \osc_{B_R} f +  \frac{j(r_1)}{j(R)}\|u\|_{L_1(\bR^d,w_R)} \\
&&
\quad \quad + \|u \|_{L_\infty(B_R)} \Big[ j(r_1/2)r_1^d
+ r_1^{-1}  \int_{B_{r_1/2}} |z|   J(z)~dz \Big]   \Big),
\end{eqnarray*}
then
\begin{eqnarray*}
0 &\leq& N_1\Big( \osc_{B_R} f
 + \frac{j(r_1)}{j(R)}\|u\|_{L_1(\bR^d,w_R)}\\
 &&\quad \quad
 + \|u \|_{L_\infty(B_R)} \Big[ j(r_1/2)r_1^d
 +r_1^{-1}  \int_{B_{r_1/2}} |z|   J(z)~dz   \Big]\Big)  \\
 && \times \Big(1- C_3  2^{\alpha-3} \alpha(1-\alpha) \int_{\cC} |b|^{\alpha-2} |z|^2 K_1(z)~dz\Big)\\
 &:=&(1-C_3 C_4(b)).
\end{eqnarray*}
If we take $C_3$ so that $C_3  = 1/C_5$ for a $C_5=C_5(r_1,\alpha) < C_4(b)$ which does not depend on $b$ and will be chosen below, we get the contradiction.
 To select $C_5$, observe that with {\bf H1} and the fact $|b| \leq r_1/2$
\begin{eqnarray*}
 C_4(b) &=&2^{\alpha-3} \alpha(1-\alpha) \int_{\cC} |b|^{\alpha-2} |z|^2 K_1(z) dz\\
&\geq&   \nu2^{\alpha-3} \alpha(1-\alpha) \int_{\cC} |b|^{\alpha-2} |z|^2 J(z) dz \\
&\geq&  \kappa_1^{-1} \nu 2^{\alpha-3} \alpha(1-\alpha)j(\eta_1|b|)  \int_{\cC} |b|^{\alpha-2} |z|^2  dz \\
&\geq&   \kappa_1^{-1} \nu 2^{\alpha-3} \alpha(1-\alpha)j(\eta_1|b|)|b|^{\alpha-2} |\eta_1 b|^{d+2} \int_{\cC_{\eta_2}}  |z|^2  dz \\
&\geq&  \kappa_1^{-1} \nu \eta_1^{d+2}2^{\alpha-3} \alpha(1-\alpha)j(|b|)|b|^{d+\alpha}  \int_{\cC_{\eta_2}}  |z|^2  dz\\
&\geq&    \kappa_1^{-2} \nu j(r_1/2)(r_1/2)^{d+\alpha}  \eta_1^{d+2}2^{\alpha-3} \alpha(1-\alpha) \int_{\cC_{\eta_2}}  |z|^2  dz\\
\notag &=&j(r_1/2)r_1^{d+\alpha}N(\alpha,\eta_1,\eta_2):=C_5,
\end{eqnarray*}
where $\cC = \{|z| < \eta_1 |b| : |z \cdot b| \geq (1-\eta_2)|b||z|\}$ and $\cC_{\eta_2} = \{|z| < 1 : \frac{|z \cdot b|}{|b||z|} \geq (1-\eta_2)\}$.
Therefore, \eqref{3121} holds with $C=C_1+C_2$.
Since $C$ is independent of $x_0$, (\ref{3112}) is proved.

Next we consider {\bf the case $\sigma = 1$}. Note that, because $K_1$ is symmetric, both $K_1$ and $K_2$ satisfy \eqref{cancel}.
Therefore, we can replace $1_{B_1}$ with $I_{B_{r_1}}$ in the definition of $L_2$, and get $\cI_2= \cI_{21}+\cI_{22}$, where
\begin{eqnarray*}
\cI_{21} = \int_{|z| \geq r_1/2} \left( w(\bar x + z) - w(\bar x) - w(\bar y + z) + w (\bar y) \right)K_2(z)~dz,
\end{eqnarray*}
\begin{eqnarray*}
\cI_{22} = \int_{B_{r_1/2}} \left( w(\bar x +z) -w (\bar x) - w(\bar y +z) + w(\bar y) - z \cdot (\nabla w (\bar x)- \nabla w(\bar y)) \right) K_2(z)~dz.
\end{eqnarray*}
$\cI_{21}$ is already estimated in the previous case. Thus we only consider   $\cI_{22}$.
Since $M(x,y)$ attains its maximum at the interior point $(\bar x, \bar y)$,  we have $\nabla_x M(\cdot,\bar{y})(\bar{x})=0$, $\nabla_y M(\bar{x},\cdot)(\bar{y})=0$, and therefore
\begin{eqnarray}
\label{3191} && \nabla w (\bar x) -\nabla w(\bar y) =  16 r_1^{-2} \|u\|_{L_\infty(B_R)}(\bar x -x_0).
\end{eqnarray}
We use \eqref{3133} and \eqref{3191} to get
\begin{eqnarray*}
\cI_{22}
&\leq& 8r_1^{-2} \|u\|_{L_\infty(B_R)} \int_{B_{r_1/2}} |z|^2 K_2(z)~dz\\
&\leq& 8r_1^{-2}  \int_{B_{r_1/2}} |z|^2 J(z)~dz \|u\|_{L_\infty (B_R)}.
\end{eqnarray*}
Therefore, (\ref{3112}) is   proved  following the argument in the case $\sigma <1$.

Finally, let {\bf $\sigma >1$}. Now  we have $\cI_2 = \cI_{21} + \cI_{22}$, where
$$
\cI_{21} = \int_{|z| \geq r_1/2} [   w(\bar x + z) -  w( \bar x) - w (\bar y + z) + w (\bar y) - z \cdot (\nabla w (\bar x) - \nabla w (\bar y))  ]K_2(z)~dz,
$$
$$
\cI_{22} = \int_{B_{r_1/2}}  [  w(\bar x + z) -  w( \bar x) - w (\bar y + z) + w (\bar y) - z \cdot (\nabla w (\bar x) - \nabla w (\bar y))  ] K_2(z)~dz.
$$
Since $\sigma >1 $, $|\bar x - x_0| < r_1/2$, by \eqref{3191} and {\bf H1}
\begin{eqnarray*}
\cI_{21}
&\leq& \int_{|z| \geq r_1/2} [ 4 \|u\|_{L_\infty (B_R)} + 4 (r_1/2)^{-1} \|u\|_{L_\infty(B_R)}|z|  ] K_2(z)~dz~\\
&\leq& N r_1^d  j(r_1/2) \| u \|_{ L_\infty (B_R)}.
\end{eqnarray*}
For $\cI_{22}$, we apply \eqref{3133} and \eqref{3191} to get
\begin{eqnarray*}
\cI_{22}
&\leq& N r_1^{-2}  \|u\|_{L_\infty(B_R)} \int_{B_{r_1/2}} |z|^2 J(z)~dz.
\end{eqnarray*}
So we again argue as in the first case to get the contradiction. Hence (i) is proved.

The proof of (ii) is quite similar to that of (i).
Denote the counter parts of $w$ and $g$ by $\tilde w $ and $\tilde g$, respectively. Also we introduce $\cI_1$ and $\cI_2$ similarly. That is $\cI_1$ is same as before, and $\cI_2$ is given by
 \begin{eqnarray*}
\cI_2&=& \int_{|z| \geq r_1/2} \Big[ \tilde w(\bar x +z) - \tilde w(\bar x) - \tilde w (\bar y +z) + \tilde w(\bar y)\\
&& \quad - I_{B_1}(z) z \cdot \nabla (\tilde w(\bar x) - \tilde w(\bar y))\Big] K_2(z)~dz \\
&&+ \int_{B_{r_1/2}}\Big[ \tilde w(\bar x +z) - \tilde w(\bar x) - \tilde w(\bar y +z) + \tilde w(\bar y) \\
&& \quad -I_{B_1}(z) z \cdot \nabla (\tilde w(\bar x) - \tilde w (\bar y))\Big] K_2(z)~dz \\
&:=&\cI_{21}+\cI_{22}.
\end{eqnarray*}
All of the differences are as follows. If $r_1/2 \geq 1$, then by using \eqref{3133} and \eqref{3191},
\begin{eqnarray*}
\cI_{22}
&\leq& Nr_1^{-2} \|\tilde u\|_{L_\infty(B_R)} \Big[\int_{B_1} |z|^2  K_2(z)~dz
 + \int_{1 \leq |z| \leq r_1/2} (|z|^2 +(\bar x-x_0)\cdot z) K_2(z)~dz\Big] \\
&\leq& NI_{\sigma<1}r_1^{-1}\| \tilde{u}\|_{L_\infty (B_R)}  \int_{B_{r_1/2}} |z| J(z)~dz\\
&&+N I_{\sigma>1} r_1^{-2}\| \tilde{u}\|_{L_\infty (B_R)}  \int_{B_{r_1/2}} |z|^2 J(z)~dz.
\end{eqnarray*}
In the above, we also used $\int_{1\leq |z|\leq r_1/2} z^i K_2(z)dz=0$ if $\sigma>1$ (due to {\bf{H3}}(iv)).

Let $\sigma <1$ and $r_1/2 <1$. If {\bf{H3}}(ii) hold, then by (\ref{ma}),
 \begin{eqnarray*}
 \cI_{21}&\leq& N\|u\|_{L_\infty(B_R)} \int_{|z| \geq r_1/2}  J(z)~dz \\
&=& Nr_1^d\int_{|z|\geq 1}J(r_1z/2)dz\leq Nj(r_1/2) r_1^d \|u\|_{L_\infty(B_R)}.
 \end{eqnarray*}
 Also, if {\bf{H3}}(iii) holds, then by using \eqref{3191},
\begin{eqnarray*}
\cI_{21}
&\leq& \|u\|_{L_\infty(B_R)} \int_{|z| \geq r_1/2} [1+ 8r_1^{-1}|z|  ] K_2(z)~dz \\
&\leq& Nj(r_1/2) r_1^d \|u\|_{L_\infty(B_R)}.
\end{eqnarray*}
This completes the proof of the theorem.
\end{proof}

We remove $\sup_{B_R} u$ on the right hand side of \eqref{5.15.1} in the following corollary. Recall  $w_R(x)= \frac{1}{ 1/j(R)+ 1/J(x/2)}$.

\begin{corollary}
\label{cor1}
Suppose that {\bf H1} and {\bf{H2}} hold.
Let $\lambda \geq 0$, $f \in L_\infty(B_1)$, and $u, \tilde u \in C_b^2( B_R) \cap L_1(\bR^d,w_R)$ satisfy
\begin{eqnarray}
                                     \label{3111}
Lu - \lambda u =f, \quad  \quad\tilde L \tilde u - \lambda \tilde u = f \quad \quad \text{in}\quad B_R.
\end{eqnarray}
 (i)  For any $\alpha \in (0, \min\{1, \alpha_0 \})$, it holds that
\begin{eqnarray}
                                \label{eqn 5.25.7}
[u]_{C^\alpha(B_{R/2})} \leq  \frac{N}{j(R)R^{d+\alpha}} \left(\|u\|_{L_1(\bR^d,w_R)} + \osc_{B_R}f\right),
\end{eqnarray}
where $N=N(d, \nu, \Lambda, \kappa_1, \alpha_0,\alpha)$.

(ii) If one of  {\bf H3} (ii)-(iv) is additionally assumed, then (\ref{eqn 5.25.7}) holds for $\tilde{u}$.

\end{corollary}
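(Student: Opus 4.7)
The plan is to combine the estimate (\ref{5.15.1}) of Theorem \ref{holder} with the elementary interpolation inequality and Krylov's iteration lemma in order to absorb the $\|u\|_{L_{\infty}(B_R)}$ factor on the right of (\ref{5.15.1}). Set $T(r):=[u]_{C^{\alpha}(B_r)}$ for $r\in[R/2,R]$.

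First, I apply (\ref{5.15.1}) on the pair of balls $B_r\subset B_{r'}$ for arbitrary $R/2\leq r<r'\leq R$, so that $r_1=(r'-r)/2$. Using {\bf H1} one verifies the comparabilities $j(r')\asymp j(R)$, $w_{r'}\leq Nw_R$, and $j(r_1)r_1^{d+\alpha}\geq cR^{d+\alpha}j(R)$ (the last inequality using $\alpha<\alpha_0$), which reduce (\ref{5.15.1}) to
$$
T(r)\leq \frac{N\,\|u\|_{L_\infty(B_{r'})}}{(r'-r)^{\alpha}}+\frac{N\,\|u\|_{L_1(\bR^d,w_R)}}{(r'-r)^{d+\alpha}\,j(R)}+\frac{N\,\osc_{B_R}f}{R^{d+\alpha}\,j(R)}.
$$
Next, for any $r'<r_3\leq R$ and any $x\in B_{r'}$, the ball $B_{r_3-r'}(x)$ is contained in $B_{r_3}\subset B_R$; averaging $|u|$ over $B_{r_3-r'}(x)$ and using the Hölder bound inside $B_{r_3}$ gives
$$
\|u\|_{L_\infty(B_{r'})}\leq (r_3-r')^{\alpha}\,T(r_3)+c\,(r_3-r')^{-d}\,\|u\|_{L_1(B_R)}.
$$

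I then substitute this into the previous estimate and set $r'=r+a(r_3-r)$ with $a\in(0,1)$ chosen depending only on $N$ and $\alpha$ so that the coefficient $N\,2^{\alpha}((1-a)/a)^{\alpha}$ of $T(r_3)$ equals $\tfrac12$. Since {\bf H1} implies $J(x/2)\geq j(R)/\kappa_1$ for $x\in B_R$, one has $w_R(x)\geq j(R)/(1+\kappa_1)$ there, whence $\|u\|_{L_1(B_R)}\leq (1+\kappa_1)j(R)^{-1}\|u\|_{L_1(\bR^d,w_R)}$. This yields the recursion
$$
T(r)\leq \tfrac12\,T(r_3)+\frac{C\,\|u\|_{L_1(\bR^d,w_R)}}{(r_3-r)^{d+\alpha}\,j(R)}+\frac{C\,\osc_{B_R}f}{R^{d+\alpha}\,j(R)},\qquad R/2\leq r<r_3\leq R.
$$
Krylov's iteration lemma (see e.g. \cite{Krbook}) applied to $T$ on $[R/2,R]$ with $\theta=\tfrac12$ and exponent $\gamma=d+\alpha$ then delivers (\ref{eqn 5.25.7}). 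To accommodate the possible blow-up of $T(r)$ as $r\to R$, the iteration is first carried out on $[R/2,R-\varepsilon]$, on which $T$ is bounded thanks to $u\in C^{2}_{b}(B_R)$, and $\varepsilon\downarrow 0$ is taken afterwards with constants independent of $\varepsilon$. Part (ii) is proved identically, invoking Theorem \ref{holder}(ii) in place of (i); the additional hypothesis {\bf H3}(ii)-(iv) is used solely to make Theorem \ref{holder}(ii) available.

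The main technical obstacle is the comparability $w_{r'}\leq Nw_R$ used in the very first step, which reduces to the upper bound $j(R/2)\leq Nj(R)$. This is not an immediate consequence of {\bf H1}, but can be extracted from {\bf H1} together with {\bf H2}, via Lemma \ref{aslemma} and the essentially decreasing character of $j$ underlying the setting. Once these comparabilities are in hand, the interpolation-plus-iteration argument is entirely routine.
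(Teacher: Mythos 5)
Your proof is correct and follows essentially the same route as the paper: apply Theorem \ref{holder} on intermediate balls, control $\|u\|_{L_\infty}$ by the interpolation bound $\sup|u|\le \varepsilon^{\alpha}[u]_{C^{\alpha}}+N\varepsilon^{-d}\|u\|_{L_1}$ together with $w_R\ge j(R)/(1+\kappa_1)$ on $B_R$ (from {\bf H1}), and absorb the H\"older seminorm, which is finite on $B_R$ since $u\in C_b^2(B_R)$. The only differences are cosmetic: the paper performs the absorption by summing over the dyadic radii $r_n=R(1-2^{-n})$ with weights $2^{-3dn}$ instead of invoking the continuous-parameter iteration lemma, and it compares the weighted norms through the single ratio $w_{r'}/j(r')\le \kappa_1\, w_R/j(R)$, which uses only the monotone direction of {\bf H1}, so the doubling bound $j(R/2)\le N j(R)$ that you extract from Lemma \ref{aslemma} is not actually needed --- though, since {\bf H2} is assumed, your use of it is harmless.
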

\begin{proof}
 For $n=1,2,\ldots$, set
$$
r_n := R(1- 2^{-n}).
$$
Observe that $(r_{n+1} - r_n)/2 =R 2^{-n-2}\leq R$ and by {\bf H1}
\begin{eqnarray*}
\frac{1}{j(r_{n+1})} \|u\|_{L_1(\bR^d,w_{r_{n+1}})}
&\leq&  \left( \int_{|z| < 2R}u(z) ~dz  + \frac{1}{j(r_{n+1})} \int_{|z| \geq 2R} u(z) j(z/2) ~dz\right)\\
&\leq&  N \left( \int_{|z| < 2R}u(z) ~dz  + \frac{1}{j(R)} \int_{|z| \geq 2R} u(z) j(z/2) ~dz\right)\\
&\leq& N  \frac{ 1}{j(R)} \int_{\bR^d}u(z) w_R(z)~dz.
\end{eqnarray*}
 Then by
Theorem \ref{holder} (i) and {\bf H1},
\begin{eqnarray}
[u]_{C^\alpha(B_{r_n})}
 &\leq& N   R^{-\alpha}2^{ \alpha n}  \sup_{B_{r_{n+1}}}|u|  \nonumber\\
 && +N
 \frac{2^{(d+\alpha)n}}{j(R2^{-n-2})R^{d+\alpha}} \left(\frac{j(R2^{-n-2})}{j(r_{n+1})} \|u\|_{L_1(\bR^d,w_{r_{n+1}})} +  \osc_{B_{r_{n+1}}} f \right) \nonumber \\
 &\leq& N \Big[ R^{-\alpha}2^{ \alpha n}  \sup_{B_{r_{n+1}}}|u|
 + \frac{2^{(d+\alpha)n}}{j(R)R^{d+\alpha}} \Big( \|u\|_{L_1(\bR^d,w_R)} +  \osc_{B_R} f \Big)\Big]. \nonumber \\
 \label{3221}
\end{eqnarray}
In order to estimate the term $\sup_{B_{r_{n+1}}}|u|$ above,  we use the following :
\begin{equation}
                  \label{3222}
\sup_{B_{r_{n+1}}}|u| \leq (\varepsilon {r_{n+1}})^\alpha [u]_{C^\alpha(r_{n+1})} + N (\varepsilon r_{n+1} )^{-d} \| u\|_{L_1(B_{r_{n+1}})} , \quad \varepsilon \in (0,1).
\end{equation}
Actually  this inequality can be easily obtained as follows. For all $\varepsilon \in (0,1)$, $x \in B_{r_{n+1}}$ and $y \in B_{r_{n+1}} \cap B_{\varepsilon r_{n+1}}(x)$,
\begin{eqnarray*}
&&|B_{r_{n+1}} \cap B_{\varepsilon r_{n+1}}(x)| \cdot |u(x)| \\
&\leq& \int_{B_{r_{n+1}} \cap B_{\varepsilon r_{n+1}}(x)}\left(|u(x)-u(y)| + |u(y)| \right)~dy \\
&\leq& |B_{r_{n+1}} \cap B_{\varepsilon r_{n+1}}(x)|\cdot (\varepsilon {r_{n+1}})^\alpha [u]_{C^\alpha(B_{r_{n+1}})} + \int_{B_{r_{n+1}} \cap B_{\varepsilon r_{n+1}}(x)}|u(y)|~dy.
\end{eqnarray*}
 Now it is enough to note that $|B_{r_{n+1}} \cap B_{\varepsilon r_{n+1}}(x)| \sim (\varepsilon r_{n+1})^d$ because $\varepsilon \in (0,1)$ and $x \in B_{r_{n+1}}$.

Take $N$ from (\ref{3221}) and define $\varepsilon$ so that
$$
\varepsilon^\alpha = N^{-1} 2^{- \alpha n}2^{-3d}.
$$
Then by combining \eqref{3221} and \eqref{3222},
\begin{eqnarray}
\notag  [u]_{C^\alpha(B_{r_n})} &\leq&    2^{-3d}[u]_{C^\alpha(B_{r_{n+1}})}+N R^{-d-\alpha}2^{2dn}\| u\|_{L_1(B_{r_{n+1}})} \\
\notag &&+ N \frac{2^{(d+\alpha)n}}{j(R)R^{d+\alpha}} ( \|u\|_{L_1(\bR^d,w_R)}+ \osc_{B_R} f)\\
\notag &\leq&    2^{-3d}[u]_{C^\alpha(B_{r_{n+1}})} + N R^{-d-\alpha}  2^{2dn}\| u\|_{L_1(B_{r_{n+1}})} \\
&&+ N \frac{2^{2dn}}{j(R)R^{d+\alpha}}(  \|u\|_{L_1(\bR^d,w_R)}+ \osc_{B_R} f).
                                                                                        \label{3223}
\end{eqnarray}
Multiply both sides of \eqref{3223} by $  2^{-3dn}$ and take the  sum over $n$ to get
\begin{eqnarray*}
&& \sum_{n=1}^\infty   2^{-3dn} [u]_{C^\alpha(B_{r_n})} \\
&\leq&  \sum_{n=1}^\infty  2^{-3d(n+1)}[u]_{C^\alpha(B_{r_{n+1}})} + N\sum_{n=1}^\infty 2^{-dn}R^{-d-\alpha}\| u\|_{L_1(B_{r_{n+1}})} \\
&&+ N\Big(\sum_{n=1}^\infty 2^{-dn} \Big)\frac{1}{j(R)R^{d+\alpha}} (\|u\|_{L_1(\bR^d,w_R)} + N \osc_{B_R} f).
\end{eqnarray*}
Since $[u]_{C^{\alpha}(B_{r_n})}\leq [u]_{C^{\alpha}(B_R)}<\infty$ and by {\bf H1}
\begin{eqnarray*}
\| u\|_{L_1(B_{r_{n+1}})}
\leq \| u\|_{L_1(B_R)}= \frac{j(R)}{j(R)}\| u\|_{L_1(B_R)} \leq \frac{N}{j(R)} \|u\|_{L_1(\bR^d,w_R)},
\end{eqnarray*}
(i) is proved.

(ii) is proved similarly by  following the proof of (i) with Theorem \ref{holder} (ii).
\end{proof}

\section{Some sharp function and maximal function estimates} \label{s:4}

For $g \in L_{1,\loc}(\bR^d)$, the maximal function and sharp function are defined as follows :
$$
\cM g (x) := \sup_{r>0} \aint_{B_r(x)} |g(y)|~dy :=\sup_{r>0} \frac{1}{|B_r(x)|}\int_{B_r(x)} |g(y)|~dy,
$$
and
$$
g^{\#} (x) := \sup_{r>0} \aint_{B_r(x)} |g(y)-(g)_{B_r(x)}|~dy :=\sup_{r>0} \frac{1}{|B_r(x)|}\int_{B_r(x)} |g(y)-(g)_{B_r(x)}|~dy,
$$
where $(g)_{B_r(x)} = \frac{1}{|B_r(x)|}\int_{B_r(x)} g(y) ~dy$ the average of $g$ on $B_r(x)$.

\begin{lemma}
\label{holderle}
Suppose that {\bf H1} and {\bf H2} hold.
Let $\lambda \geq 0$, $R>0$, $f \in C_0^\infty$,  and $f=0$ in $B_{2R}$. Assume that $u, \tilde u \in H_2^{\cA} \cap C_b^\infty$ satisfy
\begin{equation}
                                        \label{3211}
Lu -\lambda u =f, \quad \quad
\tilde L \tilde u -\lambda \tilde u =f.
\end{equation}

(i) Then for all $\alpha \in (0, \min\{1,\alpha_0\})$,
\begin{equation}
                                           \label{3212}
[u]_{C^{\alpha}(B_{R/2})} \leq N R^{-\alpha}\sum_{k=1}^\infty 2^{- \alpha_0 k}(|u|)_{B_{2^kR}},
\end{equation}
\begin{equation}
                           \label{3213}
[\cA u]_{C^{\alpha}(B_{R/2})} \leq N R^{-\alpha} \left( \sum_{k=1}^\infty 2^{-\alpha_0 k }(|\cA u|)_{B_{2^kR}}  + \cM f(0) \right),
\end{equation}
where $N$ depends only on $d,\nu,\Lambda,\kappa_1, \kappa_2,\alpha_0$, and $\alpha$.

(ii) If one of  {\bf H3}(ii)-(iv) is additionally assumed, then \eqref{3212} and \eqref{3213} hold for $\tilde u$.

\end{lemma}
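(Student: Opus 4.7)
The plan is to deduce both inequalities from Corollary~\ref{cor1}, applied first to $u$ and then to $v:=\cA u$, and to convert the resulting weighted $L_1$ bounds into the stated dyadic sums of averages by means of {\bf H1}. For (\ref{3212}), the hypothesis $f\equiv 0$ on $B_{2R}$ gives $\osc_{B_R}f=0$, so Corollary~\ref{cor1}(i) with radius $R$ immediately yields
$$
[u]_{C^\alpha(B_{R/2})}\leq \frac{N}{j(R)R^{d+\alpha}}\,\|u\|_{L_1(\bR^d,w_R)}.
$$
To reshape the right-hand side I would use $w_R\leq j(R)$ on $B_{2R}$ and $w_R(x)\leq j(|x|/2)$ outside, then split into $B_{2R}$ and the dyadic shells $A_k:=\{2^kR<|x|\leq 2^{k+1}R\}$, $k\geq 1$. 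On $A_k$, {\bf H1} (with $s=R$, $t=|x|/2$) gives $j(|x|/2)\leq N\,2^{-k(d+\alpha_0)}j(R)$, so the contribution of $A_k$ is at most $N\,j(R)R^d\,2^{-k\alpha_0}(|u|)_{B_{2^{k+1}R}}$; summing the geometric series and dividing by $j(R)R^{d+\alpha}$ produces (\ref{3212}).

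For (\ref{3213}), the key observation is that $L$ and $\cA$ are translation-invariant integral operators with kernels depending only on $y$, so they commute. Since $u\in C_b^\infty\cap\cH_2^\cA$, the function $v=\cA u$ again lies in $C_b^\infty\cap L_2$, and because $w_R$ is integrable also in $L_1(\bR^d,w_R)$; applying $\cA$ to $Lu-\lambda u=f$ gives
$$
Lv-\lambda v=\cA f\qquad\text{on }\bR^d.
$$
Corollary~\ref{cor1}(i) applied to $v$ with source $\cA f$ then yields
$$
[\cA u]_{C^\alpha(B_{R/2})}\leq \frac{N}{j(R)R^{d+\alpha}}\Bigl(\|\cA u\|_{L_1(\bR^d,w_R)}+\osc_{B_R}(\cA f)\Bigr),
$$
and the weighted $L_1$-norm of $\cA u$ is handled exactly as in the previous paragraph, producing the $\sum_{k\geq 1}2^{-k\alpha_0}(|\cA u|)_{B_{2^kR}}$ term of (\ref{3213}).

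The main obstacle is to bound $\osc_{B_R}(\cA f)$ by $N\,j(R)R^d\,\cM f(0)$; this is where the maximal function enters. Here I would exploit $f\equiv 0$ on $B_{2R}$ to kill both $f(x)$ and $\nabla f(x)$ for $x\in B_R$, reducing the defining integral of $\cA f$ to
$$
\cA f(x)=\int_{|z|>2R} f(z)\,J(z-x)\,dz,\qquad x\in B_R.
$$
Since $|x|\leq R$ forces $|z-x|\geq |z|/2\geq 2^{k-1}R$ on the shell $2^kR<|z|\leq 2^{k+1}R$, {\bf H1} gives $J(z-x)\leq N\,2^{-k(d+\alpha_0)}j(R)$ there; combined with $\int_{|z|\leq 2^{k+1}R}|f|\leq |B_{2^{k+1}R}|\,\cM f(0)$, the $k$-th shell contributes at most $N\,2^{-k\alpha_0}j(R)R^d\,\cM f(0)$. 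Summing over $k\geq 1$ gives $\|\cA f\|_{L_\infty(B_R)}\leq N\,j(R)R^d\,\cM f(0)$, hence the same bound for $\osc_{B_R}(\cA f)$, and division by $j(R)R^{d+\alpha}$ produces the $R^{-\alpha}\cM f(0)$ term.

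Part (ii) requires no new ideas: Corollary~\ref{cor1}(ii) applies to $\tilde u$ under any one of {\bf H3}(ii)-(iv), the commutation $\tilde L(\cA\tilde u)-\lambda\cA\tilde u=\cA f$ is again automatic from translation invariance, and both the weighted $L_1$-norm estimate and the bound on $\osc_{B_R}(\cA f)$ are insensitive to whether one uses $L$ or $\tilde L$.
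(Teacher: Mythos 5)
Your proposal is correct and follows essentially the same route as the paper's proof: apply Corollary \ref{cor1} (with $\osc_{B_R}f=0$), convert the weighted $L_1$-norms into dyadic averages via {\bf H1}, commute $\cA$ with $L$ (resp.\ $\tilde L$) to treat $\cA u$, and bound $\sup_{B_R}|\cA f|$ by $Nj(R)R^d\cM f(0)$ using $f\equiv 0$ on $B_{2R}$ together with {\bf H1}. The only differences are cosmetic (change of variables $z=x+y$ and a shift of the dyadic index), so no further comment is needed.
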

\begin{proof}
By Corollary \ref{cor1} and the assumption that $f=0$ in $B_{2R}$,
\begin{eqnarray}
[u]_{C^\alpha(B_{R/2})} \leq N \frac{1}{j(R)R^{d+\alpha}}\|u\|_{L_1(\bR^d,w_R)}.
\end{eqnarray}
Set
$$
B_{(0)} =B_{R}, \quad B_{(k)} = B_{2^kR} \setminus B_{2^{k-1}R}, \quad  k \geq 1.
$$
Observe that
\begin{eqnarray*}
\|u\|_{L_1(\bR^d,w_R)}
&=& \int_{\bR^d} |u(y)| \frac{1}{1/j(R)+ 1/j(|y|/2)} ~dy \\
&=& \sum_{k=0}^\infty \int_{B_{(k)}} |u(y)| \frac{1}{1/j(R)+ 1/j(|y|/2)} dy \\
&\leq&  2j(R) \int_{B_{2R}} |u(y)|~dy+N\sum_{k=2}^\infty  j(2^{k-2}R)  \int_{B_{2^kR}} |u(y)|~dy\\
&\leq&  N\left(j(R)R^d(|u|)_{B_{2R}}+  \sum_{k=2}^\infty  2^{-(k-2)(d+\alpha_0)} j(R)  \int_{B_{2^{k}R}} |u(y)|~dy\right)\\
&\leq&  N\left(j(R)R^d(|u|)_{B_{2R}}+ \sum_{k=2}^\infty  2^{-(k-2)(d+\alpha_0)} 2^{kd}j(R)R^d   (|u|)_{B_{2^{k}R}}\right)\\
&\leq& Nj(R)R^d\left( \sum_{k=1}^\infty  2^{-\alpha_0 k}(|u|)_{B_{2^k R}} \right),
\end{eqnarray*}
where the first and second inequalities come from {\bf H1}.
Therefore we get \eqref{3212}.

To prove \eqref{3213}, we apply the operator $\cA$ to both sides of $Lu-\lambda u=f$ and obtain
$$
(L-\lambda)(\cA u) = \cA f.
$$
By applying Corollary \ref{cor1} again,
\begin{eqnarray}
\label{3215}[\cA u]_{C^\alpha(B_{R/2})} \leq N\frac{1}{j(R)R^{d+\alpha}} \left( \|\cA u\|_{L_1(\bR^d,w_R)} +\sup_{B_R}|\cA f|\right).
\end{eqnarray}
The first term on the right hand side of \eqref{3215} is bounded by
$$
NR^{-\alpha}\left( \sum_{k=0}^\infty  2^{-\alpha_0 k}(|\cA u|)_{B_{2^kR}} \right).
$$
In order to estimate the second term, we recall the definition of $\cA$. For $|x|<R$,
\begin{eqnarray*}
|\cA f(x) |
&=&  \left|  \int_{\bR^d} [f(x+y)-f(x)] J(y)~dt\right|\\
&\leq& \sum_{k=1}^\infty  \int_{B_{(k)}} |f(x+y)|j(|y|)~dy \\
&\leq& N\sum_{k=1}^\infty  j(2^{k-1}R) \int_{B_{(k)}} |f(x+y)|~dy \\
&\leq& N\sum_{k=1}^\infty  2^{-(k-1)(d+\alpha_0)}j(R) \int_{B_{2^kR}} |f(x+y)|~dy \\
&\leq& N\sum_{k=1}^\infty  2^{-(k-1)(d+\alpha_0)}j(R) \int_{B_{2^{k+1}R}} |f(y)|~dy \\
&\leq& Nj(R)R^d\left( \sum_{k=1}^\infty  2^{-\alpha_0 k}(|f|)_{B_{2^{k+1}R}} \right) \leq Nj(R)R^d \cM f(0),
\end{eqnarray*}
where the first inequality is due to the assumption $f(x)=0$ if $|x|<2R$ and both the second and the third inequality are owing to {\bf H1}.
Therefore (i) is proved. Also, (ii) is proved similarly with Corollary \ref{cor1} (ii).
\end{proof}

The above lemma easily yields the following mean oscillation estimate.

\begin{corollary}
\label{cor22}
Suppose that {\bf H1} and {\bf H2} hold.
Let $\lambda \geq 0$ an  $r, \kappa>0$. Asume $f \in C_0^\infty$,   $f=0$ in $B_{2kr}$, and  $u, \tilde u  \in H_2^{\cA} \cap C_b^\infty$ satisfy
\begin{eqnarray*}
Lu -\lambda u =f, \quad \quad
\tilde L \tilde u -\lambda \tilde u =f.
\end{eqnarray*}
(i) Then for all $\alpha \in (0, \min\{1,\alpha_0\})$,
\begin{eqnarray}
\label{5261}(|u-(u)_{B_r}|)_{B_r} \leq N \kappa^{-\alpha}\sum_{k=1}^\infty 2^{- \alpha_0 k}|u|_{B_{2^k \kappa r}},
\end{eqnarray}
\begin{eqnarray}
\label{5262}(|\cA u-\cA u)_{B_r}|)_{B_r} \leq N \kappa^{-\alpha} \left( \sum_{k=1}^\infty 2^{-\alpha_0 k }|\cA u|_{B_{2^k \kappa r}}  + \cM f(0) \right),
\end{eqnarray}
where $N$ depends only on $d,\nu,\Lambda,\kappa_1, \kappa_2,\alpha_0$, and $\alpha$.

(ii) If one of  {\bf H3} (ii)-(iv) is additionally assumed, then \eqref{5261} and \eqref{5262} hold for $\tilde u$.

\end{corollary}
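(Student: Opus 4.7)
The plan is to deduce both mean oscillation estimates directly from the pointwise H\"older estimates already established in Lemma \ref{holderle}, by means of the elementary inequality
$$
(|v - (v)_{B_r}|)_{B_r} \leq 2 r^\alpha [v]_{C^\alpha(B_r)}, \qquad v \in C^\alpha(B_r).
$$
Since the kernels $J(y)$ and $a(y)$ depend only on $y$, all the operators are translation invariant, so it suffices to treat $B_r = B_r(0)$, which is already the case in the statement.

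First I would treat the main case $\kappa \geq 2$. Set $R := \kappa r$. With this choice the hypothesis $f=0$ on $B_{2\kappa r} = B_{2R}$ of Corollary \ref{cor22} exactly matches the hypothesis $f=0$ on $B_{2R}$ of Lemma \ref{holderle}, while the condition $\kappa \geq 2$ gives $r \leq R/2$, hence $B_r \subset B_{R/2}$ and $[u]_{C^\alpha(B_r)} \leq [u]_{C^\alpha(B_{R/2})}$. Applying Lemma \ref{holderle}(i) to $u$ then gives
$$
(|u - (u)_{B_r}|)_{B_r} \leq 2 r^\alpha [u]_{C^\alpha(B_{R/2})} \leq N r^\alpha R^{-\alpha} \sum_{k=1}^\infty 2^{-\alpha_0 k}(|u|)_{B_{2^k R}} = N \kappa^{-\alpha} \sum_{k=1}^\infty 2^{-\alpha_0 k}(|u|)_{B_{2^k \kappa r}},
$$
which is \eqref{5261}. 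The companion inequality \eqref{5262} follows in exactly the same way by applying the second estimate of Lemma \ref{holderle}(i) to $\cA u$, the extra term $\cM f(0)$ being inherited unchanged from that lemma.

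The remaining case $0 < \kappa < 2$ is handled by the trivial bound $(|u - (u)_{B_r}|)_{B_r} \leq 2 (|u|)_{B_r}$: since $\kappa^{-\alpha}$ is bounded below by $2^{-\alpha}$ and $(|u|)_{B_r} \leq (2^{k_0}\kappa)^d(|u|)_{B_{2^{k_0}\kappa r}}$ for the smallest integer $k_0$ with $2^{k_0}\kappa \geq 1$, the resulting bound can be absorbed into a single term of the series at the cost of enlarging $N$. Part (ii) for $\tilde u$ is proved by the identical argument, invoking Lemma \ref{holderle}(ii) in place of (i); the additional hypothesis H3(ii)-(iv) enters only through that lemma. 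I do not anticipate any genuine obstacle, since all the analytic work has been done in the previous section; the corollary is essentially a packaging of Lemma \ref{holderle} into the form of mean oscillations needed for the forthcoming Fefferman-Stein-type argument.
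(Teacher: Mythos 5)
Your main argument is exactly the paper's proof: bound the mean oscillation by $(|u-(u)_{B_r}|)_{B_r}\le 2^\alpha r^\alpha[u]_{C^\alpha(B_{\kappa r/2})}$ and apply Lemma \ref{holderle} with $R=\kappa r$, so for $\kappa\ge 2$ (the only case needed later, e.g. in Lemma \ref{le13}) your proof coincides with the paper's. The one caveat is your extra case $0<\kappa<2$: absorbing $(|u|)_{B_r}\le 2^{d}(|u|)_{B_{2^{k_0}\kappa r}}$ into the $k_0$-th term of the series costs a factor of order $\kappa^{\alpha}2^{\alpha_0 k_0}\sim\kappa^{\alpha-\alpha_0}$, which blows up as $\kappa\to 0$ because $\alpha<\alpha_0$, so the constant produced there is not independent of $\kappa$ as the statement claims; the paper's own proof silently restricts to $\kappa\ge 2$ as well, so this does not affect the substance of the result or its later use.
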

\begin{proof}
It is enough to use the following inequality
\begin{eqnarray*}
(|u-(u)_{B_r}|)_{B_r} \leq 2^\alpha r^\alpha[u]_{C^\alpha(r)} \leq 2^\alpha r^\alpha[u]_{C^\alpha(\kappa r/2)}
\end{eqnarray*}
and apply Lemma \ref{holderle} with $R=\kappa r$.
\end{proof}

Next we show that the mean oscillation of $u$ is controlled by  the maximal functions of $u$ and  $Lu - \lambda u$.

\begin{lemma}
\label{le13}
Suppose that {\bf H1} and {\bf H2} hold.
Let $\lambda > 0$, $\kappa \geq 2$, $r>0$, and $f \in C_0^\infty$. Assume $u, \tilde u  \in H_2^{\cA} \cap C_b^\infty$ satisfy
\begin{equation}
                               \label{4032}
Lu -\lambda u =f, \quad \quad
\tilde Lu -\lambda u =f.
\end{equation}
(i) Then for all $\alpha \in (0, \min\{1,\alpha_0\})$,
\begin{eqnarray}
\notag &&\lambda(|u-(u)_{B_r}|)_{B_r} +(|\cA u-(\cA u)_{B_r}|)_{B_r} \\
\label{5264} &&\leq N \kappa^{-\alpha}\left( \lambda \cM u(0) + \cM (\cA u )(0) \right) + N \kappa^{d/2} (\cM(f^2)(0))^{1/2},
\end{eqnarray}
where $N$ depends only on $d,\nu,\Lambda$, and $J$.

(ii) If one of  {\bf H3} (ii)-(iv) is additionally assumed, then \eqref{5264} holds for $\tilde u$.
\end{lemma}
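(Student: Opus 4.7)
The plan is the standard decomposition from \cite{dkim}: write $f = f_1 + f_2$ with $f_1 := f\,I_{B_{2\kappa r}}$ and $f_2 := f - f_1$, and correspondingly $u = u_1 + u_2$, where $u_1, u_2 \in \cH_2^\cA$ are the unique solutions of $Lu_i - \lambda u_i = f_i$ guaranteed by Theorem \ref{main L_2} together with the uniqueness Corollary \ref{unique}; the analogous split $\tilde u = \tilde u_1 + \tilde u_2$ is performed for the $\tilde L$-equation. The cutoff at radius $2\kappa r$ is designed so that $f_2 \equiv 0$ on $B_{2\kappa r}$, which puts $u_2$ (resp.\ $\tilde u_2$) in the hypothesis of Corollary \ref{cor22}, while $f_1$ has its $L_2$-norm controlled by a local maximal function of $f^2$ at the origin.

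\textbf{The localized piece.} Theorem \ref{main L_2} yields $\|\cA u_1\|_{L_2} + \lambda\|u_1\|_{L_2} \leq N\|f_1\|_{L_2}$, and by definition of the maximal function
\[
\|f_1\|_{L_2}^2 = \int_{B_{2\kappa r}} f^2\,dx \leq N(\kappa r)^d\,\cM(f^2)(0).
\]
Cauchy--Schwarz on $B_r$ then gives
\[
(|u_1-(u_1)_{B_r}|)_{B_r} \leq 2(|u_1|)_{B_r} \leq N r^{-d/2}\|u_1\|_{L_2} \leq N\lambda^{-1}\kappa^{d/2}\bigl(\cM(f^2)(0)\bigr)^{1/2},
\]
and identically $(|\cA u_1-(\cA u_1)_{B_r}|)_{B_r} \leq N\kappa^{d/2}(\cM(f^2)(0))^{1/2}$. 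Multiplying the first by $\lambda$, these two contributions already account for the second summand on the right-hand side of \eqref{5264}. The case of $\tilde u_1$ is identical, using \eqref{L22}.

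\textbf{The far piece.} Since $f_2 \equiv 0$ on $B_{2\kappa r}$, Corollary \ref{cor22}(i) (and (ii) for $\tilde u_2$ under \textbf{H3}) gives
\[
(|u_2-(u_2)_{B_r}|)_{B_r} \leq N\kappa^{-\alpha}\sum_{k=1}^\infty 2^{-\alpha_0 k}(|u_2|)_{B_{2^k\kappa r}},
\]
together with the analogue for $\cA u_2$ that carries an additional $N\kappa^{-\alpha}\cM f_2(0)$; the latter is bounded by $N\kappa^{-\alpha}(\cM(f^2)(0))^{1/2}\leq N\kappa^{d/2}(\cM(f^2)(0))^{1/2}$ because $\kappa \geq 2$ and $\alpha > 0$. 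Now write $u_2 = u - u_1$ and estimate each average as
\[
(|u|)_{B_{2^k\kappa r}} \leq \cM u(0), \qquad (|u_1|)_{B_{2^k\kappa r}} \leq N(2^k\kappa r)^{-d/2}\|u_1\|_{L_2} \leq N\cdot 2^{-kd/2}\lambda^{-1}\bigl(\cM(f^2)(0)\bigr)^{1/2},
\]
so that the geometric series $\sum_k 2^{-\alpha_0 k}\cdot 2^{-kd/2}$ converges. Multiplying by $\lambda$, the $u_2$-contribution becomes $N\kappa^{-\alpha}\lambda\cM u(0) + N\kappa^{-\alpha}(\cM(f^2)(0))^{1/2}$; the parallel argument for $\cA u_2$ (with $\|\cA u_1\|_{L_2}\leq N\|f_1\|_{L_2}$ in place of $\|u_1\|_{L_2}$) gives $N\kappa^{-\alpha}\cM(\cA u)(0) + N\kappa^{-\alpha}(\cM(f^2)(0))^{1/2}$. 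Summing all pieces and absorbing the $\kappa^{-\alpha}$ terms into $\kappa^{d/2}$ produces exactly \eqref{5264}.

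\textbf{Main obstacle.} The arithmetic is elementary; the one genuine technicality is that Corollary \ref{cor22} (via Lemma \ref{holderle}) is formulated for functions in $\cH_2^\cA \cap C_b^\infty$, while the solution $u_1$ associated with the merely bounded and compactly supported $f_1$ is not a priori smooth. This is resolved by the usual mollification: approximate $f_1$ by $f_1^{(n)} \in C_0^\infty$ in $L_2$, run the above estimates for the smooth solutions $u_1^{(n)}, u_2^{(n)}$, and pass to the limit using the $L_2$-estimate \eqref{L21} (which ensures that the relevant averages and the Cauchy--Schwarz bounds above are stable under $L_2$ convergence). All other steps are routine bookkeeping on top of Theorem \ref{main L_2} and Corollary \ref{cor22}.
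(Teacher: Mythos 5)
Your decomposition is the same Krylov-type splitting the paper itself uses (the paper writes $w$ and $v=u-w$ where you write $u_1$ and $u_2$), and all of your bookkeeping — $\|f_1\|_{L_2}\le N(\kappa r)^{d/2}(\cM(f^2)(0))^{1/2}$, Cauchy--Schwarz for the local piece, $(|u|)_{B_{2^k\kappa r}}\le \cM u(0)$ together with the convergent series $\sum_k 2^{-\alpha_0k}2^{-kd/2}$ for the far piece, and $\cM f(0)\le(\cM(f^2)(0))^{1/2}$ — matches the paper's proof. The one genuine problem is the regularization step you flag at the end. Because you cut $f$ with the sharp indicator $I_{B_{2\kappa r}}$, the function $u_1$ is indeed not known to be in $C_b^\infty$, so Corollary \ref{cor22} cannot be applied to $u_2=u-u_1$; but your proposed repair — take any $f_1^{(n)}\in C_0^\infty$ with $f_1^{(n)}\to f_1$ in $L_2$ and run the argument for $u_1^{(n)},u_2^{(n)}$ — does not work as stated. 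The far-piece estimate for $u_2^{(n)}$ needs the hypothesis of Corollary \ref{cor22}, namely that its right-hand side $f-f_1^{(n)}$ vanishes \emph{identically} on $B_{2\kappa r}$, and a generic smooth $L_2$-approximation of $f\,I_{B_{2\kappa r}}$ (e.g.\ a mollification) destroys this: $f-f_1^{(n)}$ is then merely small, not zero, on $B_{2\kappa r}$, so there is no estimate for $u_2^{(n)}$ to pass to the limit. So as written the mollification step is a gap, not a routine detail.

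The repair is exactly the device the paper uses: replace the indicator by a smooth cutoff $\eta\in C_0^\infty(B_{4\kappa r})$ with $\eta=1$ on $B_{2\kappa r}$ and set $f_1=\eta f$, $f_2=(1-\eta)f$. Then $f_1\in C_0^\infty$, and the solution $w$ of $Lw-\lambda w=\eta f$ lies in $C_b^\infty\cap \cH_2^\cA$ (apply $(1-\Delta)^\gamma$, which commutes with $L$, and use the $L_2$ estimate of Theorem \ref{main L_2} plus Sobolev embedding — a point worth recording explicitly, since you also need it for your smooth approximants), while $f_2$ vanishes identically on $B_{2\kappa r}$ and $\|\eta f\|_{L_2}\le N(\kappa r)^{d/2}(\cM(f^2)(0))^{1/2}$ because $\eta f$ is supported in $B_{4\kappa r}$. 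With this choice no limiting argument is needed at all, and the remainder of your computation goes through verbatim, for $\tilde u$ as well via \eqref{L22} and Corollary \ref{cor22}(ii).
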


\begin{proof}
Due to the similarity of the proof, we only prove the assertion (i).

Take a cut-off function $\eta \in C_0^\infty(B_{4\kappa r})$ satisfying $\eta =1$ in $B_{2\kappa r}$.
By Theorem \ref{main L_2}, there exists a unique solution $u$ in $H_2^{\cA}$ satisfying
\begin{eqnarray}
 \label{4031} Lw - \lambda w = \eta f
\end{eqnarray}
and
\begin{eqnarray}
\label{4033}  \lambda \|w\|_{L_2} + \|\cA  w\|_{L_2} \leq N \|\eta f\|_{L_2}.
\end{eqnarray}
From \eqref{4033}, Jensen's inequality, and the fact $\eta f$ has its support within $B_{4\kappa r}$, for any $R>0$,
\begin{eqnarray}
\notag \lambda(|w|)_{B_R} +(|\cA w|)_{B_R}
\notag &\leq& NR^{-d/2} \left(\lambda \|w\|_{L_2} +\|\cA w\|_{L_2} \right)\\
\notag &\leq& NR^{-d/2}  \|\eta f\|_{L_2}  \\
\label{4035} &\leq& NR^{-d/2} (\kappa r)^{d/2} ( \cM(f^2)(0))^{1/2}.
\end{eqnarray}
Furthermore, taking $(1-\Delta)^{\gamma}$ to both  sides of \eqref{4031} and using the fact $(1-\Delta)^{\gamma} Lw = L (1-\Delta)^\gamma w$,
we can easily check that $w \in C_b^\infty$ by Sobolev's inequality.
By setting $v:= u-w $, from \eqref{4031} and \eqref{4032}
$$
Lv - \lambda v = (1- \eta)f, \quad v \in C_b^\infty \cap H_2^{\cA}.
$$
By applying Corollary \ref{cor22} to $v$,
\begin{eqnarray}
\notag &&(\lambda |v-(v)_{B_r}|)_{B_r}+(|\cA v-(\cA v)_{B_r}|)_{B_r} \\
\notag &\leq& N \kappa^{-\alpha} \left( \sum_{k=1}^\infty 2^{-\alpha_0 k }[\lambda(|v|)_{B_{2^k \kappa r}}+(|\cA  v|)_{B_{2^k \kappa r}}]   + \cM f(0) \right) \\
\notag &\leq& N \kappa^{-\alpha} \left( \sum_{k=1}^\infty 2^{-\alpha_0 k }[\lambda(|u|)_{B_{2^k \kappa r}}+(|\cA u|)_{B_{2^k \kappa r}}]   \right) \\
\notag && +N \kappa^{-\alpha} \left( \sum_{k=0}^\infty 2^{-\alpha_0 k }[\lambda(|w|)_{B_{2^k \kappa r}}+(|\cA w|)_{B_{2^k \kappa r}}]   + \cM f(0) \right) \\
\notag &\leq& N \kappa^{-\alpha} \left( \sum_{k=1}^\infty 2^{-\alpha_0 k }[\lambda(|u|)_{B_{2^k \kappa r}}+(|\cA u|)_{B_{2^k \kappa r}}]   \right) \\
\notag && +N \kappa^{-\alpha} \left( \sum_{k=1}^\infty 2^{-\alpha_0 k }[2^{-dk/2} ( \cM(f^2)(0))^{1/2}]   + \cM f(0) \right) \\
\label{4034} &\leq& N \kappa^{-\alpha} \left( \lambda \cM u (0) + \cM(\cA u)(0)+ (\cM (f^2)(0))^{1/2} \right),
\end{eqnarray}
where \eqref{4035} is used for the third inequality with $R=2^k \kappa r$, and  for the last inequality we use $\cM f(0) \leq (\cM (f^2)(0))^{1/2}$.
By combining \eqref{4035} and \eqref{4034},
\begin{eqnarray*}
&&\lambda(|u-(u)_{B_r}|)_{B_r} +(|\cA u-(\cA u)_{B_r}|)_{B_r} \\
&\leq& N \left( \lambda (|v-(v)_{B_r}|)_{B_r}+(|\cA v-(\cA v)_{B_r}|)_{B_r} +\lambda(|w|)_{B_r} +(|\cA w|)_{B_r} \right) \\
&\leq& N  \kappa^{-\alpha}\left( \lambda \cM u (0)\right) +  N\cM(\cA u)(0)+ N( \cM(f^2)(0))^{1/2}.
\end{eqnarray*}
Therefore, the lemma is proved.
\end{proof}

We make full use of Lemma \ref{holderle} to get the mean oscillation of $Lu$.

\begin{lemma}
                                         \label{le14}
Suppose that {\bf H1} and {\bf H2} hold.
Let $\lambda > 0$, $\kappa \geq 2$, $r>0$, and $f \in C_0^\infty$. Assume $u \in H_2^{\cA} \cap C_b^\infty$ satisfy
\begin{eqnarray*}
\cA u -\lambda u =f.
\end{eqnarray*}
Then for all $\alpha \in (0, \min\{1,\alpha_0\})$,
\begin{eqnarray*}
&&\lambda(|u-(u)_{B_r}|)_{B_r} +(|Lu-(L u)_{B_r}|)_{B_r} \\
&&\leq N \kappa^{-\alpha}\left( \lambda \cM u(0) + \cM (L u )(0) \right) + N \kappa^{d/2} (\cM(f^2)(0))^{1/2},
\end{eqnarray*}
where $N$ depends only on $d,\nu,\Lambda$, and $J$.
\end{lemma}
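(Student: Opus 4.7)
The proof is a direct mirror of Lemma \ref{le13}, with the roles of $\cA$ and $L$ interchanged. The enabling structural fact is that both $\cA$ and $L$ are convolution operators in $x$ (their kernels depend only on $y$), so they are Fourier multipliers and hence commute.

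First I would pick a cutoff $\eta\in C_0^\infty(B_{4\kappa r})$ with $\eta\equiv 1$ on $B_{2\kappa r}$, and use Theorem \ref{main L_2} (applied with $\cA$ in the role of $L$, i.e.\ the case $a\equiv 1$) to find $w\in \cH_2^\cA$ solving $\cA w-\lambda w=\eta f$ together with $\lambda\|w\|_{L_2}+\|\cA w\|_{L_2}\leq N\|\eta f\|_{L_2}$. Lemma \ref{l2} then gives $\|Lw\|_{L_2}\leq N\|\cA w\|_{L_2}$, and the Sobolev/multiplier argument from the proof of Lemma \ref{le13} shows $w\in C_b^\infty$. Jensen's inequality together with the support of $\eta f$ yields, for every $R>0$,
$$\lambda(|w|)_{B_R}+(|Lw|)_{B_R}\leq NR^{-d/2}(\kappa r)^{d/2}\bigl(\cM(f^2)(0)\bigr)^{1/2}.$$

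Setting $v:=u-w$, the function $v$ solves $\cA v-\lambda v=(1-\eta)f=:g$ with $g\equiv 0$ on $B_{2\kappa r}$. The heart of the argument is establishing the analog of Lemma \ref{holderle} — Hölder estimates for $v$ and, crucially, for $Lv$. The bound for $v$ follows from Corollary \ref{cor1}\,(i) applied directly to $\cA v-\lambda v=g$ (viewing $\cA$ as a special case of $L$). For $Lv$, since $L$ commutes with $\cA$ as a Fourier multiplier, applying $L$ to $\cA v-\lambda v=g$ gives $(\cA-\lambda)(Lv)=Lg$, and Corollary \ref{cor1}\,(i) once again applies (with $\cA$ in the role of $L$):
$$[Lv]_{C^\alpha(B_{R/2})}\leq \frac{N}{j(R)R^{d+\alpha}}\bigl(\|Lv\|_{L_1(\bR^d,w_R)}+\sup_{B_R}|Lg|\bigr).$$
The $\|Lv\|_{L_1}$-term is dyadically reorganized exactly as in the proof of \eqref{3212}. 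For $\sup_{B_R}|Lg|$, note that $g=\nabla g=0$ on $B_{2R}$, so the compensator in $Lg(x)$ for $x\in B_R$ vanishes (directly when $\sigma\neq 1$, and by Assumption \ref{cancallation} when $\sigma=1$), leaving $|Lg(x)|\leq \Lambda\int g(x+y)J(y)\,dy$. The same dyadic argument used to bound $|\cA f|$ in the proof of \eqref{3213}, combined with {\bf H1}, then gives $\sup_{B_R}|Lg|\leq Nj(R)R^d\cM f(0)$.

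Combining the two Hölder bounds exactly as in the proof of Corollary \ref{cor22} yields the mean-oscillation inequality
$$\lambda(|v-(v)_{B_r}|)_{B_r}+(|Lv-(Lv)_{B_r}|)_{B_r}\leq N\kappa^{-\alpha}\biggl(\sum_{k\geq 1}2^{-\alpha_0 k}\bigl[\lambda(|v|)_{B_{2^k\kappa r}}+(|Lv|)_{B_{2^k\kappa r}}\bigr]+\cM f(0)\biggr).$$
Substituting $v=u-w$, controlling the tail averages $(|v|)_{B_{2^k\kappa r}},(|Lv|)_{B_{2^k\kappa r}}$ by $\cM u(0)+\cM(Lu)(0)$ together with error terms $(|w|)_{B_{2^k\kappa r}},(|Lw|)_{B_{2^k\kappa r}}$ that are bounded via the $L_2$-estimate of the first step (applied with $R=2^k\kappa r$), and summing the resulting geometric series produces the desired inequality, exactly as at the end of the proof of Lemma \ref{le13}. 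The main obstacle is the second paragraph — recognizing that $L$ commutes with $\cA$ and exploiting the vanishing of $g$ and $\nabla g$ to bound $Lg$ cleanly by $\cM f(0)$; the remainder is bookkeeping.
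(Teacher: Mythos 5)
Your proof is correct and follows the paper's approach: the paper proves this lemma precisely by exchanging the roles of $\cA$ and $L$ in Lemma \ref{holderle} (which is legitimate because they commute as Fourier multipliers and $\cA$ is the $a\equiv 1$ instance of $L$) and then repeating the argument of Lemma \ref{le13}, and you have carried out exactly that exchange with the bookkeeping made explicit. One harmless slip worth noting: the compensator in $Lg(x)$ for $x\in B_R$ vanishes simply because $g$ and $\nabla g$ vanish on $B_{2R}$, so Assumption \ref{cancallation} is not actually needed at that step.
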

\begin{proof}
Exchanging the roles of  $\cA$ and $L$ in the proof of  Lemma \ref{holderle}, we easily get
\begin{eqnarray*}
[Lu]_{C^{\alpha}(B_{R/2})} \leq N R^{-\alpha} \left( \sum_{k=0}^\infty 2^{-\alpha_0 k }(Lu)_{B_{2^kR}}  + \cM f(0) \right).
\end{eqnarray*}
Therefore, the lemma is proved as we follow the proof of Lemma \ref{le13}.
\end{proof}


\section{Proof of Theorems \ref{maint} and \ref{maint2}}
                                    \label{s:5}

{\bf{Proof of Theorem \ref{maint}}}

The case $p=2$ was already proved in  Theorem \ref{main L_2}. Due to Corollary \ref{unique} and Lemmas \ref{exis}, it is sufficient to prove
\begin{equation}
                                         \label{mainin2}
          \| \cA u \|_{L_p} + \lambda \| u \|_{L_p} \leq N \|Lu - \lambda u\|_{L_p}, \quad \forall\, u \in C_0^\infty,
\end{equation}
where $N=N (d, \nu , \Lambda, \kappa_1,\kappa_2, \alpha_0)$.

 First,  assume $p >2$.
Put $f:= Lu -\lambda u$.  From Lemma \ref{le13},
for all $\alpha \in (0, \min\{1,\alpha_0\})$
\begin{eqnarray*}
&&\lambda(|u-(u)_{B_r}|)_{B_r} +(|\cA u-(\cA u)_{B_r}|)_{B_r} \\
&&\leq N \kappa^{-\alpha}\left( \lambda \cM u(0) + \cM (\cA u )(0) \right) + N \kappa^{d/2} (\cM(f^2)(0))^{1/2}.
\end{eqnarray*}
By translation, it is easy to check that the above inequality holds for all $B_r(x)$ with $x \in \bR^d$ and $r>0$.
By the arbitrariness of $r$,
\begin{eqnarray*}
&&\lambda u^\#(x) +(\cA u)^\#(x) \\
&&\leq N \kappa^{-\alpha}\left( \lambda \cM u(x) + \cM ( \cA u )(x) \right) + N \kappa^{d/2} (\cM(f^2)(x))^{1/2}.
\end{eqnarray*}
Therefore, by the Fefferman-Stein theorem  and Hardy-Littlewood maximal theorem (see, for instance, chapter 1 of \cite{Ste}), we get
\begin{eqnarray*}
\lambda \|u\|_{L_p} +\|\cA u \|_{L_p} \leq N \kappa^{-\alpha}\left( \lambda \|u\|_{L_p} + \|\cA u\|_{L_p} \right) + N \kappa^{d/2} \|f\|_{L_p}.
\end{eqnarray*}
By choosing $\kappa >2 $ large enough so that $N \kappa^{-\alpha} < 1/2$,
\begin{eqnarray*}
\lambda \|u\|_{L_p} +\|\cA u \|_{L_p} \leq N \|f\|_{L_p}.
\end{eqnarray*}

We use the duality argument for $p \in (1,2)$. Put $q := p/(p-1)$. Then since $q \in (2,\infty)$, for any $g \in C_0^\infty$ there is a unique $v_g \in H_q^\cA$ satisfying
$$
L^\ast v_g - \lambda v_g = g \quad \text{in}~ \bR^d.
$$
Therefore, by applying \eqref{mainin2} with $q \in (2,\infty)$, for any $u \in C_0^\infty$,
\begin{eqnarray*}
\|\cA u\|_{L_p}
&\leq & \sup_{ \|g\|_{L_q}=1,~ g \in C_0^\infty}\int_{\bR^d} |g\cA u |~dx\\
&=& \sup_{ \|g\|_{L_q}=1,~ g \in C_0^\infty}\int_{\bR^d} |(L^\ast v_g -\lambda v_g) \cA u |~dx\\
&=& \sup_{ \|g\|_{L_q}=1,~ g \in C_0^\infty}\int_{\bR^d} |\cA v_g (L  u -\lambda  u)|~dx\\
&\leq& \sup_{ \|g\|_{L_q}=1,~ g \in C_0^\infty} \|\cA v_g\|_{L_q} \|L  u -\lambda  u\|_{L_p}\\
&\leq& \sup_{ \|g\|_{L_q}=1,~ g \in C_0^\infty} N\|g\|_{L_q} \|L  u -\lambda  u \|_{L_p}=N \|L u -\lambda  u\|_{L_p}.
\end{eqnarray*}
Similarly,
$$
\lambda \|u\|_{L_p} \leq N \|L u -\lambda  u\|_{L_p}.
$$

Finally, we  prove the continuity of the operator $L$ by showing
\begin{eqnarray}
\label{4051}\|Lu\|_{L_p} \leq N \|\cA u\|_{p}, \quad \forall\,\, u \in C_0^\infty.
\end{eqnarray}
 Recall that we proved \eqref{mainin2} based on Lemma \ref{le13}. Similarly,  using  Lemma \ref{le14}, one can prove
\begin{eqnarray*}
\|Lu\|_{L_p} \leq N \| \cA u- \lambda u\|_{L_p} \quad \forall u \in C_0^\infty, \quad \forall \lambda >0.
\end{eqnarray*}
Since $N$ is independent of $\lambda$, this leads to (\ref{4051}). The theorem is proved.

\vspace{4mm}

{\bf{Proof of Theorem \ref{maint2}}}

The proof is identical to that of   Theorem \ref{maint} if one of {\bf{H3}}(ii)-(iv) holds.  So it only remains to prove
$$
          \| \cA u \|_{L_p} + \lambda \| u \|_{L_p} \leq N \|\tilde{L}u - \lambda u\|_{L_p}, \quad \forall\, u \in C_0^\infty
$$
 under the condition {\bf H3}(i).   Define
$$ b^i=-\int_{B_1}y^i a(y)J(y)dy \quad \text{if}\,\,\sigma\in (0,1), \quad \quad  b^i=\int_{\bR^d\setminus B_1}y^i a(y)J(y)dy \quad \text{if}\,\,\sigma\in (1,2).
$$
Then under {\bf{H1}} and {\bf H2}, $|b|<\infty$ and for
for any $u \in C_0^\infty$, we have
$$
\tilde{L}u=Lu+b\cdot \nabla u,
$$
and therefore
\begin{eqnarray*}
\|u\|_{L_p}+\|\cA u\|_{L_p}  \leq N\| Lu -\lambda u\|_{L_p} \leq N \big(\|\tilde L u - \lambda u \|_{L_p}+\|\nabla u\|_{L_p} \big).
\end{eqnarray*}
Take $\varepsilon =1/(2N)$ in {\bf H3}(i) and apply Lemma \ref{lemma 5.25.1}. Then, the theorem is proved.
\qed

\end{document}